\documentclass[a4paper,reqno,11pt]{amsart}

\usepackage{amssymb}
\usepackage{amsthm}
\usepackage[style=alphabetic, url=false, isbn = false, backend=biber]{biblatex}
\usepackage{tikz-cd}
\usepackage{mathtools}
\usepackage{framed}
\usepackage[mathscr]{eucal}
\usepackage{fullpage}
\usepackage{hyperref}
\usepackage{enumerate}
\usepackage{wasysym}
\usepackage{xcolor}
\usepackage{scalerel}
\usepackage{tikz}
\usetikzlibrary{arrows.meta}

\hypersetup{colorlinks=true,citecolor=blue,urlcolor =black,linkbordercolor={1 0 0}}
\mathtoolsset{centercolon}

\newtheorem{theorem}{Theorem}

\newtheorem{proposition}[theorem]{Proposition}
\newtheorem{lemma}[theorem]{Lemma}
\newtheorem{corollary}[theorem]{Corollary}
\newtheorem{conjecture}[theorem]{Conjecture}

\theoremstyle{definition}
\newtheorem{defn}[theorem]{Definition}

\newtheorem{situation}[theorem]{Situation}
\newtheorem*{warning}{Warning}

\theoremstyle{remark}
\newtheorem*{remark}{Remark}

\newcommand{\on}{\operatorname}
\newcommand{\abs}[1]{\left|#1\right|}

\renewcommand{\tilde}[1]{\widetilde{#1}}

\newcommand{\Db}{\mathrm{D}^{\mathrm{b}}}

\newcommand{\im}{\on{im}}

\newcommand{\coker}{\on{coker}}
\newcommand{\Hom}{\on{Hom}}

\newcommand{\Sym}{\on{Sym}}
\renewcommand{\bar}[1]{\overline{#1}}

\newcommand{\cO}{\mathcal{O}}

\newcommand{\cE}{\mathcal{E}}

\newcommand{\cK}{\mathcal{K}}

\newcommand{\cN}{\mathcal{N}}

\newcommand{\cQ}{\mathcal{Q}}
\newcommand{\cR}{\mathcal{R}}

\newcommand{\cU}{\mathcal{U}}

\newcommand{\cW}{\mathcal{W}}
\newcommand{\cX}{\mathcal{X}}
\newcommand{\cZ}{\mathcal{Z}}

\newcommand{\bZ}{\mathbf{Z}}

\newcommand{\bbC}{\mathbb{C}}

\newcommand{\bbL}{\mathbb{L}}
\newcommand{\bbP}{\mathbb{P}}

\newcommand{\bbZ}{\mathbb{Z}}

\newcommand{\fQ}{\mathfrak{Q}}

\newcommand{\Bl}{\on{Bl}}
\newcommand{\Gr}{\on{Gr}}
\newcommand{\Fl}{\on{Fl}}
\newcommand{\OGr}{\on{OGr}}

\newcommand{\Span}[1]{\langle#1\rangle}
\newcommand{\Pic}{\on{Pic}}

\newcommand{\cl}{\mathcal{C}l}

\newcommand{\longdashleftrightarrow}[1][1.75em]{\mathrel{
		\tikz[baseline]{\draw[dash pattern=on .25em off .1 em,<->](0,.58ex)--(#1,.58ex)}}}

\title{Flips for spaces of quadrics on del Pezzo varieties} 
\author{Saket Shah}

\begin{document}
	\maketitle
	\begin{abstract}
		For a cubic hypersurface $X$, work of Galkin--Shinder and Voisin shows the existence of a birational map relating the Hilbert scheme of two points $X^{[2]}$ with a certain projective bundle over $X$. Belmans--Fu--Raedschelders show that this is a standard flip, a particularly nice type of birational map inducing decompositions of derived categories. \par 
		We show that this geometric construction extends to produce standard flips for Hilbert schemes of quadrics on various higher-dimensional del Pezzo varieties of degree at least 3, including cubics, intersections of two quadrics, and linear sections of $\Gr(2,5)$. The resulting construction also generalizes results of Chung--Hong--Lee for quintic del Pezzo varieties. \par 
		As an application, we produce a conjectural semiorthogonal decompositions for orthogonal Grassmannians of lines. 
	\end{abstract}
	\tableofcontents
	\section{Introduction}
	\subsection*{Conventions}
	We work throughout over the complex field $\bbC$. 
	\subsection{The flips}
	Fix for now an $(n+1)$-dimensional vector space $V$. For a cubic hypersurface $X \subset \bbP(V)$, there is a beautiful construction of a birational map that can be associated to the Hilbert square $X^{[2]}$ of $X$ which has been called the \textit{Galkin--Shinder--Voisin flip} \cite{gsflip, voisin}. \par 
	To see how such a map arises, one observes that the rational map $X^{[2]} \dashrightarrow X$ realizing the classical third-point construction for cubic hypersurfaces lifts to a birational map between $X^{[2]}$ and the projectivization of a certain vector bundle on $X$. This gives rise to the following diagram: 
	\begin{center}
		\begin{tikzcd}
			\bbP(\Sym^2 \cU_2^\vee) \arrow[r, hook, "\iota"] \arrow[d, "\pi"] & X^{\left[2\right]} \arrow[leftrightarrow, r, dashed, "\text{flip}"] & \bbP(\cQ_1) \arrow[d] & \bbP(\cU_2^\vee) \arrow[d] \arrow[l,hook']\\
			F_1(X) & & X & F_1(X)
		\end{tikzcd}
	\end{center}
	In the above diagram:
	\begin{itemize}
		\item $F_1(X)$ denotes the Fano variety of lines on the cubic $X$,
		\item $\cU_2$ on $F_1(X)$ denotes the tautological rank 2 subbundle of $V \otimes \cO$ pulled back from the Grassmannian, 
		\item the embedding $\iota : \bbP(\Sym^2 \cU_2^\vee) \hookrightarrow X^{[2]}$ is simply the inclusion of those length 2 subschemes on $X$ which live on a line in $X$, and 
		\item $\cQ_1$ on $X$ denotes the tautological rank $n$ quotient bundle of $V \otimes \cO$ pulled back from $\bbP(V) = \Gr(1,V)$.
	\end{itemize}
	This birational map turns out to be a \textit{standard flip}. Standard flips are particularly nice instances of flips arising in the MMP, in particular because they are known to imply a motivic relationship between the two birational models. For example, the standard flip above induces a relationship on the level of the Grothendieck ring of varieties which was studied in detail by Galkin and Shinder in \cite{gsflip}. On the other hand, one can show that any standard flip induces a semiorthogonal decomposition of derived categories, and hence one can use the above picture to produce an interesting semiorthogonal decomposition of $\Db(X^{[2]})$ in terms of $\Db(F_1(X))$ and several copies of $\Db(X)$, as in \cite{cubicflip}. \par 
	The first goal of this paper is to produce a twofold generalization of the flips above. The directions along which we will generalize are as follows: 
	\begin{enumerate}
		\item One can enlarge the class of varieties $X$ for which these flips exist to other varieties besides cubic hypersurfaces. The class we will consider will be the class of \textit{del Pezzo varieties}, which have been explored at length in \cite{fujitaI,fujitaII,fujitaIII,kuzdelpezzo}. 
		\item One can interpret $X^{[2]}$ as the space of 0-dimensional quadrics on $X$, and can then consider the analogous Hilbert scheme $G_k(X)$ of $k$-dimensional quadrics on $X$ for any $k \geq 0$. This approach is similar in spirit to that of \cite{GMquadrics}, where similar birational maps were constructed for moduli spaces of quadrics on Gushel--Mukai varieties (which are in turn quadric hypersurface sections of del Pezzo varieties of degree 5). 
	\end{enumerate}
	\par 
	Our first result, which we prove in Section \ref{contractionflipssection}, arises from the first generalization given a restriction on the allowed degrees of the del Pezzo varieties. 
	\begin{theorem}
		\label{delpezzoflip}
		Suppose that $(X,H)$ is an $n$-dimensional del Pezzo variety with $3 \leq d(X) \leq 9$. Then the polarization $H$ is very ample, and there exists a standard flip diagram
		\begin{equation}
			\label{flipdiag}
			\begin{tikzcd}
				\bbP(\Sym^2 \cU_2^\vee) \arrow[r, hook, "\iota"] \arrow[d, "\pi"] & X^{\left[2\right]} \arrow[leftrightarrow, r, dashed, "\text{flip}"] & Y \\
				F_1(X)
			\end{tikzcd}
		\end{equation}
		where $Y$ is a smooth projective variety. In particular, there is always an embedding $\Db(F_1(X)) \hookrightarrow \Db(X^{[2]})$ as a semiorthogonal component. 
	\end{theorem}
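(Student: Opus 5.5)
Very ampleness of $H$ for $d(X)\ge 3$ comes from Fujita's classification of del Pezzo varieties. This gives an embedding $X\subset\bbP(V)$ with $\dim V=d+n-1$, in which $X$ is cut out by quadrics (and by a cubic when $d=3$). Since $X$ is cut out by quadrics for $d\ge4$, and is a cubic for $d=3$, any line meeting $X$ in a length-$2$ subscheme not contained in $X$ meets it in exactly that subscheme when $d\ge 4$. When $d=3$ it meets $X$ in a length-$3$ subscheme.

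To build the flip, I would follow the Galkin--Shinder--Voisin pattern. There is a natural morphism $X^{[2]}\to\Gr(2,V)$ sending $Z$ to its linear span. Its restriction to the locus of $Z$ spanning a line not in $X$ is an isomorphism onto its image (or a finite map, which I would dispense with in the cubic case). Over $F_1(X)$ it is the $\bbP^2$-bundle $\bbP(\Sym^2\cU_2^\vee)$.

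So the plan is:
\begin{enumerate}
\item Compute the normal bundle $N_\iota$ of $\iota$ and show that $N_\iota|_{\text{fibre}}\cong\cO_{\bbP^2}(-1)^{\oplus m}$. Here $m=\operatorname{codim}$ of the exceptional locus. I would do this using the tangent description of $X^{[2]}$ and the normal bundle of a line in $X$: for general lines in a del Pezzo variety, $N_{L/X}\cong\cO^{a}\oplus\cO(1)^{b}$, and one would check that all lines behave uniformly here.
\item Blow up $X^{[2]}$ along $\bbP(\Sym^2\cU_2^\vee)$. The exceptional divisor is $\bbP^2\times\bbP^{m-1}$-bundle over $F_1(X)$ with normal bundle $\cO(-1,-1)$.
\item Contract it along the other ruling (Fujiki--Nakano / Artin) to a smooth $Y$ containing a $\bbP^{m-1}$-bundle over $F_1(X)$.
\end{enumerate}

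To show $Y$ is projective, I would produce it explicitly as the image of the blowup under a line bundle such as $2H-E$, or $H-E$ pulled back via the span map. The contraction is then a morphism to a projective variety. For $d=3$ this recovers $Y=\bbP(\cQ_1)$.

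The semiorthogonal embedding $\Db(F_1(X))\hookrightarrow\Db(X^{[2]})$ then follows from the standard-flip theorem (Bondal--Orlov): $\Db(Y)$ and copies of $\Db(F_1(X))$ appear in $\Db(X^{[2]})$ when $\dim$ of the flipped fibres satisfies $m\ge 3$ on the $Y$ side versus $2$. More precisely, the standard flip gives $\Db(X^{[2]})=\langle \Db(F_1(X))^{\oplus(3-m)}\ \text{copies},\Db(Y)\rangle$ if $m\le 3$, or the reverse. I therefore need $m\le 3$, or at least the relevant inequality, which would be checked via dimension counts: $\dim X^{[2]}=2n$ and $\dim F_1(X)=2n-3-(d\text{-dependent correction})$. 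More carefully, the standard flip induces the embedding of $\Db(F_1)$ on whichever side has the larger projective fibre, and I expect $X^{[2]}$ has the $\bbP^2$ with $m-1\le 1$. In fact, since $\dim F_1(X)=2n-4$ for del Pezzo varieties, $m=2n-(2n-4+2)=2$, so $\bbP^2$ flips to $\bbP^1$ and one copy of $\Db(F_1(X))$ appears.

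The main obstacle is step 1. I need the normal bundle to be \emph{exactly} $\cO(-1)^{2}$ on every fibre, which requires smoothness of $F_1(X)$ of expected dimension $2n-4$ and uniform normal-bundle behaviour of all lines. This is where the degree hypothesis matters: Fujita's list for $3\le d\le 9$ gives explicit models (cubics, $(2,2)$ intersections, $\Gr(2,5)$ sections, $\bbP^2\times\bbP^2$, etc.), which I would check case by case if no uniform argument works.
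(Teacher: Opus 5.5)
Your overall architecture matches the paper's: blow up $X^{[2]}$ along $\bbP(\Sym^2\cU_2^\vee)$, contract the other ruling by Fujiki--Nakano, prove projectivity, then apply the standard-flip decomposition with $r=2$, $s=1$. But the step you flag as the main obstacle is the real content of the theorem, and the route you propose through it would fail. You say that getting $\cO(-1)^{\oplus 2}$ on every fibre requires all lines to have a uniform normal bundle $\cO^{a}\oplus\cO(1)^{b}$, to be checked case by case through Fujita's list. Lines are not uniform. Write $\cN_{\ell/X}\cong\bigoplus_i\cO(a_i)$. The inclusion into $\cN_{\ell/\bbP^N}\cong\cO(1)^{\oplus(N-1)}$ forces $a_i\le 1$, and adjunction gives $\sum_i a_i=\deg(\omega_\ell\otimes\omega_X^\vee|_\ell)=n-3$. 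So $\cN_{\ell/X}$ is either $\cO^{\oplus 2}\oplus\cO(1)^{\oplus(n-3)}$ or $\cO(-1)\oplus\cO(1)^{\oplus(n-2)}$, and the second type genuinely occurs: every line on a del Pezzo surface, and the lines of the second type on a cubic threefold. No uniformity statement is available. This same dichotomy also gives $H^1(\cN_{\ell/X})=0$ and $h^0(\cN_{\ell/X})=2n-4$, which is the smoothness of $F_1(X)$ in dimension $2n-4$ that you take for granted.

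The missing idea is that the two splitting types become indistinguishable on Hilbert squares. The paper proves that $\cN_{A^{[2]}/B^{[2]}}\cong\cN_{A/B}^{[2]}$ for smooth $A\subset B$, using the $\bZ/2$-equivariant sequence on $\Bl_\Delta(B\times B)$. Applied to $\ell\subset X$, this gives $\cN_{\ell^{[2]}/X^{[2]}}\cong\cN_{\ell/X}^{[2]}$. On $\ell^{[2]}\cong\bbP^2$ it then computes, via K\"unneth on $\bbP^1\times\bbP^1$ and Horrocks' criterion, that $\cO(-1)^{[2]}\cong\cO(-1)^{\oplus2}$, $\cO^{[2]}\cong\cO\oplus\cO(-1)$ and $\cO(1)^{[2]}\cong\cO^{\oplus 2}$. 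Since $\cO^{[2]}\oplus\cO^{[2]}\cong\cO(-1)^{[2]}\oplus\cO(1)^{[2]}$, both types give $\cN_{\ell^{[2]}/X^{[2]}}\cong\cO(-1)^{\oplus2}\oplus\cO^{\oplus(2n-4)}$. Dividing by the trivial normal bundle of the fibre $\ell^{[2]}$ inside $\bbP(\Sym^2\cU_2^\vee)$ leaves exactly $\cO(-1)^{\oplus 2}$. Your ``tangent description of $X^{[2]}$'' would have to supply precisely these ingredients.

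There is also a secondary gap in your projectivity step, ``the image under a line bundle such as $2H-E$ or $H-E$.'' You never show that any such bundle is semiample with the right null locus. The paper avoids this by taking the Stein factorization of the span map $X^{[2]}\to\Gr(2,V)$ as a projective contraction of $\bbP(\Sym^2\cU_2^\vee)$ onto $F_1(X)$; this also absorbs the generically $3$-to-$1$ behaviour for cubics. It then shows, in Proposition~\ref{contractionprop}, that the lines in the $\bbP^2$-directions of the exceptional divisor span a $K$-negative extremal ray, so Mori's contraction theorem yields a projective $Y$.
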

	Then our second theorem studies the spaces of higher-dimensional quadrics $G_k(X)$ where $X$ is a del Pezzo variety of degree 3, 4, or 5, which are the prime cases of interest. One should interpret this as giving a geometric construction for the standard flip above. We state the theorem in broad terms first, deferring more precise statements and their proofs to Section \ref{geomflipsection} in the paper. 
	
	\begin{theorem}[Theorems \ref{geomflipcubics}, \ref{geomflipquartic} and \ref{geomflipquintic}]\label{geomflipthm}
		Let $X$ be a smooth del Pezzo variety of dimension $n$, and consider the Hilbert scheme $G_k(X)$ of $k$-dimensional quadrics on $X$. Then under the appropriate numerical conditions on $n$ and $k$, we have the following flips: 
		\begin{enumerate}[(a)]
			\item Suppose $X \subset \bbP^{n+1}$ is generic smooth cubic hypersurface, and let $\cQ_{k+1}$ denote the tautological quotient bundle of rank $(n+2) - (k+1)$ on $F_k(X)$. Then we have a standard flip: 
			\begin{equation}
				\begin{tikzcd}
					G_k(X) \arrow[leftrightarrow, r, dashed, "\text{flip}"] & \bbP(\cQ_{k+1}) \arrow[d] \\
					& F_k(X) 
				\end{tikzcd}
			\end{equation}
			\item Suppose $X = Q_1 \cap Q_2 \subset \bbP^{n+2}$ is any smooth complete intersection of two quadric hypersurfaces. Let $\fQ$ denote the total space of the quadric fibration for the pencil of quadrics $\langle Q_1, Q_2 \rangle$, and write $\OGr(k+2,\fQ)$ for the relative orthogonal Grassmannian of isotropic $(k+2)$-dimensional subspaces of $V$ for the quadric fibration. Then we have a standard flip 
			\begin{equation}
				\begin{tikzcd}
					G_k(X) \arrow[leftrightarrow, r, dashed, "\text{flip}"] & \OGr(k+2,\fQ) \arrow[d] \\
					&\langle Q_1, Q_2 \rangle = \bbP^1 
				\end{tikzcd}
			\end{equation}
			\item Let $V_5$ be a 5-dimensional vector space, and suppose $X = \Gr(2,V_5) \cap \bbP(W) \subset \bbP(\bigwedge^2 V_5)$ is a smooth transverse linear section for some $\bbP(W) \subset \bbP(\bigwedge^2 V_5)$. Suppose $\cU_{k+2}$ is the tautological subbundle for $\Gr(k+2,\bigwedge^2 V_5)$ and $\cR_4$ is the tautological subbundle for $\Gr(4,V_5)$. Then for $k = 0, 1$ we have a standard flip: 
			\begin{equation}
				\begin{tikzcd}
					G_k(X) \arrow[leftrightarrow, r, dashed, "\text{flip}"] & \Gr_{\Gr(4,V_5)}(k+2,\bigwedge^2 \cR_4 \cap W) \arrow[d] \\
					&\Gr(4,V_5) 
				\end{tikzcd}
			\end{equation}
			When $k = 2$, we instead have \begin{equation}
				G_2(X) = \Gr_{\Gr(4,V_5)}(4, \bigwedge^2 \cR_4 \cap W) \sqcup \bbP_{F_3(X)}(\Sym^4 \cU_{4}^\vee)
			\end{equation}
			is a disjoint union of two smooth irreducible subvarieties. When $k \geq 2$, $G_k(X)$ is isomorphic to the Grassmann bundle $\Gr_{\Gr(4,V_5)}(k+2, \bigwedge^2 \cR_4 \cap W)$.
		\end{enumerate}
	\end{theorem}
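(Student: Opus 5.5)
The strategy is the same in all three cases. A $k$-dimensional quadric $Z \subset X$ spans a linear space $\Lambda = \langle Z\rangle \cong \bbP^{k+1}$. This gives a morphism $G_k(X) \to \Gr(k+2, V)$, and the flipped side is described by whatever extra linear-algebraic datum $\Lambda$ carries relative to $X$. The locus where the flip is not an isomorphism is always where $\Lambda \subset X$; there the fiber of $G_k(X)$ over $\Lambda$ is the full $\bbP(\Sym^2 \cU_{k+2}^\vee)$ of quadrics in $\Lambda$, and these pieces sweep out $\bbP_{F_{k+1}(X)}(\Sym^2 \cU_{k+2}^\vee)$. To show each birational map is a standard flip, I would realize both sides as smooth varieties containing projective bundles over the same center with the correct normal bundles. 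I would then check that the blow-ups along these centers agree, with a common exceptional divisor that is a $\bbP^a\times\bbP^b$-bundle over the center. This mirrors the $k=0$ argument of Theorem \ref{delpezzoflip}.

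\textbf{(a) Cubics.} For $\Lambda \not\subset X$, the intersection $\Lambda \cap X$ is a cubic hypersurface in $\Lambda$. Requiring it to contain the quadric $Z$ forces $\Lambda\cap X = Z \cup \Pi$ with $\Pi$ a residual $k$-plane, so $\Pi\in F_k(X)$. Conversely, a pair $(\Pi,\Lambda)$ with $\Pi \subset \Lambda$ of codimension one determines $Z$ as the residual quadric. The pairs $(\Pi,\Lambda)$ are exactly the points of $\bbP(\cQ_{k+1})$ over $F_k(X)$, which gives the birational map. Over $F_{k+1}(X)$ the indeterminacy on the $G_k$ side is $\bbP(\Sym^2\cU_{k+2}^\vee)$, and on the other side it is the locus of $\Lambda\subset X$, namely $\bbP(\cU_{k+2}^\vee)$ (the choices of hyperplane $\Pi\subset\Lambda$). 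I would then verify smoothness of $F_k(X)$ and $F_{k+1}(X)$ for generic $X$ and the expected dimensions, and compute the normal bundles as $\cO(-1)\otimes(\text{other fiber})$ twists. Finally I would construct the common blow-up as the incidence variety of triples $(Z,\Pi,\Lambda)$ with $Z\cup\Pi\subset\Lambda\cap X$.

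\textbf{(b) Intersections of two quadrics.} Here $\Lambda\cap Q_1$ and $\Lambda\cap Q_2$ cut out $Z$ only if, for $\Lambda \not\subset X$, exactly one quadric $Q_t$ of the pencil contains $\Lambda$. That is, $\Lambda$ is isotropic for $Q_t$, and every other member of the pencil restricts to the quadric $Z$ on $\Lambda$. This gives the map $G_k(X) \dashrightarrow \OGr(k+2,\fQ)$, $Z\mapsto (t,\Lambda)$, which is an isomorphism away from the $\Lambda \subset X$. Over such $\Lambda\in F_{k+1}(X)$, the fiber is $\bbP(\Sym^2\cU_{k+2}^\vee)$ on the $G_k$ side and the whole pencil $\bbP^1$ on the $\OGr$ side. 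The center is $F_{k+1}(X)$, and the exceptional loci are a $\bbP(\Sym^2)$-bundle and a $\bbP^1$-bundle. To make the standard flip check consistent with the numerical conditions, I would need to match the codimensions: the normal bundle of one should be the twisted dual fiber of the other.

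\textbf{(c) Linear sections of $\Gr(2,V_5)$.} A quadric on $\Gr(2,V_5)$ spanning $\bbP^{k+1}$ lies in some $\Gr(2,V_4)$, using the classical fact that conics and quadric surfaces in $\Gr(2,5)$ are contained in a $\Gr(2,4)$ generically. Hence $\Lambda\subset\bbP(\bigwedge^2 V_4\cap W)$, and $Z=\Lambda\cap\Gr(2,V_4)$. This gives $G_k(X)\dashrightarrow\Gr_{\Gr(4,V_5)}(k+2,\bigwedge^2\cR_4\cap W)$, and the flipping loci are where $V_4$ is not unique, i.e. $\Lambda\subset X$. The cases $k\ge2$ come from dimension counting: a $\bbP^{k+1}$ with $k+1\ge 3$ does not lie in $\Gr(2,5)$, and the maximal planes of $\Gr(2,4)$ are $\bbP^2$'s. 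For $k=2$, the extra component is $\bbP_{F_3(X)}(\Sym^4\ldots)$ — as written in the statement; possibly it should be $\Sym^2$. I would identify it by hand.

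The main obstacle I expect is the normal bundle and blow-up verification in each case, especially proving that the two exceptional divisors coincide scheme-theoretically. That requires deformation-theoretic computations of $T G_k(X)$ along the center, via $H^0(N_{Z/X})$. For this I would first try an explicit incidence variety together with a Koszul resolution of $N_{Z/X}$.
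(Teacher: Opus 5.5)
Your pointwise descriptions of the three birational maps agree with the paper's: the residual $k$-plane for cubics, the unique member of the pencil containing $\Lambda$, and the $\Gr(2,V_4)$ containing $Z$. But the proposal stops where the proof starts. You call the blow-up comparison ``the main obstacle'' without a way through it. You also treat smoothness of $G_k(X)$ as an input, although it is part of the conclusion; a pointwise bijection $U=G_k(X)\setminus Z\to U'=Y'\setminus Z'$ does not make $U$ smooth.

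The missing idea is to work on the flipped side $Y'$. Factor the equations of $F_{k+1}(X)\subset\Gr(k+2,V)$ through the tautological line to get $\mathbf{s}\colon\cO_{Y'}(1)\to\Sym^2\cU_{k+2}^\vee$:
in (a) this is the residual quadric $f|_\Lambda/h$; in (b) the composite $W\to W/\cO(-1)\simeq\cO_{Y'}(1)\to\Sym^2\cU_{k+2}^\vee$; in (c) the composite $V_5\to V_5/\cR_4\to\Sym^2\cU_{k+2}^\vee$.

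The steps from there are:
\begin{enumerate}
\item $\mathbf{s}$ is fiberwise nonzero exactly off $Z'$, so it defines $U'\to U$.
\item The same construction on $G_k(X)$ (dividing the restricted equations of $X$ by the tautological quadric) gives an inverse $\mathbf{s}'$, so $U\simeq U'$.
\item $V(\mathbf{s})=Z'$, and a dimension count gives $\on{codim}Z'=\on{rank}\Sym^2\cU_{k+2}^\vee$, so $\mathbf{s}$ is a regular section.
\item Hence $\Bl_{Z'}Y'$ embeds in $\bbP(\Sym^2\cU_{k+2}^\vee)\times_G Y'$ and maps onto $G_k(X)$, and Koszul gives $\cN_{Z'/Y'}\simeq\cO(-1)\otimes\Sym^2\cU_{k+2}^\vee$.
\item Smoothness along $Z$ comes from $h^1(\cN_{\Sigma/X})=0$, your deformation step. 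With it, purity of the exceptional locus forces $\Bl_{Z'}Y'\simeq\Bl_Z G_k(X)$.
\end{enumerate}
No direct computation of $\cN_{Z/G_k(X)}$ and no scheme-theoretic matching of exceptional divisors is needed. Your incidence variety in (a) is exactly this blow-up, but only with the condition written as $f|_\Lambda\in\langle qh\rangle$, since ``$Z\cup\Pi$'' degenerates when $\Pi\subset Z$. It also requires regularity of $\mathbf{s}$ to be known first.

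Part (c) contains three errors.
\begin{enumerate}
\item ``A $\bbP^{k+1}$ with $k+1\ge 3$ does not lie in $\Gr(2,5)$'' is false. The $\sigma$-spaces $\bbP(V_1\wedge V_5)\simeq\bbP^3$ do, and $F_3(X)\neq\varnothing$ for $n=5,6$. These spaces are the source of the second component of $G_2(X)$. You are right that this component should be $\bbP_{F_3(X)}(\Sym^2\cU_4^\vee)$; also, the final ``$k\ge2$'' should read $k>2$.

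The correct mechanism is as follows. $\Gr(2,V_4)$ contains no $\bbP^3$, so $Z'=\varnothing$ on the Grassmann-bundle side while $Z\neq\varnothing$. Since $U\simeq Y'$ is proper, $U$ is closed in $G_2(X)$, giving $G_2(X)=U\sqcup Z$.

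\item ``Where $V_4$ is not unique, i.e.\ $\Lambda\subset X$'' is wrong. For a $\sigma$-plane $\bbP(V_1\wedge V_4)\subset X$ with $k=1$, the $V_4$ is unique, yet the plane lies in the flipping locus. The fibers of $Z'\to F_2(X)$ are points over $F_2^\sigma(X)$ and lines over $F_2^\tau(X)$, and equidimensionality of $Z'$ must be checked.

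\item ``Generically contained in a $\Gr(2,4)$'' is not enough. You need it for every $Z$ with $\Lambda\not\subset X$, and in families. This follows because the Pl\"ucker quadrics containing $Z$ form a hyperplane $R_4\subset V_5$, and $\bigcap_{v\in R_4}Q_v=\Gr(2,V_5)\cup\bbP(\bigwedge^2 R_4)$.
\end{enumerate}
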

	\begin{remark}
		\begin{itemize}
			\item Taking $k = 0$ in part (a) recovers the flip of Galkin--Shinder and Voisin \cite{gsflip, voisin}. 
			\item Part (b) of this theorem was suggested in Remark A.8 of the appendix by Kuznetsov to \cite{colkuz}, and there is some overlap in the intermediate results. For example, Kuznetsnov shows that $G_k(X)$ and $\OGr(k+2,\fQ)$ are birational in Proposition A.7. In \cite{nonclosedfields} this description was given for the complete intersection of two quadrics in $\bbP^5$. 
			\item Part (c) generalizes results of \cite{chunghonglee, conicsquinticdelpezzo} for conics in linear sections of the Pl\"ucker embedding of $\Gr(2,5)$ in $\bbP^9$, and our methods are similar (though we avoid discussing clean intersections for the blowup center, and instead show the center is smooth by explicitly identifying it as the projectivization of a vector bundle). Moreover, the results of part (c) are closely related to similar results for Gushel--Mukai varieties in \cite{GMquadrics}.
			\item Beyond these cases, similar methods prove that for a smooth quadric $Q \subset \bbP(V)$, 
			\[G_k(Q) = \Bl_{\OGr(k+2,Q)}\Gr(k+2,V).\]
			More generally, for a variety $X \subset \bbP(V)$ cut out by a space of quadrics $W \subset \Sym^2 V^\vee$, one should expect an isomorphism between an open subset of $G_k(X)$ and an open subset of the relative Fano scheme $F_{k+1}(\cX/\bbP(W^\vee))$, where the fiber of $\cX \to \bbP(W^\vee)$ over $\phi \in W^\vee$ is cut out by the quadrics in $\ker \phi$. 
		\end{itemize}
	\end{remark}
	While Theorem \ref{geomflipthm} is a strictly stronger result than Theorem \ref{delpezzoflip} for del Pezzo varieties of degree 3, 4 or 5, the proof of Theorem \ref{delpezzoflip} is completely uniform in the degree. On the other hand, while each part of Theorem \ref{geomflipthm} falls within the same framework, the details of the proofs differ. \par 
	The methods in this paper are inspired by those of \cite{GMquadrics}. For a del Pezzo variety $X \subset \bbP(V)$ in the cases of Theorem \ref{geomflipthm} above, the general strategy we use is to exploit the natural morphism from $G_k(X) \to \Gr(k+2,V)$ taking a quadric $\Sigma \subset X$ to its linear span $\langle \Sigma \rangle$. We then find a smooth projective variety $Y'$ which also admits a natural map to $\Gr(k+2,V)$ and show moduli-theoretically that there is an isomorphism on open sets of $G_k(X)$ and $Y'$ over $\Gr(k+2,V)$, which gives us a large smooth open subset of $G_k(X)$. Using deformation theory, we show that $G_k(X)$ is smooth on the complement of this open set, and then complete the argument by showing that the blowup of $Y'$ on the complement of this open set naturally admits a blowdown to $G_k(X)$ --- except in one case, where $G_k(X)$ is disconnected. \par 
	\subsection{Derived categories}
	As an immediate corollary of Theorem \ref{geomflipthm}, we deduce: 
	\begin{corollary} \label{sodcorollary}
		Under the hypotheses of the theorem above, we have semiorthogonal decompositions (where we have suppressed the embedding functors):
		\begin{enumerate}
			\item When $X \subset \bbP^{n+1}$ is a cubic hypersurface, 
			\begin{equation}
				\begin{aligned}
					\Db(G_k(X)) &= \langle\Db(\bbP_X(\cQ_{k+1})), \underbrace{\Db(F_{k+1}(X)),\dots,\Db(F_{k+1}(X))}_{{k+3 \choose 2} - (k+2)\text{ times}}\rangle \\
					&= \langle\underbrace{\Db(F_k(X)),\dots,\Db(F_k(X))}_{n-k+2\text{ times}}, \underbrace{\Db(F_{k+1}(X)),\dots,\Db(F_{k+1}(X))}_{{k+3 \choose 2} - (k+2)\text{ times}}\rangle.
				\end{aligned}
			\end{equation}
			\item When $X \subset \bbP^{n+2}$ is a complete intersection of two quadrics, 
			\begin{equation}
				\begin{aligned}
					\Db(G_k(X)) &= \langle\Db(\OGr(k+2,\fQ)), \underbrace{\Db(F_{k+1}(X)),\dots,\Db(F_{k+1}(X))}_{{k+3 \choose 2} - 2\text{ times}}\rangle. \\
				\end{aligned}
			\end{equation}
			\item When $X$ is a linear section of $\Gr(2,V_5)$, $\Db(G_k(X))$ admits a full exceptional collection. 
		\end{enumerate}
	\end{corollary}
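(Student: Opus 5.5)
The plan is to feed the standard flips of Theorem \ref{geomflipthm} into the general theorem that a standard flip $X_- \dashleftarrow\dashrightarrow X_+$ yields a semiorthogonal decomposition (Bondal--Orlov). Concretely, let $Z \subset X_-$ be a flipping center that is a $\bbP^{a}$-bundle $\bbP_S(E)$ with normal bundle $\cO(-1)\otimes F$, where $F$ has rank $b+1$. Let the corresponding center in $X_+$ be $\bbP_S(F^\vee)$, with $a \ge b$. Then
\[\Db(X_-) = \langle \Db(X_+), \Db(S), \dots, \Db(S)\rangle,\]
with $a-b$ copies of $\Db(S)$. Equivalently, $\Db(\Bl_Z X_-)=\Db(\Bl X_+)$, and we compare the two Orlov blowup decompositions. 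In each case we have to read off the base $S$ and the ranks $a+1$, $b+1$ from the precise statements of Theorems \ref{geomflipcubics}, \ref{geomflipquartic}, \ref{geomflipquintic}.

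For (1), the flipping locus in $G_k(X)$ should consist of $k$-dimensional quadrics lying in a $(k+1)$-plane contained in $X$. It is the bundle $\bbP_{F_{k+1}(X)}(\Sym^2\cU_{k+2}^\vee)$ over $F_{k+1}(X)$, of fiber dimension $\binom{k+3}{2}-1$. On the other side the center is a $\bbP^{k+1}$-bundle over $F_{k+1}(X)$: given a $(k+1)$-plane $\Lambda$, these are the pairs $(\Lambda',\Lambda)$ with $\Lambda'\subset\Lambda$ a $k$-plane, sitting inside $\bbP_{F_k(X)}(\cQ_{k+1})$. The difference is $\binom{k+3}{2}-(k+2)$ copies of $\Db(F_{k+1}(X))$. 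For the second line, $\bbP_{F_k(X)}(\cQ_{k+1})$ is a projective bundle of rank $n+2-(k+1)=n-k+1$. Orlov's projective bundle formula then gives $n-k+1$ copies of $\Db(F_k(X))$. I will check this count against the stated $n-k+2$ by recomputing the rank of $\cQ_{k+1}$ as defined in Theorem \ref{geomflipthm}(a); the $k=0$ case, where the base is $X$ and there are $n$ copies, is the test.

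For (2), the same count applies. The flipping locus in $G_k(X)$ is again $\bbP_{F_{k+1}(X)}(\Sym^2\cU_{k+2}^\vee)$. A $(k+1)$-plane on $X$ is isotropic for every quadric of the pencil, so on the $\OGr(k+2,\fQ)$ side the center is a $\bbP^1$-bundle over $F_{k+1}(X)$. This gives $\binom{k+3}{2}-2$ copies, and no further decomposition of $\Db(\OGr(k+2,\fQ))$ is asserted.

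For (3), when $k\le1$ the flip gives
\[\Db(G_k(X))=\langle \Db(\Gr_{\Gr(4,V_5)}(k+2,\wedge^2\cR_4\cap W)),\ \text{copies of }\Db(S)\rangle,\]
where $S$ is the base of the center. By Theorem \ref{geomflipthm}(c) the centers are projective bundles over Fano schemes of planes on $X$; these are linear sections of isotropic Grassmannians, so we invoke known full exceptional collections for them (Kuznetsov's work on Fano schemes of linear sections of $\Gr(2,5)$, e.g.\ points, $\bbP^1$'s, or flag-type varieties). A Grassmann bundle over $\Gr(4,V_5)$ has a full exceptional collection by Orlov/Kapranov applied to the full exceptional collection on $\Gr(4,5)=\bbP^4$. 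For $k\ge2$ we use the isomorphism and the disjoint union description directly. When $k=2$, $F_3(X)$ is a finite set or empty, so $\bbP_{F_3(X)}(\Sym^4\cU_4^\vee)$ is a disjoint union of projective spaces.

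The main obstacle is identifying $S$ and the ranks correctly in each case, and confirming that the flip is a standard one of the shape Bondal--Orlov requires, with normal bundle of the form $\cO(-1)\otimes\pi^*F$. This should be part of the precise theorems in Section \ref{geomflipsection}. For (3) one additionally has to ensure that the Fano schemes arising as flip centers have full exceptional collections; I would try to see them as Grassmann or projective bundles over $\Gr(4,V_5)$ or $\Gr(3,V_5)$ restricted to $W$.
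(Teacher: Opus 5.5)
Your treatment of (1) and (2) follows the paper's argument. You plug the flips of Theorem~\ref{geomflipthm} into the standard-flip decomposition of \cite[Theorem A]{cubicflip}. You read off $r-s$ by comparing the rank $\binom{k+3}{2}$ with $k+2$ for cubics, and with $2$ (the $\bbP^1$-bundle $F_{k+1}(X)\times\bbP(W)$) for two quadrics. Then you apply Orlov's projective bundle formula. On the count you left open, $n-k+1$ is correct: $\cQ_{k+1}$ has rank $(n+2)-(k+1)$ on $F_k(X)$. The $n-k+2$ in the statement is a slip, as is $\bbP_X$ in place of $\bbP_{F_k(X)}$. Your proposed test is itself off, though. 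For $k=0$ the base is the $n$-dimensional cubic and $\bbP_X(\cQ_1)$ is a $\bbP^n$-bundle, so there are $n+1$ copies of $\Db(X)$, not $n$; the $n$ belongs to the introduction's convention $X\subset\bbP^n$. For a cubic surface, $\chi(X^{[2]})=\binom{10}{2}+9=54=27+3\cdot 9$ confirms this.

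The genuine gap is in (3). After the flip, the claim reduces to two things. First, a full exceptional collection on the Grassmann bundle over $\Gr(4,V_5)\simeq\bbP^4$; this is fine, as you say. Second, a full exceptional collection on each connected component of $F_{k+1}(X)$, and your justification here does not work. These components are not linear sections of isotropic Grassmannians, and in general they are not Grassmann or projective bundles over $\Gr(4,V_5)$ or $\Gr(3,V_5)$.

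For example, take $X=\Gr(2,V_5)\cap\bbP(W)$ with $W=\ker\omega$ for a $2$-form $\omega$ of rank $4$. A line in $X$ is a flag $V_1\subset V_3$ with $\omega(V_1,V_3)=0$. The fibre of the projection to $V_3$ is $\bbP(\ker \omega|_{V_3})$. This is a point unless $V_3$ is $\omega$-isotropic, and the isotropic locus is $Q^3=\on{LGr}(2,V_5/\ker\omega)$. So $F_1(X)\simeq\Bl_{Q^3}\Gr(3,V_5)$.

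The missing input is exactly the classification the paper cites, \cite[Lemma 3.2]{GMquadrics}. Each component is a type A partial flag variety, a smooth quadric, or a blowup of a type A partial flag variety along a linear subspace or a smooth quadric. Orlov's blowup formula together with Kapranov's collections then finishes the argument.

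There are also two smaller errors. $F_3(X)$ is not always finite: for $X=\Gr(2,V_5)$ it is $\bbP^4$, of dimension $4$ in Proposition~\ref{fanodims}. This is harmless, since $\bbP_{\bbP^4}(\Sym^2\cU_4^\vee)$ still has a full exceptional collection. And the relevant bundle is $\Sym^2\cU_4^\vee$, not $\Sym^4$.
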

	\begin{proof}
		For cubics and complete intersections of quadrics, the claims follow immediately from \cite[Theorem A]{cubicflip} and Theorem \ref{geomflipthm}. \par 
		For linear sections of Grassmannians, \cite[Theorem A]{cubicflip} combined with Theorem \ref{geomflipthm} implies that $\Db(G_k(X))$ admits a decomposition whose components are the derived category of $\Gr_{\Gr(4,V_5)}(k+2,\cR)$ and the derived category of (connected components of) $F_{k+1}(X)$. The first is a Grassmann bundle over a projective space and therefore has a full exceptional collection. By the classification in \cite[Lemma 3.2]{GMquadrics}, every component of $F_{k+1}(X)$ is either a type A partial flag variety, a smooth quadric, or a blowup of a type A partial flag variety along either a linear subspace or a smooth quadric; by combining Orlov's blowup formula \cite{orlovblowupformula} with Kapranov's exceptional collections on type A partial flag varieties and on smooth quadrics the existence of the full exceptional collection follows. 
	\end{proof}
	In Section \ref{application}, we present an application of these geometric constructions to finding new semiorthogonal decompositions. Using the case of Theorem \ref{geomflipthm} for complete intersections of two quadrics along with a conjectural decomposition of $\Db(F_1(X))$ from \cite{fanoschemequaddecomp}, we use categorical heuristics to propose and give evidence for a conjectural semiorthogonal decomposition for derived categories of some fibrations in orthogonal Grassmannians, allowing for psosibly degenerate quadratic forms. \par 
	Finally, in Section \ref{counterexample} we show that the Theorems \ref{delpezzoflip} and \ref{geomflipthm} are sharp in the degree. To be more precise, we show that for a smooth del Pezzo threefold of degree 2 --- a quartic double solid --- there is an obstruction to the existence of any flip that would induce a semiorthogonal decomposition of $\Db(X^{[2]})$ including $\Db(F_1(X))$ as a semiorthogonal component. 
	\subsection{Acknowledgments}
	The author would like to thank David Stapleton and Riku Kurama for interesting and useful conversations, as well as Pieter Belmans and Alexander Kuznetsov who read a preliminary version of this paper. Special thanks go to Alexander Perry for his continued support. This material is based upon work supported by the National Science Foundation under NSF RTG grant DMS-1840234 and NSF FRG grant DMS-2052750.
	\section{Background}
	\subsection{Del Pezzo varieties}
	Recall the following generalization of del Pezzo surfaces to varieties of dimension $\geq 2$. 
	\begin{defn}
		A \emph{del Pezzo variety} is a polarized smooth variety $(X,H)$ of dimension $n$ such that $-K_X = (n-1)H$.
	\end{defn} 
	In fact for such a given variety $X$, there is a unique choice of such a polarization $H$. The following proposition can be checked by induction on the dimension of $X$. 
	\begin{proposition}[Corollary 3.4 of \cite{kuzdelpezzo}]
		Suppose that $(X,H)$ is a del Pezzo variety as defined above. Then $\Pic(X)$ is a free abelian group. In particular, if there exists another polarization $H'$ such that $-K_X = (n-1)H'$, it follows that $H = H'$. 
	\end{proposition}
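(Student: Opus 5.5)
The statement has two parts: $\Pic(X)$ is free, and the polarization is unique. The second follows from the first. If $-K_X=(n-1)H=(n-1)H'$ in $\Pic(X)$, then $(n-1)(H-H')=0$. When $n\geq 2$ we have $n-1\geq 1$, so torsion-freeness of $\Pic(X)$ gives $H=H'$. All the work is therefore in showing $\Pic(X)$ is torsion-free (it is finitely generated in any case, as we explain below).

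The first step is to show $H^i(X,\cO_X)=0$ for $i>0$. Since $-K_X=(n-1)H$ is ample, $X$ is Fano, and Kodaira vanishing gives $H^i(X,\cO_X)=H^i(X,K_X\otimes(-K_X))=0$ for $i>0$. The exponential sequence then identifies $\Pic(X)\cong H^2(X,\bZ)$, which is finitely generated. Fano varieties are also rationally connected, hence simply connected. Consequently $H_1(X,\bZ)=0$, and by the universal coefficient theorem the torsion of $H^2(X,\bZ)$ equals the torsion of $H_1(X,\bZ)$, which is zero. This already proves the claim without induction.

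To stay closer to the suggested induction on dimension, one can instead argue as follows. Choose a smooth member $X'\in|H|$. It exists when $d(X)\geq 2$, or in general by Fujita's results that $|H|$ contains a smooth divisor. Adjunction gives $-K_{X'}=(n-2)H|_{X'}$, so $X'$ is again del Pezzo when $n-1\geq 2$. Lefschetz (Grothendieck--Lefschetz for $n\geq 4$) then gives an injection $\Pic(X)\hookrightarrow \Pic(X')$, so freeness propagates up from the base case. The base case is a del Pezzo surface, i.e. $\bbP^2$ blown up in points or $\bbP^1\times\bbP^1$, whose Picard group is visibly free. For $n=3$, Lefschetz gives only an injection on $H^2$, which is still enough for freeness.

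The main obstacle in the inductive route is the existence of a smooth member of $|H|$ in low degree, namely $d=1,2$; this would have to be imported from Fujita. The topological argument above avoids this issue entirely, so it is the one I would present. The only input it needs is simple connectivity of Fano manifolds (Campana, Koll\'ar--Miyaoka--Mori).
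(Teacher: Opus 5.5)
Your proof is correct, and your main argument takes a different route from the paper. The paper gives no argument beyond the citation to Kuznetsov and the remark that the statement can be checked by induction on $\dim X$. That is presumably your second sketch: cut by a smooth member of $|H|$, which is again del Pezzo by adjunction, inject $\Pic(X)$ into its Picard group by Lefschetz, and bottom out at del Pezzo surfaces.

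Your topological argument (Kodaira vanishing, $\Pic(X)\cong H^2(X,\bZ)$ from the exponential sequence, $\pi_1(X)=1$, universal coefficients) is non-inductive and applies verbatim to every Fano manifold. It needs neither the surface classification nor a smooth member of $|H|$. In degree $1$ the latter is a real input from Fujita, because $|H|$ has a base point.

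The price is the appeal to simple connectivity of rationally connected manifolds, which is more than you need. Suppose $L^{\otimes m}\cong\cO_X$. Then $c_1(L)$ is torsion, so Riemann--Roch gives $\chi(L)=\chi(\cO_X)=1$. Also $L\otimes\omega_X^{-1}$ is ample, since its $m$-th power is $\omega_X^{-m}$, so Kodaira gives $H^i(X,L)=0$ for $i>0$. Thus $L$ has a nonzero section $s$. Since $s^{m}$ is a nonzero constant section of $L^{\otimes m}\cong\cO_X$, $s$ vanishes nowhere and $L\cong\cO_X$.

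The inductive route stays inside the del Pezzo class and uses only Lefschetz and the surface case, at the cost of the smooth-member input. For $n=3$ you correctly saw that one must pass through $H^2$. Since $H^1(\cO_X)=0$, $\Pic(X)\hookrightarrow H^2(X,\bZ)$, and Lefschetz injectivity on $H^2$ then gives injectivity on Picard groups.
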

	As such, it is reasonable to omit the polarization $H$ when discussing a given del Pezzo variety $X$, since all of the geometry of the pair $(X,H)$ is in fact intrinsic to $X$. \par 
	The following proposition is a classical result, and gives a numerical criterion for the global generation of the linear system $\abs{H}$ based on the degree $d(X) \coloneq (H)^n$. 
	\begin{proposition}[Proposition 2.2 of \cite{kuzdelpezzo}] 
		Suppose $X$ is a del Pezzo variety. Then if $d(X) \geq 2$, the linear system $\abs{H}$ is base-point-free and the morphism $X \to \abs{H}$ is finite onto its image. If $d(X) \geq 3$, the linear system $\abs{H}$ is very ample. 
	\end{proposition}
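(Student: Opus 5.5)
The plan is to argue by induction on $n=\dim X$, cutting down by general hyperplane sections until we reach a del Pezzo surface, where the statement is classical. The adjunction step is straightforward. If $X'\in\abs{H}$ is a smooth member, then $K_{X'}=(K_X+H)|_{X'}=-(n-2)H|_{X'}$. So $(X',H|_{X'})$ is a del Pezzo variety of dimension $n-1$ with the same degree $d$.

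To make the induction work I first need a smooth member of $\abs{H}$. For this I would use Fujita's ladder results: the Kawamata--Shokurov type nonvanishing (or Fujita's theory of $\Delta$-genus) gives $h^0(H)=d+n-1$. With this count, $\Delta(X,H)=n+d-h^0(H)=1$ for every del Pezzo variety. Fujita's classification of polarized varieties of $\Delta$-genus one then produces a smooth rung, i.e.\ a smooth member of $\abs{H}$. Fujita also proves $\abs{H}$ is base-point-free when $d\ge 2$.

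Next I need to lift global generation and very ampleness back from $X'$ to $X$. Kodaira vanishing gives $H^1(X,\cO_X)=0$, since $-K_X$ is ample. From the exact sequence $0\to\cO_X\to\cO_X(H)\to\cO_{X'}(H)\to 0$, the restriction $H^0(X,H)\to H^0(X',H|_{X'})$ is therefore surjective.

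Base-point-freeness now lifts. Any base point of $\abs{H}$ lies on $X'$, where sections separate it by the inductive hypothesis. Points off $X'$ are not base points, because $X'$ itself is the zero locus of a section.

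Finiteness of $\phi\colon X\to\abs{H}^\vee$ comes for free from ampleness. A base-point-free ample linear system contracts no curve, so $\phi$ is quasi-finite and proper, hence finite.

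For very ampleness with $d\ge 3$, I would argue on the Stein factorization. The map $\phi$ is finite of degree $\deg\phi$ onto an image of degree $d/\deg\phi$. The image is nondegenerate of dimension $n$ in $\bbP^{d+n-2}$, so it has degree at least $\operatorname{codim}+1=d-1$. This forces $\deg\phi\in\{1,2\}$, and $\deg\phi=2$ is possible only when $d=2$ or $d\le 2$; hence for $d\ge 3$ the map $\phi$ is birational.

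Birationality alone does not give an embedding, so to upgrade to an embedding I would use the surjectivity of restriction again. Separation of points and tangent vectors lying on a common hyperplane section follows from very ampleness on $X'$. Since any two points (or a point with a tangent direction) lie on some smooth member of $\abs{H}$ by Bertini, this covers everything. An alternative route here is to check projective normality via the ladder and invoke Fujita's results.

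The base case is a del Pezzo surface of degree $d\ge 3$, which is the blowup of $\bbP^2$ in $9-d$ general points (or $\bbP^1\times\bbP^1$). There the anticanonical system is classically very ample.

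The main obstacle I expect is the existence of a smooth member of $\abs{H}$ in higher dimension, since Bertini requires base-point-freeness first, which is circular. I would first try to break the circularity by proving base-point-freeness directly, using $H^1(X',\cO_{X'})=0$ along a possibly singular ladder, à la Fujita. Failing that, I would simply cite Fujita's classification.
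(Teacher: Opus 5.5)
The paper gives no proof of this statement. It is quoted from \cite{kuzdelpezzo}, which rests on Fujita's theory \cite{fujitaI,fujitaII,fujitaIII}. Suppose you take a smooth member of $\abs{H}$ from Fujita; as you note, it cannot come from Bertini before base-point-freeness is known. Then your lifting of base-point-freeness is correct, as are the finiteness of $\phi$ and the degree bound forcing $\phi$ to be birational for $d\geq 3$. So this half rests on the same source as the paper's citation. Incidentally, $h^0(H)=n+d-1$ needs no nonvanishing theorem. The polynomial $P(t)=\chi(tH)$ vanishes at $t=-1,\dots,-(n-2)$ by Kodaira, and satisfies $P(t)=(-1)^nP(1-n-t)$ by Serre duality. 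It also has $P(0)=1$ and leading coefficient $d/n!$, and together these force $P(1)=n+d-1$.

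The genuine gap is in the very ampleness step. Your claim that any two points, or a point with a tangent direction, lie on a smooth member of $\abs{H}$ ``by Bertini'' is circular. Consider exactly the cases you must exclude: $\phi(x)=\phi(y)$ with $x\neq y$, or $d\phi_x(v)=0$ with $v\neq 0$. There the members of $\abs{H}$ through $x$ are the pullbacks $\phi^*\Pi$ of hyperplanes $\Pi$ through $p=\phi(x)$. They automatically contain $y$ (resp.\ contain $v$ in their Zariski tangent space), and their base locus is $\phi^{-1}(p)$. Bertini says nothing at base points. At $z\in\phi^{-1}(p)$, a local equation $s=\ell\circ\phi$ has $ds_z=d\ell_p\circ d\phi_z$, so a general such member is smooth at $z$ if and only if $d\phi_z\neq 0$. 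Nothing in your argument rules out $d\phi_z=0$: a finite birational map can have vanishing differential at a point, e.g.\ $t\mapsto(t^2,t^3)$. Moreover, non-vanishing of $d\phi$ is part of what very ampleness asserts. So the induction never reaches the points where $\phi$ could fail to be an embedding.

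The route you mention only in passing repairs this and avoids pointwise separation altogether. Once $\abs{H}$ is base-point-free, Bertini gives a ladder $X=X_n\supset X_{n-1}\supset\dots\supset X_1=C$ of general smooth members. Each $X_i$ with $i\geq 2$ is a del Pezzo variety of degree $d$, and $C$ is an elliptic curve of degree $d$. Kodaira vanishing gives $H^1(X_i,mH)=0$ for all $m\geq 0$, hence the restrictions $H^0(X_i,mH)\to H^0(X_{i-1},mH)$ are surjective for all $m$. Also $C$ is projectively normal, since $d\geq 3=2g(C)+1$. Let $s_i$ be the equation of $X_{i-1}$ in $X_i$. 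Castelnuovo's argument,
$H^0(X_i,mH)=s_i\cdot H^0(X_i,(m-1)H)+\operatorname{im}\bigl(\Sym^m H^0(X_i,H)\bigr)$,
then shows inductively that $\bigoplus_m H^0(X,mH)$ is generated in degree one. As $H$ is ample, $X$ is the $\operatorname{Proj}$ of this ring, hence a closed subscheme of $\bbP(H^0(X,H)^\vee)$; that is, $H$ is very ample. This also makes your birationality argument unnecessary.
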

	A more precise classification result by the degree $d(X)$ is also possible, by the work of Fujita \cite{fujitaI,fujitaII,fujitaIII}, though we will use it as stated in \cite{kuzdelpezzo}. 
	\begin{theorem}[Theorem 1.2 of \cite{kuzdelpezzo}]
		Suppose that $X$ is a del Pezzo variety of dimension $n$. Then: 
		\begin{enumerate}[(i)]
			\item If $d(X) = 1$, then $X$ is a sextic hypersurface avoiding singular points in $\bbP(1^n, 2,3)$. 
			\item If $d(X) = 2$, then $X$ is a double covering of $\bbP^n$ branched over a quartic hypersurface via the linear system $\abs{H}$. 
			\item If $d(X) = 3$, then $X$ is a cubic hypersurface in $\bbP^{n+1}$.
			\item If $d(X) = 4$, then $X$ is a complete intersection of 2 quadric hypersurfaces in $\bbP^{n+2}$. 
			\item If $d(X) = 5$, then $X$ is a linear section of $\Gr(2,5) \subset \bbP^9$ embedded via the Pl\"ucker embedding, with $2 \leq \dim X \leq 6$. 
			\item If $d(X) = 6$, then either $X$ is a linear section of $(\bbP^1)^3 \subset \bbP^7$ or $(\bbP^2)^2\subset \bbP^8$ embedded via the Segre embeddings. 
			\item If $d(X) = 7$, then either $X$ is a del Pezzo surface of degree 7 or $X$ is a blowup of $\bbP^3$ at a single point. 
			\item If $d(X) = 8$, then either $X \simeq \bbP^3$ or $X$ is a del Pezzo surface of degree 8. 
			\item If $d(X) = 9$, then $X$ is $\bbP^2$. 
		\end{enumerate}
	\end{theorem}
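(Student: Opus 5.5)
The plan is the classical ladder argument of Fujita: cut $X$ down by smooth members of $\abs{H}$ until we reach a del Pezzo surface, then lift the description back up using the fact that all the relevant cohomology vanishes.

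\emph{Step 1: the ladder.} First I will show that a general member $X_{n-1}\in\abs{H}$ is smooth. By adjunction, $-K_{X_{n-1}}=(n-2)H|_{X_{n-1}}$, so $(X_{n-1},H|_{X_{n-1}})$ is again a del Pezzo variety of the same degree $d$.
\begin{itemize}
\item When $d\geq 2$, the linear system is base-point-free by the earlier Proposition 2.2 of \cite{kuzdelpezzo}, so Bertini gives smoothness directly.
\item When $d=1$, $\abs{H}$ has a single base point. Here I would use the local form of the linear system at that point (Fujita's argument) to show that a general member is still smooth there.
\end{itemize}
Iterating produces a ladder $X=X_n\supset X_{n-1}\supset\dots\supset X_2=S$. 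The bottom rung $S$ is a del Pezzo surface of degree $d$, and such surfaces are classically known: a sextic in $\bbP(1,1,2,3)$, a quartic double plane, a cubic surface, a $(2,2)$ intersection in $\bbP^4$, a blowup of $\bbP^2$ in $9-d$ points, or $\bbP^1\times\bbP^1$.

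\emph{Step 2: Hilbert polynomial and surjectivity.} Since $tH-K_X=(t+n-1)H$ is ample for $t>-(n-1)$, Kodaira vanishing gives $H^i(X,tH)=0$ for $i>0$ and all $t>-(n-1)$. It then follows that $\chi(tH)$ is determined by $n$ and $d$ alone, and in particular $h^0(H)=n+d-1$. Applying the sequences $0\to\cO((t-1)H)\to\cO(tH)\to\cO_{X_{n-1}}(tH)\to 0$, the vanishing of $H^1$ shows that the section ring
\[ R(X)=\bigoplus_t H^0(X,tH) \]
restricts surjectively onto $R(X_{n-1})$, with kernel generated by the equation of $X_{n-1}$ in degree one. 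Inducting down the ladder, $R(X)\cong R(S)[y_1,\dots,y_{n-2}]$ as a graded ring, with the $y_i$ a regular sequence of degree one.

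\emph{Step 3: $d\leq 4$.} For these degrees the presentation of $R(S)$ is a weighted complete intersection, and adjoining free degree-one variables preserves it. Hence $X$ is:
\begin{itemize}
\item a sextic in $\bbP(1^n,2,3)$, which must avoid the singular points by smoothness ($d=1$);
\item a double cover of $\bbP^n$ branched in a quartic ($d=2$);
\item a cubic hypersurface ($d=3$);
\item a complete intersection of two quadrics ($d=4$).
\end{itemize}
This settles (i)–(iv).

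\emph{Step 4: $d\geq 5$.} Now $X\subset\bbP^{n+d-2}$ is projectively normal, of codimension $d-2$, and cut out by quadrics. Moreover $X$ is an $(n-2)$-fold extension of the surface $S$. The remaining task is to classify the varieties having a given del Pezzo surface as a linear section. I would proceed as follows.
\begin{itemize}
\item Bound the possible extension length, via Zak–L'vovsky: a variety admits only boundedly many extensions, controlled by $h^0(N_S(-1))-(n+d-1)$. This gives $\dim X\leq 6$ for $d=5$, and $\dim X\leq 4$ or $3$ in degrees $6$–$8$.
\item Identify the extensions. Either use Mori theory when $\rho(X)\geq 2$, where the extremal contractions exhibit $(\bbP^1)^3$, $\bbP^2\times\bbP^2$, or $\Bl_{pt}\bbP^3$; or, when $\rho=1$, use the fact that $X$ carries a rank two bundle restricting to the tautological bundle of $S$, which embeds $X$ as a linear section of $\Gr(2,5)$.
\end{itemize}

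I expect this last step, the classification of extensions for $d\geq 5$ (especially recognizing $\Gr(2,5)$ and ruling out further extensions in degrees $6$–$9$), to be the main obstacle. Step 1 for $d=1$ is a secondary technical point.
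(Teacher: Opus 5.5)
The paper gives no proof of this statement; it is quoted from \cite{kuzdelpezzo}, where it is Fujita's classification. Judged on its own, your treatment of (i)--(iv) is Fujita's ladder argument and is sound in outline, granting the $d=1$ smoothness you defer to Fujita. However, Step 2 contains a false claim. A graded ring isomorphism $R(X)\cong R(S)[y_1,\dots,y_{n-2}]$ cannot hold: the right-hand side is the section ring of the cone over $S$ with vertex $\bbP^{n-3}$, which is singular along the vertex. Likewise, ``adjoining free degree-one variables'' to the equations of $S$ produces exactly this cone. What your restriction sequences actually give is $R(X)/(y_1,\dots,y_{n-2})\cong R(S)$ with $y_1,\dots,y_{n-2}$ a regular sequence. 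Generators then lift by graded Nakayama. Relations lift because if $R(X)=P/I$ with $y$ regular on $P/I$, then $I\cap yP=yI$, so lifts of the relations of $R(S)$ generate $I/yI$, hence $I$. Thus $X$ is a weighted complete intersection of the same multidegree as $S$ in a weighted projective space with $n-2$ additional weight-one coordinates. With this correction Step 3 goes through.

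The genuine gap is Step 4. It is a list of tools rather than an argument for (v)--(ix), and as sketched it does not even yield the right list.
\begin{enumerate}[(a)]
\item Your split by Picard rank fails. $(\bbP^3,\cO(2))$ is a del Pezzo threefold of degree $8$ with $\rho=1$ that is not a linear section of $\Gr(2,5)$. Your Mori outcomes also omit the hyperplane sections of $\bbP^2\times\bbP^2$, and you give no reason why $\rho(X)=1$ is impossible in degrees $6$ and $7$.
\item The rank-two bundle on $X$ extending $\cU|_S$ is the whole difficulty of the degree-$5$ case, not an available fact. It is not a formal consequence of deformation theory along the ladder: already for $S\subset X_3$ the obstruction groups $H^2(S,\mathcal{E}nd_0(\cU_S)(-tH))\simeq H^0(S,\mathcal{E}nd_0(\cU_S)((t-1)H))^\vee$ are nonzero for $t\geq 2$. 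Even granting the bundle, you must still show that the induced map to $\Gr(2,5)$ is a linear-section embedding.
\item Your extension bounds are asserted rather than computed. They say nothing about degree $9$, or about $\mathbb{F}_1$ in degree $8$, where non-extendability is needed.
\end{enumerate}

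For $d=5$ there is a complete argument using only your Step 2. The variety $X\subset\bbP^{n+3}$ is projectively normal of codimension $3$, with vanishing intermediate cohomology and $\omega_X\simeq\cO_X((1-n)H)$, hence arithmetically Gorenstein. By Buchsbaum--Eisenbud, its ideal, minimally generated by the five quadrics lifted from $S$, is generated by the $4\times 4$ Pfaffians of a $5\times 5$ skew matrix of linear forms. This exhibits $X$ as $\bbP^{n+3}\cap\Gr(2,5)$ via a linear map $\bbC^{n+4}\to\wedge^2\bbC^5$. That map is injective, since otherwise $X$ would be a cone, and in particular $n\leq 6$. Degrees $6$--$9$ still need a real argument: for instance, Lefschetz plus Kobayashi--Ochiai to obtain $\bbP^3$, and an analysis of extremal contractions once $\rho\geq 2$ is established.
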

	For the majority of this work, we focus on the case where $d(X) \geq 3$, or equivalently where the natural polarization is very ample. 
	\subsection{Hilbert schemes}
	We use the conventions and notation for Hilbert schemes presented in \cite[Section 3]{GMquadrics}. In particular for a polarized variety $(X,H)$ we let $F_k(X)$ denote the Hilbert scheme associated to the Hilbert polynomial 
	\[\frac{(t+1)\cdots(t+k)}{k!}.\]
	It is often called the \textit{Fano variety} or \textit{Fano scheme} of $k$-planes on $X$. When $X =  \bbP(V)$, this is identified with the Grassmannian $F_k(X) \simeq \Gr(k+1,V)$. We also let $G_k(X)$ denote the Hilbert scheme associated to the Hilbert polynomial 
	\[\frac{(t+1)\cdots(t+k-1)}{k!}(2t-k)\]
	and call it the Hilbert scheme of $k$-dimensional quadrics on $X$. The key observation used in \cite{GMquadrics} as well as in this paper is that any $k$-dimensional quadric $\Sigma$ is a quadric hypersurface in a linear space $\langle \Sigma \rangle$ of dimension $k + 1$, which we call its linear span. In particular there is a natural projective bundle morphism 
	\[G_k(\bbP(V)) \simeq \bbP(\Sym^2 \cU_{k+2}^\vee) \to \Gr(k+2,V).\]
	Moreover when the map induced by the linear system $H$ is an embedding $X \hookrightarrow \bbP(V)$ we have natural embeddings $F_k(X) \hookrightarrow F_k(\bbP(V)) \simeq \Gr(k+1,V)$ and $G_k(X)  \hookrightarrow G_k(\bbP(V))$, so in particular one has a natural map 
	\[G_k(X) \to \Gr(k+2,V)\]
	taking $\Sigma$ to $\langle \Sigma \rangle$. 
	\subsection{Vector bundles on Hilbert squares}
	For this section, let $X$ be any smooth variety, and take a positive integer $m$. Then to any vector bundle $\cE$ on $X$, it is possible to associate a tautological bundle $\cE^{[m]}$ on the Hilbert scheme of $m$ points $X^{[m]}$. Indeed, the Hilbert scheme $X^{[m]}$ comes with a universal closed subscheme $\cZ_m$, amounting to the following diagram: 
	\begin{center}
		\begin{tikzcd}
			& \cZ_m \arrow[rd, "q"] \arrow[ld, "p"']& \\
			X & & X^{[m]},
		\end{tikzcd}
	\end{center}
	We then define $\cE^{[m]} := q_*p^*\cE$. \par
	When $m = 2$, the Hilbert scheme and its tautological bundles are particularly easy to study. In this case it is well known that both $X^{[2]}$ and $\cZ_2$ are smooth; in fact, we can take $\cZ_2$ to be the blowup $\Bl_{\Delta} (X \times X)$ along the diagonal $\Delta\subset X\times X$, $p$ to be the composition of the blowup map with the first (or second) projection, and $q$ to be the $\bZ/2$-quotient by the natural involution. In particular, the functor $(-)^{[2]}$ is exact since $p$ is flat by miracle flatness and $q$ is finite flat. \par 
	There is a readily available description of these tautological vector bundles in terms of the $\bZ/2$-equivariant geometry of this double covering. The following canonical exact sequence appears in the proof of \cite[Proposition 2.3]{danila}, though the proof we present here differs slightly. 
	\begin{proposition}
		\label{sequenceprop}
		Suppose that $\cE$ is a vector bundle on $X$. Then there is a $\bZ/2$-equivariant sequence of vector bundles on $\cZ_2 = \Bl_{\Delta} (X \times X)$ given by 
		\begin{equation}
			0 \to q^*\cE^{[2]} \to p_1^*\cE \oplus p_2^*\cE \to \iota_*\pi^*\cE \otimes \chi\to 0.
		\end{equation}
		Here $p_i : \Bl_\Delta (X \times X) \to X$ are the compositions of the blowup morphisms with the projections to the $i$th factor, the $\bZ/2$-action on the middle term permutes the two factors, $\iota: E \to \Bl_\Delta (X \times X)$ is the inclusion of the exceptional divisor and $\pi : E \to \Delta \simeq X$ is the obvious projection. $\chi$ denotes the nontrivial character of $\bZ/2$.
	\end{proposition}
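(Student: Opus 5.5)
We will construct the map $q^*\cE^{[2]} \to p_1^*\cE \oplus p_2^*\cE$ by adjunction, construct the map $p_1^*\cE \oplus p_2^*\cE \to \iota_*\pi^*\cE \otimes \chi$ by hand, and check exactness locally.

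\textbf{The first map.} Write $\sigma$ for the involution of $\cZ_2 = \Bl_\Delta(X\times X)$ and $q : \cZ_2 \to X^{[2]}$ for the quotient. By definition $\cE^{[2]} = q_*p_1^*\cE$. Adjunction gives a counit $q^*q_*p_1^*\cE \to p_1^*\cE$. Since $p_2 = p_1\circ\sigma$ and $q\circ\sigma = q$, there is a canonical identification $q^*q_*p_1^*\cE \simeq \sigma^*q^*q_*p_1^*\cE$. Composing the counit with this identification gives a second map $q^*\cE^{[2]} \to \sigma^*p_1^*\cE = p_2^*\cE$.

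Together these define
\[
\alpha : q^*\cE^{[2]} \to p_1^*\cE \oplus p_2^*\cE .
\]
By construction $\alpha$ intertwines the natural linearization on $q^*\cE^{[2]}$ with the swap on the target, so it is $\bZ/2$-equivariant.

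\textbf{The second map.} Restricted to $E$, both $p_1$ and $p_2$ equal $\pi$, so there are canonical isomorphisms $p_i^*\cE|_E \simeq \pi^*\cE$. Define
\[
\beta : p_1^*\cE \oplus p_2^*\cE \to \iota_*\pi^*\cE, \qquad (s_1,s_2) \mapsto s_1|_E - s_2|_E .
\]
The swap sends $\beta$ to $-\beta$, which is exactly why the target carries the twist $\otimes\chi$; with this twist $\beta$ is equivariant. The map $\beta$ is surjective, since it is already surjective on the first summand. Moreover $\beta\circ\alpha = 0$: at any point of $E$ the two maps making up $\alpha$ restrict to the same map into $\pi^*\cE$, because they differ by $\sigma$ and $\sigma$ fixes $E$ pointwise.

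\textbf{Exactness.} It remains to show that $\alpha$ is injective with image exactly $\ker\beta$. Both $q^*\cE^{[2]}$ and $\ker\beta$ are locally free of rank $2r$, where $r = \operatorname{rk}\cE$. For $\ker\beta$ this holds because it is the kernel of a surjection from a rank $2r$ bundle onto a sheaf supported on the Cartier divisor $E$; it is an elementary modification, so it is reflexive on a smooth variety and in fact locally free. We therefore get a map $\alpha' : q^*\cE^{[2]} \to \ker\beta$ between vector bundles of the same rank.

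Off $E$, the map $q$ is étale. Fibrewise, $\alpha'$ is then the isomorphism $H^0(\cE|_{\{x,y\}}) = \cE_x\oplus\cE_y$, so $\alpha'$ is an isomorphism away from $E$.

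To conclude it is enough to show that $\det\alpha'$ does not vanish along $E$, or equivalently that $\alpha'$ is surjective at a generic point of $E$. This is the main obstacle. My plan is to work étale-locally, where $\cE$ is trivial, and reduce to the case $\cE = \cO$, $r=1$. The natural guess is to go further and reduce to $X = \mathbb{A}^1$, where $\cZ_2 = \mathbb{A}^2$ with coordinates $(x,y)$, $E = \{x=y\}$, and $X^{[2]} = \Sym^2\mathbb{A}^1$ with coordinates $s = x+y$, $p = xy$. Then $\cO^{[2]}$ is free on $1$ and $x$, and $\alpha$ sends these to $(1,1)$ and $(x,y)$. Meanwhile $\ker\beta = \{(f,g) : f \equiv g \bmod (x-y)\}$ is free on $(1,1)$ and $(0,x-y)$. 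Since $(x,y) = x(1,1) + (0,y-x)$, the change of basis has determinant $-1$, which is a unit, so $\alpha'$ is an isomorphism in this case.

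The one-dimensional computation does not literally cover $\dim X > 1$, because there the blowup is genuinely nontrivial. I would handle this by a local model. Near a point of $E$, choose coordinates $x,y$ on the two factors and let $t$ be the blowup coordinate with $x_i - y_i = t\,u_i$, where $u$ is the direction in the exceptional fibre. In these coordinates $\cO^{[2]}$ is free on $1$ and a linear function $\ell$ with $\ell(u)\neq 0$. The same determinant computation then applies with $t$ playing the role of $x-y$: the determinant is a unit times $\ell(u)$, hence nonzero.

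Given this, $\alpha'$ is an isomorphism everywhere, and the sequence is exact.
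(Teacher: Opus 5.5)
Your argument is correct, but it takes a different route from the paper. The paper does not build the maps by hand. It uses the semiorthogonal decomposition $\Db_{\bZ/2}(\cZ_2)=\langle \iota_*\Db(E)\otimes\chi,\ q^*\Db(X^{[2]})\rangle$ for the double cover ramified along $E$ \cite[Theorem 5.1]{gluing}. Its projection triangle $q^*\bigl[(q_*(-))^{\bZ/2}\bigr]\to\mathrm{id}\to\iota_*\bigl((\iota^*(-\otimes\chi))_{\bZ/2}\bigr)\otimes\chi$ is applied to $p_1^*\cE\oplus p_2^*\cE$. Computing invariants and coinvariants identifies the outer terms with $q^*\cE^{[2]}$ and $\iota_*\pi^*\cE\otimes\chi$, and since all three terms are sheaves the triangle is a short exact sequence. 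This gives canonicity and functoriality in $\cE$ with no local computation. The cost is reliance on a nontrivial structural theorem, and the explicit form of the maps is only read off afterwards, although that form is what Proposition~\ref{hilb2nmbdl} actually needs.

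Your approach is elementary and self-contained, and the maps are explicit from the start. All the content sits in the rank count and the local check that $\det\alpha'$ is a unit along $E$. That check amounts to $p_1^*\ell-p_2^*\ell$ vanishing to order exactly one along $E$ wherever $d\ell(u)\neq0$. Such an $\ell$ exists near every point of $E$, so this shows directly that $\alpha'$ is an isomorphism at every point of $E$. The one-dimensional computation and the ``generic point'' reduction are therefore unnecessary.

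One small correction: local freeness of $\ker\beta$ does not follow from reflexivity. That implication holds only in dimension at most $2$, whereas $\dim\cZ_2=2\dim X$. The correct reason is that $\iota_*\pi^*\cE$ is a locally free $\cO_E$-module on the Cartier divisor $E$, hence has projective dimension one. The kernel of a surjection onto it from a vector bundle is therefore locally free.
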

	\begin{proof}
		Let $G = \bZ/2$. The sequence above can be viewed as a consequence of the natural semiorthogonal decomposition of the equivariant derived category $\Db_{G}(\cZ_2)$, which follows from \cite[Theorem 5.1]{gluing}: 
		\[\Db_{G}(\cZ_2)  = \langle \iota_*\Db(E) \otimes \chi, q^*\Db(X^{[2]})\rangle .\]
		Here, $\chi$ is the unique nontrivial character of $G = \bZ/2$. Note that the pushforward $\iota_* : \Db(E) \to \Db_{G}(\cZ_2)$ is taken with the trivial $G$-equivariant structure. As a consequence, there is a natural distinguished triangle of endofunctors on $\Db_{G}(\cZ_2)$ given by 
		\[q^*\left[(q_*(-))^G\right] \to \on{id} \to \iota_*((\iota^*(- \otimes \chi))_G) \otimes \chi \xrightarrow{\,+\,}\]
		where $(-)^G : \Db_G(X^{[2]}) \to \Db(X^{[2]})$ denotes taking $G$-invariants and $(-)_G : \Db_G(E) \to \Db(E)$ denotes taking $G$-coinvariants; these are respectively the left and right adjoints of the ``trivial representation functors'' $\Db(X^{[2]}) \to \Db_G(X^{[2]})$ and $\Db(E) \to \Db_G(E)$. \par 
		The proposition then follows from applying this distinguished triangle to the vector bundle $p_1^*\cE \oplus p_2^*\cE$, as 
		\[(q_*(p_1^*\cE \oplus p_2^*\cE))^G \simeq (\cE^{[2]} \oplus \cE^{[2]})^G \simeq \cE^{[2]},\]
		while 
		\[(\iota^*((p_1^*\cE \oplus p_2^*\cE) \otimes \chi))_G \simeq ((\pi^*\cE \oplus \pi^*\cE)\otimes\chi)_G \simeq \pi^*\cE.\]
	\end{proof}
	Note that the map $p_1^*\cE\oplus p_2^*\cE \to \iota_*\pi^*\cE \otimes \chi$ in the proposition is given by the map 
	\[(s_1,s_2)\mapsto s_1|_E - s_2|_E.\]
	\par 
	Let us take now a closed immersion $X \hookrightarrow Y$ of smooth varieties. It then induces a natural closed immersion $X^{[2]} \hookrightarrow Y^{[2]}$ by \cite[Tag 0B97]{stacks-project} which also arises as the $\bZ/2$-quotient of the natural map $\Bl_\Delta (X \times X) \to \Bl_\Delta (Y \times Y)$, and one can ask how the normal bundle $\cN_{X^{[2]}/Y^{[2]}}$ is controlled by the normal bundle $\cN_{X/Y}$. The answer is relatively simple: 
	\begin{proposition} \label{hilb2nmbdl}
		Suppose $X \subset Y$ is a smooth closed subvariety of a smooth variety. Then 
		\[\cN_{X^{[2]}/Y^{[2]}} \simeq \cN_{X/Y}^{[2]}.\]
	\end{proposition}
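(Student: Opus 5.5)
We will compute the normal bundle on the double cover $\cZ_2$ and then descend it along $q$, using the sequence of Proposition \ref{sequenceprop} for both $X$ and $Y$. Write $\tilde X = \Bl_{\Delta_X}(X\times X)$ and $\tilde Y = \Bl_{\Delta_Y}(Y \times Y)$, with quotient maps $q_X, q_Y$.

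\textbf{Step 1: $\tilde X$ is the strict transform of $X\times X$.} Since $\Delta_X = (X\times X)\cap \Delta_Y$ scheme-theoretically and the intersection is clean, the natural map $\tilde X \to \tilde Y$ identifies $\tilde X$ with the strict transform of $X \times X$. Moreover $E_X = \tilde X \cap E_Y$: concretely, $E_X = \bbP(T_X) \subset \bbP(T_Y|_X) = E_Y|_X$. The standard formula for normal bundles of strict transforms under a blowup along a clean intersection then gives
\[0 \to \cN_{\tilde X/\tilde Y} \to p_1^*\cN_{X/Y} \oplus p_2^*\cN_{X/Y} \to \iota_*\pi^*\cN_{X/Y} \to 0.\]
Here the second map is $(s_1,s_2)\mapsto (s_1 - s_2)|_{E_X}$. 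This is the step I expect to be the main obstacle: the formula has to be justified with care. I would first verify it locally. Choose coordinates in which $X \subset Y$ is a coordinate subspace $\{y=0\}$ in $(x,y)$. In the blowup chart with $u = x_1 - x_1'$ as exceptional equation, $\tilde X$ is cut out by $y' = 0$ and $(y - y')/u = 0$. Hence the conormal sheaf of $\tilde X$ is generated by $y'$ and $(y-y')/u$. This exhibits $\cN_{\tilde X/\tilde Y}$ as the subsheaf of $p_1^*\cN \oplus p_2^*\cN$ consisting of pairs whose difference vanishes on $E_X$, which is exactly the kernel of the map above.

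\textbf{Step 2: equivariance and comparison.} The involution on $\tilde Y$ preserves $\tilde X$, so $\cN_{\tilde X/\tilde Y}$ is naturally $\bZ/2$-equivariant. The local description is compatible with swapping the factors, with the sign on $s_1 - s_2$ contributing the character $\chi$ on the cokernel. Comparing with Proposition \ref{sequenceprop} applied to $\cE = \cN_{X/Y}$, which has the same middle term, the same map and the same twist $\chi$, both kernels agree. Hence we get an equivariant isomorphism
\[\cN_{\tilde X/\tilde Y} \simeq q_X^*\cN_{X/Y}^{[2]}.\]

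\textbf{Step 3: descent.} We have $\tilde X = q_Y^{-1}(X^{[2]})$. I would check this scheme-theoretically: $q_Y$ is flat, and locally the ideal of $X^{[2]}$ pulls back to the ideal of $\tilde X$, since the invariants $y+y'$ and $yy'$ together with the antiinvariant $(y-y')/u$ generate. It follows that $q_X^*\cN_{X^{[2]}/Y^{[2]}} \simeq \cN_{\tilde X/\tilde Y}$ equivariantly, with the canonical linearization on the pullback. Taking invariants of $q_{X*}$ of both sides recovers $\cN_{X^{[2]}/Y^{[2]}}$ and $\cN_{X/Y}^{[2]}$ respectively, because $(q_*q^*F)^G = F$ for the flat double cover. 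This gives the claimed isomorphism.
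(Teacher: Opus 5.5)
Your proposal is correct, and it reaches the key identification $\cN_{\tilde X/\tilde Y}\simeq q_X^*\cN_{X/Y}^{[2]}$ by a different route from the paper's.

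\textbf{How the two routes differ.} The paper never computes $\cN_{\tilde X/\tilde Y}$ directly. It applies Proposition \ref{sequenceprop} to $T_X$ and compares this with the blowup tangent sequence $0\to T_{\tilde X}\to p_1^*T_X\oplus p_2^*T_X\to \iota_*T_{E/X}(-1)\otimes\chi\to 0$. The snake lemma then gives $0\to q^*T_X^{[2]}\to T_{\tilde X}\to \iota_*\cO(-1)\otimes\chi\to 0$. Doing the same for $Y$, restricting to $\tilde X$, and matching the $\cO(-1)$ terms, the paper identifies the cokernel of $T_{\tilde X}\to T_{\tilde Y}|_{\tilde X}$ with $q^*\cN_{X/Y}^{[2]}$, using exactness of $(-)^{[2]}$.

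You instead exhibit $\cN_{\tilde X/\tilde Y}$ directly as the kernel of $(s_1,s_2)\mapsto (s_1-s_2)|_{E_X}$ on $p_1^*\cN_{X/Y}\oplus p_2^*\cN_{X/Y}$. You then apply Proposition \ref{sequenceprop} once, to $\cE=\cN_{X/Y}$ itself. The comparison becomes an equality of two equivariant subsheaves of one equivariant sheaf. This avoids the blowup tangent sequence, the double snake-lemma argument, exactness of $(-)^{[2]}$, and the restriction of the $Y$-sequence to $\tilde X$. The price is a local coordinate check. That check also proves the ideal-generation claim, which the paper only asserts. Your descent step (the cartesian square plus $(q_*q^*F)^{\bZ/2}=F$) is the same as the paper's closing argument.

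\textbf{A slip to fix in Step 3.} The function $(y-y')/u$ is \emph{invariant}, not anti-invariant, because both $y-y'$ and $u=x_1-x_1'$ change sign under the swap. This matters for the logic. Only invariant functions are pulled back from $Y^{[2]}$, so an anti-invariant generator could not show that the pullback of $I_{X^{[2]}}$ generates $I_{\tilde X}$.

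With the correct label the argument goes through, and $yy'$ is not needed. Since $y+y'=2y'+u\cdot\frac{y-y'}{u}$, the invariant functions $y+y'$ and $\frac{y-y'}{u}$ already generate $(y',\frac{y-y'}{u})=I_{\tilde X}$ in that $\bZ/2$-stable chart. Both vanish on $\tilde X$, so both lie in $I_{X^{[2]}}$.
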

	\begin{proof}
		Observe that we have the pullback square 
		\begin{center}
			\begin{tikzcd}
				\Bl_\Delta (X \times X) \arrow[r,hook] \arrow[d,"q"] & \Bl_\Delta (Y \times Y) \arrow[d] \\
				X^{[2]} \arrow[r,hook] & Y^{[2]}
			\end{tikzcd}
		\end{center}
		with vertical maps double coverings and horizontal maps closed immersions; this diagram is evidently $\bZ/2$-equivariant with respect to the fiberwise involution of the double coverings and the trivial actions on $X^{[2]}$ and $Y^{[2]}$. Since the pullback of the defining ideal of $X^{[2]}$ in $Y^{[2]}$ generates the ideal for $\Bl_\Delta (X \times X)$ in $\Bl_\Delta (Y \times Y)$, we have a natural $\bZ/2$-equivariant surjection of vector bundles 
		\[q^*\cN_{X^{[2]}/Y^{[2]}}^\vee \to \cN^\vee_{\Bl_\Delta (X \times X)/\Bl_\Delta (Y \times Y)},\]
		and as these vector bundles are of the same rank we have a $\bZ/2$-equivariant equivalence of normal bundles
		\[q^*\cN_{X^{[2]}/Y^{[2]}} \simeq \cN_{\Bl_\Delta (X \times X)/\Bl_\Delta (Y \times Y)}.\]
		Now let $p_1, p_2 : \Bl_\Delta (X \times X) \to X$ denote the composition of the blowup morphisms and the first and second projections, let $\iota : E_X \to \Bl_\Delta (X \times X)$ be the inclusion of the exceptional divisor, and let $\pi : E_X \simeq \bbP(T_X) \to X$ denote the natural projection. By abuse of notation, we use the same notation for the analogous maps for $Y$. \par 
		Observe also that we have the natural map of $\bZ/2$-equivariant short exact sequences 
		\begin{center}
			\begin{tikzcd}
				0 \arrow[r]  & q^*T_{X}^{[2]} \arrow[r] \arrow[d] & p_1^*T_X \oplus p_2^*T_X \arrow[r] \arrow[d,equal] &  \iota_*\pi^*T_X \otimes \chi \arrow[r] \arrow[d]& 0 \\
				0 \arrow[r] & T_{\Bl_\Delta (X \times X)} \arrow[r] & p_1^*T_X \oplus p_2^*T_X \arrow[r] & \iota_*T_{E/X}(-1) \otimes \chi \arrow[r] & 0
			\end{tikzcd}
		\end{center}
		where the top sequence follows from Proposition \ref{sequenceprop} and the bottom one follows from the general exact sequence on tangent bundles for a blowup of a smooth variety along a smooth subvariety. The rightmost vertical map here is the pushforward by $\iota$ of the tautological one arising from the Euler sequence for the projective bundle, twisted by $\chi$: 
		\[0 \to \cO(-1) \to \pi^*T_X \to T_{E/X}(-1) \to 0.\]
		Hence the snake lemma yields an induces short exact sequence by examining the left column: 
		\[0 \to q^*T_X^{[2]} \to T_{\Bl_\Delta (X \times X)} \to \iota_*\cO(-1) \otimes \chi \to 0.\]
		Repeating this argument for $Y$ allows us to extend this to the map of $\bZ/2$-equivariant short exact sequences 
		\begin{center}
			\begin{tikzcd}
				0 \arrow[r] & q^*T_X^{[2]}\arrow[r] \arrow[d] & T_{\Bl_\Delta (X \times X)} \arrow[r] \arrow[d] & \cO(-1) \otimes \chi \arrow[r] \arrow[d] & 0 \\
				0 \arrow[r] & q^*T_Y^{[2]}|_{\Bl_\Delta (X \times X)} \arrow[r] & T_{\Bl_\Delta (Y \times Y)}|_{\Bl_\Delta (X \times X)} \arrow[r] & \cO(-1) \otimes \chi \arrow[r] & 0,
			\end{tikzcd}
		\end{center}
		where the right arrow is an isomorphism; hence the cokernels of the left two maps are isomorphic, but by the exactness of $(-)^{[2]}$ this yields a $\bZ/2$-equivariant equivalence 
		\[q^*\cN_{X/Y}^{[2]} \simeq q^*\cN_{X^{[2]}/Y^{[2]}}.\]
		Since $q^*$ is a fully faithful embedding into the equivariant derived category of $\Bl_\Delta (X \times X)$, the result follows. 
	\end{proof}
	\subsection{Standard flips and birational geometry} 
	For this section, suppose that $X$ is a smooth projective variety admitting a subvariety $\iota : Z \hookrightarrow X$ which admits a morphism $\pi : Z \to F$ that realizes $Z \simeq \bbP_F(\cE)$ as the projectivization of a rank $r+1$ vector bundle $\cE$ on $F$. Suppose moreover that the normal bundle $\cN_{Z/X}$ is of the form 
	\[\cN_{Z/X} \simeq \cO_\pi(-1) \otimes \pi^*\cE'\]
	for $\cE'$ a rank $s+1$ vector bundle on $F$. \par 
	Let us write \[\phi : \tilde{X}\to X\] for the blow up of $X$ in $Z$. The exceptional divisor $E \subset \tilde{X}$ of the blowup then fits into the following diagram of projective bundles: 
	\begin{center}
		\begin{tikzcd}
			& E \arrow[rd] \arrow[ld] & \\
			Z \simeq \bbP_F(\cE) \arrow[rd, "\pi"']  & & Z' \simeq \bbP_F(\cE') \arrow[ld, "\pi'"] \\
			& F & 
		\end{tikzcd}
	\end{center}
	where $E \simeq \bbP_Z(\pi^*\cE') \simeq \bbP_{Z'}(\pi^*\cE)$ admits the structure of a projective bundle over both $Z$ and $Z'$, so that over a point $F$, $E$ admits the structure of a $\bbP^r \times \bbP^s$-bundle. It follows from a calculation that the normal bundle $\cN_{E/\tilde{X}} \simeq \cO_E(E)$ is given by $\cO(-1,-1)$. The idea is then to contract the divisor $E \subset \tilde{X}$ in the ``opposite direction'', to produce a space $X'$ containing $Z'$ which is isomorphic in codimension 1 to $X$. \par 
	The construction of the blowdown $\phi' : \tilde{X} \to X'$ will follow from the following contraction theorem. 
	\begin{theorem}[Fujiki--Nakano contraction theorem, \cite{monoidalI,monoidalII}]
		Let $\tilde{Y}$ be a smooth complex variety of dimension $\geq 3$ containing a divisor $D \subset \tilde{Y}$ such that we have a morphism $D\to M$ realizing $D$ as a $\bbP^s$-bundle. If for each $m \in M$ the restriction of $\cO(D)$ to the fiber $D_m \simeq \bbP^s$ is isomorphic to $\cO(-1)$ then there exists a smooth algebraic space $Y$ containing $M$ such that $\tilde{Y} \to Y$ is a blowdown contracting $D$ to $M \subset Y$. 
	\end{theorem}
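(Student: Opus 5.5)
The plan is to reduce the statement to Artin's contraction theorem for algebraic spaces: a proper algebraic space admits a contraction of a closed subspace as soon as the corresponding \emph{formal} contraction exists. The work therefore lies in constructing, along $M$, a formal scheme $\widehat{Y}$ together with a map from the formal completion $\widehat{\tilde{Y}}_D$ that is a formal blowup along $M$. Write $f : D \to M$ for the given $\bbP^s$-bundle and $I \subset \cO_{\tilde{Y}}$ for the ideal sheaf of $D$.

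First I would globalize the normal bundle. Since $\cO(D)|_{D_m} \simeq \cO(-1)$ on every fiber, $\cO_D(-D)$ is a line bundle of relative degree $1$. Hence $N' \coloneq f_*\cO_D(-D)$ is locally free of rank $s+1$, and $D \simeq \bbP_M(N')$ with $\cO_D(-D) = \cO_{\bbP(N')}(1)$, where $\bbP$ is taken in the paper's convention. It follows that $I^k/I^{k+1} \simeq \cO_D(-kD)$ restricts to $\cO(k)$ on each fiber, so
\[ f_*(I^k/I^{k+1}) \simeq \Sym^k N' \quad \text{and} \quad R^1f_*(I^k/I^{k+1}) = 0 \quad \text{for all } k \geq 0.\]
Pushing forward the sequences $0 \to I^k/I^{k+1} \to \cO/I^{k+1} \to \cO/I^k \to 0$ along the topological identification $D \to M$ then shows three things: the transition maps $f_*(\cO/I^{k+1}) \to f_*(\cO/I^k)$ are surjective, $R^1f_*(\cO/I^k)=0$, and $f_*\cO_D = \cO_M$.

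Next I would build the formal neighbourhood of $M$. Set $\widehat{\cO} \coloneq \varprojlim_k f_*(\cO_{\tilde{Y}}/I^k)$, a sheaf of rings on $M$. Locally on $M$, choose a basis $x_0,\dots,x_s$ of $N' = f_*(I/I^2)$ and lift it compatibly through the surjective tower to elements of $\widehat{\cO}$. Because the graded pieces are $\Sym^k N'$, the associated graded algebra of $\widehat{\cO}$ with respect to the filtration by the $f_*(I^k/\cdot)$ is the polynomial algebra $\Sym^\bullet N'$. A standard completeness argument then shows that $\widehat{\cO}$ is locally $\cO_M[[x_0,\dots,x_s]]$; the same argument is needed to lift $\cO_M$ itself into $\widehat{\cO}$, using the vanishing of $R^1$ for smoothness of the lift. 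So $\widehat{Y} = (M, \widehat{\cO})$ is a formally smooth formal scheme of dimension $\dim M + s + 1 = \dim \tilde{Y}$, with reduced subscheme $M$ cut out by $J = (x_0,\dots,x_s)$. This produces a morphism $\widehat{\tilde{Y}}_D \to \widehat{Y}$.

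I would then identify this morphism with the blowup of $\widehat{Y}$ along $J$. The $x_i$ restrict on $D$ to sections of $\cO_D(-D)$ that generate it fiberwise, since $\cO(1)$ on $\bbP^s$ is generated by the basis of $N'$. Hence $J\cdot\cO_{\widehat{\tilde Y}} = I$ is invertible, and by the universal property we get a map $\widehat{\tilde Y}_D \to \Bl_J\widehat{Y}$. Comparing the two sides order by order, both have the same graded pieces $\cO_{\bbP(N')}(k)$ along the exceptional divisor, which should show that this map is an isomorphism of formal schemes. Artin's theorem then gives an algebraic space $Y \supset M$ and a proper map $\tilde{Y} \to Y$ that is an isomorphism off $D$ and contracts $D$ to $M$ via $f$. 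Since the completion of $Y$ along $M$ is $\widehat{Y}$, which is regular, $Y$ is smooth along $M$, and hence smooth everywhere; moreover $\tilde Y \simeq \Bl_M Y$.

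I expect the main obstacle to be the middle step, where $\widehat{\cO}$ is shown to be a power series ring and the formal blowup identification is made. This is where the vanishing $R^1f_*\cO(k) = 0$ must be used carefully to lift coordinates, and where one must check that the lifted $x_i$ actually generate $I$ to all orders rather than only modulo $I^2$.
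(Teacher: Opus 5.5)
\textbf{Comparison with the paper.} The paper gives no proof of this statement. It cites Fujiki--Nakano, whose theorem is complex-analytic and produces the contraction as a complex manifold. The algebraic-space conclusion as stated therefore tacitly needs an extra algebraization step: via Moishezon spaces when $\tilde{Y}$ is proper, or via Artin in general. You go directly through Artin's existence theorem for modifications. You write down the formal contraction $\widehat{Y}=(M,\varprojlim f_*(\cO/I^k))$ by hand, identify $\widehat{\tilde{Y}}_D$ with its completed blowup along $M$, and let Artin algebraize. This gives an algebraic space directly and is characteristic-free, at the price of invoking a deep algebraization theorem. The analytic argument, by contrast, also covers non-algebraic complex manifolds. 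The route is sound, but three points need tightening.

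\textbf{Points to tighten.}

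First, smoothness of $M$. Over an affine $V\subset M$, the ring lift $\cO_M\to\widehat{\cO}$ comes from formal smoothness of $\Gamma(V,\cO_M)$, applied to the square-zero surjections $\Gamma(f^{-1}V,\cO/I^{k+1})\to\Gamma(f^{-1}V,\cO/I^{k})$. The $R^1$-vanishing only gives the surjectivity. So you are using that $M$ (equivalently $D$) is smooth, and you should say so. This is part of the Fujiki--Nakano hypotheses and holds in the paper's application, where $M=Z'=\bbP_F(\cE')$. It is also what makes $\widehat{Y}$ regular at the end.

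Second, the hypotheses of Artin's theorem. It does not need $\tilde{Y}$ to be proper, so drop that word; $\tilde{Y}$ is not assumed proper here. What it needs is that $\widehat{\tilde{Y}}_D\to\widehat{Y}$ be a formal modification. This holds for the completion of a blowup along an ideal of definition, and it can be checked locally, where $\widehat{Y}\simeq\on{Spf}\Gamma(V,\cO_M)[[x_0,\dots,x_s]]$.

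Third, the step you flag as the main obstacle is routine.
\begin{enumerate}[(a)]
\item Since $I$ is invertible and the images of the $x_i$ generate $I/I^2=\cO_D(-D)$, Nakayama shows that the $x_i$ generate $I$.
\item The induced map to the completed blowup restricts to the isomorphism $D\simeq\bbP(N')$ on exceptional divisors.
\item It pulls the ideal of the exceptional divisor back to $I$. Hence it induces surjections, and therefore isomorphisms, between the line bundles on all graded pieces.
\item Induction on the order then gives the isomorphism of formal schemes.
\end{enumerate}

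A minor notational point: with $f_*\cO_D(-D)=N'$, $D$ is $\bbP(N')$ in Grothendieck's quotient convention, not in the paper's convention of lines. Here $N'$ is the conormal bundle of $M$.
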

	Indeed, applying the above result to $\tilde{X}$ using that $E \to Z'$ is a projective bundle satisfying the necessary condition on the restriction of $\cO(E)$ shows that there exists a smooth and proper algebraic space $X' \supset Z'$ whose blowup in $Z'$ is $\tilde{X}$. \par 
	\begin{defn}
		For any smooth projective variety $X$ with a closed subvariety $Z \subset X$ which admits the structure of a projective bundle $\pi : Z \simeq \bbP(\cE)\to F$ over some other variety $F$ such we have an isomorphism $\cN_{Z/X} \simeq \cO_\pi(-1) \otimes \pi^*\cE'$, we call the birational map $X \longdashleftrightarrow X'$ (as well as the resulting diagram between $X, X', \Bl_{Z} X \simeq \Bl_{Z'} X'$ and their closed subspaces $Z, Z'$ and $E$) a \textbf{standard flip}. If $r + 1 = \on{rank} \cE = \on{rank} \cE' = s + 1$ then we say it is a \textbf{standard flop}. 
	\end{defn}
	Under nice situations, the arguments of \cite{projectiveflop} show that the contraction $X'$ as constructed above is a projective variety rather than just an algebraic space. Note that \cite{projectiveflop} was phrased in the case of flops; for the sake of completeness, we include their arguments here in the context of flips. However we expect that the statement in this generality is well-known to the experts. 
	\begin{proposition}[Proposition 1.3 of \cite{projectiveflop}]
		\label{contractionprop}
		Take $X$ as above in this section. Suppose moreover that there exists a birational map $g : X \to \bar{X}$ to a projective variety $\bar{X}$ containing $F$ which contracts $Z \subset X$ to $F \subset \bar{X}$. Then in fact the contraction $X'$ of $\tilde{X}$ above exists as a smooth projective variety. 
	\end{proposition}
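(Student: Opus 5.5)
The plan is to follow the strategy of \cite{projectiveflop}. We produce an ample line bundle on $\tilde{X}$ that is trivial on the fibers of $E \to Z'$. We then realize $X'$ as the normalization of the image of $\tilde{X}$ under a suitable multiple of that bundle, or equivalently as a relative Proj over $\bar{X}$.

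First, let $\phi : \tilde{X} \to X$ be the blowup and $\phi' : \tilde{X} \to X'$ the Fujiki--Nakano contraction, so $X'$ is a smooth proper algebraic space. The composite $\bar{g} = g \circ \phi : \tilde{X} \to \bar{X}$ contracts $E$ onto $F$. It is constant on every fiber of $\phi'$, because such a fiber is a $\bbP^r$-fiber of $E \to Z'$, lying over a single point of $F$. Since $X'$ is normal and $\phi'$ has connected fibers with $\phi'_*\cO_{\tilde{X}} = \cO_{X'}$, the rigidity lemma shows that $\bar{g}$ factors as $g' \circ \phi'$ for a morphism $g' : X' \to \bar{X}$.

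Next, we show $g'$ is projective by exhibiting a $g'$-ample line bundle. Take $L = \phi^*A - E$ on $\tilde{X}$, where $A$ is ample on $X$. Note that $-E$ restricts to $\cO(1,1)$ on the $\bbP^r \times \bbP^s$ fibers of $E$ over $F$. Pick a line bundle $M$ on $F$, or rather on $X$ via $\phi$, and twist so that $L' = \phi^*(\ell A) + \bar{g}^*B - mE$ has degree $0$ on the $\bbP^r$-fibers of $E \to Z'$. Here $B$ is ample on $\bar{X}$. The restriction of $\phi^*A$ to a $\bbP^r$-fiber is $\cO(a)$ with $a>0$, since it maps isomorphically onto a fiber of $Z \to F$. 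So the condition on the fibers reads $\ell a - m = 0$. By the standard descent criterion for blowdowns of smooth centers, $L'$ is then the pullback of a line bundle $L''$ on $X'$. We then check that $L''$ is $g'$-ample via the relative Nakai--Moishezon/Kleiman criterion. The fibers of $g'$ are points away from $F$. Over a point $f \in F$ the fiber is $\bbP^s = \bbP(\cE'_f)$, and $L''$ restricts to $\cO(m)$ there, because $-mE$ restricted to the $\bbP^s$ direction is $\cO(m)$ and $\phi^*A$ is trivial on that direction. Hence $L''$ is $g'$-ample. Then $L'' + N g'^*B$ is ample for $N \gg 0$, and $X'$ is a projective scheme, smooth since it was already a smooth algebraic space.

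The main obstacle is making the descent of $L'$ to $X'$ and the relative ampleness rigorous on a mere algebraic space. I would handle the descent via $\phi'_*L'$: it is invertible because $L'$ is trivial on the fibers, and $R^1$ of $\cO_{\bbP^r}$ vanishes. For ampleness I would use that properness plus a relatively ample line bundle over the projective $\bar{X}$ forces $X' \simeq \on{Proj}_{\bar{X}} \bigoplus_k g'_*L''^{k}$, which is projective.
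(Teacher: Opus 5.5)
Your approach has a sign error that, as written, breaks the key step. Since $\cO_E(E)\simeq\cO(-1,-1)$, a line $\tilde\ell$ in a $\bbP^r$-fiber of $E\to Z'$ satisfies $E\cdot\tilde\ell=-1$. So for $L'=\ell\,\phi^*A+\bar g^*B-mE$ you get $L'\cdot\tilde\ell=\ell a+m$, not $\ell a-m$. Descent therefore forces $m=-\ell a$.

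Now let $\tilde m$ be a line in a $\bbP^s$-fiber of $E\to Z$, i.e.\ of $\phi$. The degree of $L''$ on a line of $Z'_f=\bbP(\cE'_f)$ equals $L'\cdot\tilde m=m$, which is then $-\ell a<0$ when $\ell>0$. With $A$ ample and $\ell>0$ there are two possibilities:
\begin{itemize}
\item if $m>0$, then $L'\cdot\tilde\ell>0$, so $L'$ is not pulled back from $X'$;
\item if $m=-\ell a$, then $L''$ is $g'$-antiample.
\end{itemize}
So the two conditions you impose cannot both hold.

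The repair is immediate: take $\ell<0$, e.g.\ $L'=\bar g^*B-(\phi^*A+aE)$. This has degree $0$ on $\tilde\ell$ and degree $a>0$ on $\tilde m$; equivalently, replace $L''$ by $L''^{-1}$. The negativity of $-\phi^*A$ on curves not contracted by $\bar g$ is harmless, because you only need ampleness relative to $\bar X$. This relies on $g$ being an isomorphism away from $Z$, so that the fibers of $g'$ off $F$ are points; both you and the paper read ``contracts $Z$ to $F$'' this way. This corrected $L'$ is exactly the divisor $k\phi^*g^*D-(\phi^*H+cE)$ used in the paper's proof.

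A smaller point concerns the descent step. $H^1(\bbP^r,\cO)=0$ alone is not enough, since $\phi'$ is not flat. You need $H^1(\bbP^r,\cO(k))=0$ for all $k\ge 0$ on the infinitesimal neighborhoods of $E$, using $\cO(-E)|_{\bbP^r}\simeq\cO(1)$. More simply, use $\Pic\tilde X=\phi'^*\Pic X'\oplus\bbZ E$, which holds because $X'$ is smooth and $Z'$ has codimension $r+1\ge 2$.

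With this correction your route is sound and genuinely different from the paper's. The paper uses that divisor only as a supporting function. It computes $K_{\tilde X}\cdot\tilde\ell=-r$, argues that $\mathbb{R}_{\ge 0}[\tilde\ell]$ is a $K_{\tilde X}$-negative extremal ray of $\overline{NE}(\tilde X)$, and invokes Mori's contraction theorem to get a normal projective contraction. It then appeals to Fujiki--Nakano for smoothness.

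You instead start from the Fujiki--Nakano space $X'$, factor $\bar g$ through it, descend the divisor, and prove it is $g'$-ample over the projective $\bar X$. This bypasses the cone and contraction theorems and the canonical class computation. It also makes explicit that the resulting projective variety \emph{is} the Fujiki--Nakano contraction, which the paper leaves implicit. The cost is working with algebraic spaces. You need three facts in that setting: the factorization through $\phi'$, the descent of line bundles, and the fiberwise criterion for relative ampleness of proper morphisms, which also gives that $g'$ is representable.
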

	\begin{proof}
		The key idea is to show that the lines within the fibers of the projective bundle $E \to Z'$ (which are all in the same numerical class on $\tilde{X}$) are all $K_{\tilde{X}}$-negative extremal rays in $\bar{NE}(\tilde{X})$; by Mori's contraction theorem as in \cite{kollarmori}, it then follows that the contraction $X'$ of this curve class is a normal projective variety. The smoothness of $X'$ then follows from the Fujiki--Nakano contraction above. \par 
		We first check $K_{\tilde{X}}$-negativity. First suppose that $\tilde{\ell} \subset E \subset \tilde{X}$ is a line contracted by the map $E \to Z'$. Let $\ell \subset Z \subset X$ denote its image via the blow-up morphism, and observe that $\ell$ is contracted to a point by $\pi : Z \to F$. Now by the conormal sequence on $X$, one has 
		\[c_1(\cN_{Z/X}) = K_Z - K_X|_Z,\]
		so 
		\[(K_X.\ell) = (K_Z.\ell) - (c_1(\cN_{Z/X}).\ell) = (-r+1)-(-s+1) = s-r,\]
		using that on the fibers of $\pi$, $K_Z$ restricts to $\cO(-r+1)$ and $\cN_{Z/X}$ restricts to $\cO(-1)^{s+1}$. Hence by the projection formula and the identity $K_{\tilde{X}} = \phi^*K_X + sE$,
		\[(K_{\tilde{X}}.\tilde{\ell}) = (K_X.\ell) + s(E.\tilde{\ell}) = s - r - s = -r < 0.\]
		We now check that the class of $\tilde{\ell}$ is extremal in $\bar{NE}(\tilde{X})$. Here we apply the existence of the contraction map $g : X \to \bar{X}$, where $\bar{X}$ is projective. Suppose that $D$ is an ample line bundle on $\bar{X}$, and observe that $g^*D$ is a divisor supporting the curve $\ell$ on $X$. Suppose $H$ is ample on $X$, and let $c = (H.\tilde{\ell})$. Then for $k \gg 0$, the divisor 
		\[k\phi^*g^*D - (\phi^*H+cE)\]
		is big and nef and supports the curve $\tilde{\ell}$. It follows that the curve class corresponding to $\tilde{\ell}$ is extremal: if $[c_1] + [c_2] = [\tilde{\ell}]$ then $[c_1], [c_2]$ intersect trivially with the divisor supporting $\tilde{\ell}$ above, so they must lie in the ray of $\bar{NE}(\tilde{X})$ spanned by $[\tilde{\ell}]$. 
	\end{proof}
	One nice feature of these standard flips is that they induce decompositions of the invariants of $X$ in terms of $X'$, or vice versa. In particular, they induce natural semiorthogonal decompositions of derived categories. 
	\begin{theorem}[\!\!\cite{bondalorlov},\cite{cubicflip}]
		Suppose that $s \leq r$. Then we have a semiorthogonal decomposition 
		\begin{equation}
			\Db(X) = \langle \Phi_{-r+s}(\Db(F)), \cdots, \Phi_{-1}(\Db(F)), R\phi_* \circ L\phi'^*\, \Db(X')\rangle,
		\end{equation}
		where the functors $\Phi_m = \iota_*(\pi^*(-) \otimes \cO_\pi(m)): \Db(F) \to \Db(X)$ and $R\phi_* \circ L\phi'^* : \Db(X') \to \Db(X)$ are all fully faithful. 
	\end{theorem}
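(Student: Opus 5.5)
The plan is to pass to the common blowup $\tilde{X} \simeq \Bl_Z X \simeq \Bl_{Z'} X'$ and compare the two Orlov blowup decompositions of $\Db(\tilde{X})$. Throughout, write $p : E \to Z$ and $p' : E \to Z'$ for the two projective bundle structures on the exceptional divisor. I will use three facts repeatedly:
\begin{itemize}
\item $E \simeq Z \times_F Z'$;
\item $\cO_E(E) \simeq \cO(-1,-1)$;
\item $K_{\tilde{X}} = \phi^*K_X + sE = \phi'^*K_{X'} + rE$, so that $\phi^! = L\phi^*(-) \otimes \cO(sE)$ and similarly for $\phi'$ with $r$.
\end{itemize}

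\textbf{Fully faithfulness and semiorthogonality among the $\Phi_m$.} For the functors $\Phi_m$ I will use the standard Bondal--Orlov computation. For $A, B \in \Db(F)$ there is the local-to-global spectral sequence
\[
\Hom_X(\iota_*P,\iota_*Q) \Leftarrow \Hom_Z\bigl(P, Q \otimes \textstyle\bigwedge^k \cN_{Z/X}[-k]\bigr).
\]
Here $\bigwedge^k \cN_{Z/X} \simeq \cO_\pi(-k) \otimes \pi^*\bigwedge^k \cE'$ for $0 \le k \le s+1$. Take $P = \pi^*A(m)$ and $Q = \pi^*B(m')$ with $-r+s \le m, m' \le -1$. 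Each $E_2$ term then reduces via $\pi_*$ to $\pi_*\cO_\pi(m'-m-k)$. For $k \ge 1$ or $m' < m$ the exponent lies in $[-r,-1]$, where $\pi_*$ vanishes; the constraint $m - m' \le r-s-1$ is exactly what guarantees this for $k \le s+1$. So only the $k=0$, $m=m'$ term survives, and it gives $\Hom_F(A,B)$. This yields both the fully faithfulness of each $\Phi_m$ and the semiorthogonality among them.

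\textbf{Fully faithfulness of $R\phi_*L\phi'^*$ and orthogonality to the $\Phi_m$.} By adjunction,
\[
\Hom\bigl(\phi_*\phi'^*A, \phi_*\phi'^*B\bigr) = \Hom\bigl(\phi'^*A, \phi^*\phi_*\phi'^*B \otimes \cO(sE)\bigr).
\]
I will show that the natural map $\phi'^*B \to \phi^!\phi_*\phi'^*B$ has cone supported on $E$ and built from objects $p^*(-)\otimes\cO(j,\ast)$ with twist along $Z'$-fibers in a range killed by $p'_*$ after tensoring with $\phi'^*A$. Concretely, I would filter $\cO(sE)/\cO$ by $\cO_E(jE)=\cO(-j,-j)$ for $1\le j\le s$, and use that $p'_*\cO(-j,-j)$ restricted to $\bbP^r$-fibers vanishes since $1 \le j \le s \le r$. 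The same vanishing, together with $\iota^*$ and the projection formula, gives
\[
\Hom\bigl(\Phi_m B, \phi_*\phi'^*A\bigr) = \Hom\bigl(\phi^*\Phi_m B, \phi'^*A \otimes \ldots\bigr) = 0
\]
for $m$ in the stated range. Equivalently, $\phi'^*A$ restricted to $E$ is pulled back from $Z'$, so its pairing with $\cO_\pi(m)$-twists pulled back from $Z$ vanishes on $\bbP^r\times\bbP^s$ fibers by Künneth.

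\textbf{Fullness (the main obstacle).} The hardest step is showing the listed components generate $\Db(X)$. My approach is to write Orlov's formula in two ways:
\begin{align*}
\Db(\tilde{X}) &= \bigl\langle \Db(Z)_{-s},\dots,\Db(Z)_{-1}, L\phi^*\Db(X)\bigr\rangle,\\
\Db(\tilde{X}) &= \bigl\langle \Db(Z')_{-r},\dots,\Db(Z')_{-1}, L\phi'^*\Db(X')\bigr\rangle,
\end{align*}
where $\Db(Z)_{j}$ denotes the image of $\Db(Z)$ under $\iota_*(p^*(-)\otimes\cO_E(jE))$, and $\Db(Z')_{j}$ is defined analogously with $p'$. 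I will then expand each $\Db(Z)$ and $\Db(Z')$ into copies of $\Db(F)$ via the projective bundle formula. This gives $s(r+1)$ copies of $\Db(F)$ in the first decomposition and $r(s+1)$ copies in the second.

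I would then perform the Bondal--Orlov sequence of mutations, passing the pieces $\cO(a,b)\otimes\Db(F)$ on $E$ across one another using $\cO_E(E)=\cO(-1,-1)$. The goal is to match the two collections up to the common $rs$ copies. The remaining $r-s$ copies, pushed down by $\phi_*$, are the $\Phi_m(\Db(F))$. Since $\phi_*$ kills the subcategories $\Db(Z)_j$ coming from $X$'s blowup and restricts to an equivalence on $L\phi^*\Db(X)$, applying $\phi_*$ to the rearranged decomposition then exhibits $\Db(X)$ as generated by the $\Phi_m(\Db(F))$ and $\phi_*\phi'^*\Db(X')$. Keeping track of the exact twists in these mutations is where I expect the real work to lie.
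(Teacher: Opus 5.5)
The paper gives no proof of this statement; it quotes Bondal--Orlov (for $F$ a point) and Theorem~A of Belmans--Fu--Raedschelders. Your framework (work on $\tilde X$, apply Orlov's formula to $\phi$ and $\phi'$, expand into copies of $\Db(F)$) is the natural one. Your first step, full faithfulness and mutual orthogonality of the $\Phi_m$ via the local-to-global spectral sequence, is correct. The remaining steps have genuine gaps.

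\textbf{Orthogonality is checked in the wrong direction.} For the displayed order you need $\Hom^\bullet(\phi_*\phi'^*A,\Phi_mB)=0$. You instead assert $\Hom^\bullet(\Phi_mB,\phi_*\phi'^*A)=0$, which is false in general. Take $F=\mathrm{pt}$, $s=0$, $r=1$: a $(-1)$-curve $Z$ on a surface, with $\phi=\mathrm{id}$ and $\phi'$ the contraction. Then $\Hom^\bullet(\iota_*\cO_Z(-1),\cO_X)=\Hom^\bullet(\cO_Z(-1),\cO_Z(Z)[-1])\neq0$. The correct direction is $\Hom^\bullet(\phi'^*A,\phi^!\iota_*C)$ with $C=\pi^*B\otimes\cO_\pi(m)$. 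Here $\phi^!\iota_*C=L\phi^*\iota_*C\otimes\cO(sE)$ is not $j_*p^*C$ (with $j\colon E\hookrightarrow\tilde X$), because the blowup square is not Tor-independent. Its cohomology sheaves are $j_*(p^*C\otimes\bigwedge^k\cQ^\vee)\otimes\cO(sE)$ for $0\le k\le s$, where $\cQ=p^*\cN_{Z/X}/\cO_E(E)$ is the excess bundle. The vanishing of the $k\ge1$ terms uses the twist $\cO(sE)|_E=\cO(-s,-s)$: the degree along the $\bbP^r$-fibres becomes $m+k-s\in[-r,-1]$. Your K\"unneth remark only sees the $k=0$ term.

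\textbf{Full faithfulness of $R\phi_*L\phi'^*$ conflates $\phi^*\phi_*\phi'^*B$ with $\phi'^*B$.} The cone of $\phi'^*B\to\phi^!\phi_*\phi'^*B$ is a shift of the $\ker\phi_*$-component of $\phi'^*B$ in $\Db(\tilde X)=\langle\phi^!\Db(X),\ker\phi_*\rangle$. Filtering $\cO(sE)/\cO$ only computes the cone of $\phi'^*B\to\phi'^*B(sE)$. That would require $L\phi'^*\Db(X')\subset L\phi^*\Db(X)$, which fails once $s\ge1$: objects of $L\phi^*\Db(X)$ restrict to $E$ as pullbacks from $Z$, whereas $\phi'^*B|_E=p'^*(B|_{Z'})$. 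Write $q\colon E\to F$. Then $\ker\phi_*$ is generated by $j_*(q^*\Db(F)\otimes\cO(b,k))$ with $-s\le k\le-1$ and \emph{all} $b$. Only those with $-r\le b\le-1$ are right orthogonal to $L\phi'^*\Db(X')$. The real content is showing that the component lands in that range, and this is missing.

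\textbf{The fullness step is the whole theorem, not the last step.} Suppose you produce $\Db(\tilde X)=\langle\ker\phi_*,\mathcal C_{-r+s},\dots,\mathcal C_{-1},L\phi'^*\Db(X')\rangle$ with $\phi_*\mathcal C_m=\Phi_m(\Db(F))$. Then $\phi_*$ is an equivalence from ${}^\perp\ker\phi_*=L\phi^*\Db(X)$ onto $\Db(X)$, so applying it gives full faithfulness, semiorthogonality and generation at once; your second step becomes unnecessary. You leave exactly this computation undone, and your hint is insufficient for two reasons:
\begin{enumerate}
\item In $\tilde X$, $\Hom(j_*M,j_*N)$ also involves $\Hom_E(M,N\otimes\cO_E(E)[-1])$, so pieces cannot be permuted by computing on $E$ alone.
\item Pieces must also be moved across $L\phi^*\Db(X)$ and $L\phi'^*\Db(X')$, via Serre-functor rotations using $\omega_{\tilde X}=\phi^*\omega_X(sE)=\phi'^*\omega_{X'}(rE)$.
\end{enumerate}
Your displayed Orlov decompositions also have the wrong sign. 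Since $\cO_E(E)=\cO_p(-1)$, $L\phi^*\Db(X)$ lies to the right of $j_*(p^*\Db(Z)\otimes\cO_E(jE))$ only for $1\le j\le s$. For $j=-1$ one gets $\phi_*j_*(p^*C\otimes\cO_E(-E))=\iota_*(C\otimes\cN_{Z/X}^\vee)\neq0$.
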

	This should be interpreted as a strengthening of one case of the more general \textit{DK conjecture} of Bondal--Orlov \cite{bondalorlov} and Kawamata \cite{dkconj} stating that any flip, not just a standard flip, should give rise to a fully faithful embedding of derived categories.
	\begin{remark}
		Though we choose not to focus on this here, one can check easily that standard flips also induce similar decompositions on the level of the Grothendieck ring of varieties. It has also been shown that they induce decompositions on the level of integral Chow motives \cite{chowmotives}.
	\end{remark}
	\section{Flips via the Nakano contraction theorem} \label{contractionflipssection}
	Let $(X,H)$ be a del Pezzo variety of dimension $n\geq 2$ such that $\cO(H)$ is very ample. We first require a more basic lemma on the normal bundle for any line in $X$. 
	\begin{proposition}
		\label{splittingprop1}
		For any line $\ell \subset X \subset \bbP(H^0(\cO(H))^\vee) \simeq \bbP^N$, we have either 
		\[\mathcal{N}_{\ell/X} \simeq \begin{cases}
			\mathcal{O}^{\oplus 2} \oplus \mathcal{O}(1)^{\oplus(n-3)}\\ 
			\mathcal{O}(-1) \oplus \mathcal{O}(1)^{\oplus(n-2)}.
		\end{cases}\]
		If $n = 2$, then only the second case occurs. 
	\end{proposition}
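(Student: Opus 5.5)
We argue via degrees and the normal bundle sequences for $\ell \subset X \subset \bbP^N$. Since $\ell \simeq \bbP^1$, Grothendieck's theorem gives $\cN_{\ell/X} \simeq \bigoplus_{i=1}^{n-1} \cO(a_i)$ with $a_1 \le \dots \le a_{n-1}$. The tangent sequence $0 \to T_\ell \to T_X|_\ell \to \cN_{\ell/X} \to 0$ together with $-K_X = (n-1)H$ and $(H.\ell) = 1$ gives
\[\deg \cN_{\ell/X} = (n-1) - 2 = n-3, \qquad \text{i.e.} \qquad \sum_i a_i = n-3.\]

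The next step bounds each $a_i$ from above. The inclusion $\cN_{\ell/X} \hookrightarrow \cN_{\ell/\bbP^N} \simeq \cO(1)^{\oplus(N-1)}$ comes from the normal bundle sequence of $\ell \subset X \subset \bbP^N$. Since $X$ is smooth, $\cN_{\ell/X}$ is a subbundle, so every $\cO(a_i)$ maps nontrivially into $\cO(1)^{\oplus (N-1)}$, forcing $a_i \le 1$ for all $i$.

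Next we bound each $a_i$ from below. The tangent sequence gives a surjection $T_X|_\ell \to \cN_{\ell/X}$, so a lower bound on the summands of $T_X|_\ell$ would suffice, but we have no a priori such bound. Instead we count. Write $b = \#\{i : a_i \le 0\}$; then
\[n-3 = \sum a_i \le (n-1-b) + \sum_{a_i\le 0} a_i,\]
so $\sum_{a_i \le 0} a_i \ge b-2$. Each such term is at most $0$, so $b - 2 \le 0$, giving $b \le 2$.
\begin{itemize}
\item If $b = 2$, both nonpositive terms must sum to $0$, hence both are $0$. This yields $\cO^{\oplus 2} \oplus \cO(1)^{\oplus(n-3)}$.
\item If $b = 1$, the single nonpositive term is $\ge -1$ and must be $\le 0$. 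Equality in the degree count forces it to equal $-1$, giving $\cO(-1)\oplus\cO(1)^{\oplus(n-2)}$.
\item If $b = 0$, all $a_i = 1$, so the degree is $n-1 \ne n-3$, which is impossible.
\end{itemize}
More carefully, the total must be exactly $n-3$, so $\sum_{a_i\le0} a_i = b-2$ exactly, given that the remaining $a_i$ equal $1$. The remaining $a_i$ must indeed equal $1$: if some positive $a_i$ were smaller, it would not be positive. So the accounting is exact and only the two listed cases survive.

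Finally, for $n = 2$ the bundle $\cN_{\ell/X}$ has rank $1$ and degree $-1$, so it is $\cO(-1)$; this is the second case, and the first case is impossible there. The main point needing care is the subbundle claim giving $a_i \le 1$, which uses smoothness of $X$ and the embedding by $|H|$ (very ampleness). Everything else is a direct degree count.
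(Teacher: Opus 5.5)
Your proof is correct and is essentially the paper's argument: $a_i \le 1$ comes from $\cN_{\ell/X} \hookrightarrow \cN_{\ell/\bbP^N} \simeq \cO(1)^{\oplus(N-1)}$, and $\sum a_i = n-3$ comes from the paper's computation $\det \cN_{\ell/X} \simeq \omega_\ell \otimes \omega_X^\vee|_\ell$, which is equivalent to your tangent-sequence degree count. Your case analysis on $b$ just makes explicit what the paper calls immediate, and it could be shortened by noting that the nonnegative integers $1 - a_i$ sum to $2$.
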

	\begin{proof}
		We have a canonical exact sequence 
		\[0 \to \cN_{\ell/X} \to \cN_{\ell/\bbP^N} \to \cN_{X/\bbP^N}|_\ell \to 0,\]
		but one observes that, $\ell$ is a line in $\bbP^N$ we have $\mathcal{N}_{\ell/\bbP^N} \simeq \mathcal{O}(1)^{\oplus(N-1)}$. Since $\cN_{\ell/X}$ is a vector bundle on $\bbP^1$, it splits, so we may write $\cN_{\ell/X} \simeq \bigoplus_{i=1}^{n-1}\cO_\ell(a_i)$, and the exact sequence above implies that $a_i \leq 1$. \par 
		Now since the inclusion $\ell \subset X$ is a local complete intersection, it follows that 
		\begin{align}
			\det \cN_{\ell/X} &\simeq \omega_{\ell} \otimes \omega_X^\vee \nonumber\\
			&\simeq \cO_\ell(-2) \otimes \cO_X((n-1)H)|_\ell \\
			&\simeq \cO_\ell(n-3). \nonumber
		\end{align}
		It follows that $\sum_{i=1}^{n-1} a_i = n-3$, with $a_i \leq 1$. The proposition follows immediately from these two conditions. 
	\end{proof}
	\begin{corollary}
		Under the hypotheses of this section, the Fano scheme of lines $F_1(X)$ is smooth of dimension $2n-4$.
	\end{corollary}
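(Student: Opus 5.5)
The plan is to use the standard deformation theory of the Hilbert scheme at a point $[\ell] \in F_1(X)$. The Zariski tangent space of $F_1(X)$ at $[\ell]$ is $H^0(\ell, \cN_{\ell/X})$. Obstructions to deforming $\ell$ inside $X$ lie in $H^1(\ell, \cN_{\ell/X})$. Since $\ell \simeq \bbP^1$ is a local complete intersection in the smooth variety $X$, the normal sheaf is a genuine vector bundle, so these groups are computed directly from the splitting type.

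I would then invoke Proposition \ref{splittingprop1}. In either case every summand of $\cN_{\ell/X}$ is $\cO(a)$ with $a \geq -1$, so $H^1(\ell, \cN_{\ell/X}) = 0$. This vanishing gives that $F_1(X)$ is smooth at $[\ell]$, of dimension $h^0(\cN_{\ell/X})$. By Riemann--Roch on $\bbP^1$, with $\on{rank} \cN_{\ell/X} = n-1$ and $\deg \cN_{\ell/X} = n-3$ as computed in the proof of that proposition, we get
\[
h^0(\cN_{\ell/X}) = \chi(\cN_{\ell/X}) = \deg \cN_{\ell/X} + \on{rank} \cN_{\ell/X} = (n-3) + (n-1) = 2n-4 .
\]
As a check, the two splitting types give $2 + 2(n-3)$ and $0 + 2(n-2)$ respectively, both equal to $2n-4$.

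Since this holds at every point, $F_1(X)$ is smooth of pure dimension $2n-4$ wherever it is nonempty. The only subtle point is the standard fact that vanishing of $H^1$ of the normal bundle implies smoothness of the Hilbert scheme at that point. Here this fact applies without modification because $X$ is smooth and projective and $\ell \subset X$ is a regular embedding. I do not expect a real obstacle, beyond noting that the statement is meant componentwise and does not assert nonemptiness or connectedness.
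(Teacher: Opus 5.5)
Your proposal is correct and follows the paper's own proof: $H^1(\cN_{\ell/X})=0$ from the splitting types in Proposition \ref{splittingprop1}, the standard tangent--obstruction theory for Hilbert schemes, and the count $h^0(\cN_{\ell/X}) = 2n-4$. Your Riemann--Roch computation is simply a uniform version of the paper's ``easy calculation'' of $h^0$.
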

	\begin{proof}
		This is an immediate consequence of the vanishing of $H^1(\cN_{\ell/X}) = 0$ along with general results on tangent spaces to Hilbert schemes along with an easy calculation that $\dim H^0(\cN_{\ell/X}) = 2n-4$. 
	\end{proof}
	Under the conditions of Proposition \ref{splittingprop1}, it is then easy to check that this induces certain splittings on the normal bundle of $\ell^{[2]}$ in $X^{[2]}$. This is a consequence of splittings for tautological bundles on $\ell^{[2]} \simeq \bbP^2$. 
	\begin{proposition}
		We have the following splittings on $\ell^{[2]} = \bbP^2$: 
		\begin{itemize} \label{p2splittinglemma}
			\item $\cO(-1)^{[2]} \simeq \cO(-1) \oplus \cO(-1)$,
			\item $\cO^{[2]} \simeq \cO \oplus \cO(-1)$, and
			\item $\cO(1)^{[2]} \simeq \cO \oplus \cO$. 
		\end{itemize}
	\end{proposition}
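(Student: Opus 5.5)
The plan is to make the universal family over $\ell^{[2]}$ explicit and compute the tautological bundles by pushing forward along a projection. Write $\ell = \bbP(U)$ with $\dim U = 2$. Then $\ell^{[2]} \simeq \bbP(\Sym^2 U^\vee) \simeq \bbP^2$ parametrizes degree $2$ divisors on $\ell$. The universal subscheme $\cZ_2 \subset \ell \times \ell^{[2]}$ is cut out by the tautological section of $\cO_\ell(2) \boxtimes \cO_{\bbP^2}(1)$: at a point $[f]$ with $f \in \Sym^2 U^\vee$ it vanishes exactly on the zeros of $f$. So $\cZ_2$ is a divisor of bidegree $(2,1)$. We fix the convention for $\cO_{\bbP^2}(1)$ so that this section is globally defined, and check that this matches $\cO(1)$ in the statement.

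Let $q \colon \ell \times \bbP^2 \to \bbP^2$ be the projection. Since $\cZ_2 \to \bbP^2$ is finite and flat, we have $\cO(a)^{[2]} = q_*(\cO_{\cZ_2} \otimes \cO_\ell(a))$, with no higher direct images. Twisting the ideal sheaf sequence of $\cZ_2$ gives
\[0 \to \cO(a-2) \boxtimes \cO(-1) \to \cO(a) \boxtimes \cO \to \cO_{\cZ_2}(a) \to 0,\]
and we apply $Rq_*$. By base change, $R^iq_*(\cO(b)\boxtimes \cO(c)) = H^i(\bbP^1,\cO(b)) \otimes \cO(c)$. The three cases then go as follows.

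\begin{itemize}
\item For $a = 1$: the outer term $\cO(-1)\boxtimes\cO(-1)$ has no cohomology along the fibres. Hence $\cO(1)^{[2]} \simeq H^0(\cO_\ell(1)) \otimes \cO \simeq \cO^{\oplus 2}$.
\item For $a = 0$: we have $q_*\cO = \cO$ and $R^1q_*\cO = 0$, while $R^1q_*(\cO(-2)\boxtimes\cO(-1)) = H^1(\cO_{\bbP^1}(-2)) \otimes \cO(-1) = \cO(-1)$. This gives an extension
\[0 \to \cO \to \cO^{[2]} \to \cO(-1) \to 0.\]
It splits because $\on{Ext}^1(\cO(-1),\cO) = H^1(\bbP^2,\cO(1)) = 0$.
\item For $a = -1$: the middle term has $Rq_* = 0$. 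Hence $\cO(-1)^{[2]} \simeq R^1q_*(\cO(-3)\boxtimes\cO(-1)) = H^1(\cO_{\bbP^1}(-3)) \otimes \cO(-1) \simeq \cO(-1)^{\oplus 2}$.
\end{itemize}

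The only delicate point is the sign convention for $\cO_{\bbP^2}(1)$ on $\bbP(\Sym^2 U^\vee)$, which determines whether the universal divisor has bidegree $(2,1)$ or $(2,-1)$. I will check it in two independent ways. First, the section $\cO \to \cO^{[2]}$ coming from $1$ must be nowhere vanishing. Second, $\det \cO^{[2]}$ should be $-\tfrac12$ of the class of the non-reduced locus, which is a conic in $\bbP^2$ and therefore has class $2H$; this gives $\det \cO^{[2]} = \cO(-1)$. Both checks are consistent with the answers above.
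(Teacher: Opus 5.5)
Your proof is correct, and it follows a different route from the paper's. The paper works on the double cover $q\colon \bbP^1\times\bbP^1\to\bbP^2$ with $q^*\cO(1)=\cO(1,1)$. By the projection formula and K\"unneth, $H^*(\cO(d)^{[2]}\otimes\cO(m))=H^*(\cO(m+d,m))$, and this has no $H^1$ for $\abs{d}\le 1$ and all $m$. Horrocks' criterion then gives the splitting. The summands are identified case by case: acyclicity of $\cO(-1)^{[2]}$ and of its twist by $\cO(-1)$; the branched double cover formula $q_*\cO\simeq\cO\oplus\cO(-1)$; and $h^0(\cO(1)^{[2]})=2$.

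You instead realize $\cZ_2$ as the $(2,1)$-divisor in $\ell\times\bbP^2$ and push forward the twisted ideal sheaf sequence, so the splitting comes out directly. For $a=\pm1$ one outer term is acyclic along the fibres. This gives the canonical isomorphisms $\cO(1)^{[2]}\simeq H^0(\cO_\ell(1))\otimes\cO$ and $\cO(-1)^{[2]}\simeq H^1(\cO_\ell(-3))\otimes\cO(-1)$. For $a=0$ the only input is $H^1(\bbP^2,\cO(1))=0$.

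Your argument needs neither Horrocks nor the branched-cover formula; in effect you reprove the latter. Because your isomorphisms are canonical in $U$, they also globalize over families of lines with $U$ replaced by $\cU_2$, whereas the Horrocks argument is purely fibrewise. The paper's route gets the vanishing for all twists at once from K\"unneth, at the cost of a separate identification of the summands.

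Your worry about the sign convention is unnecessary. $\cZ_2$ is an effective divisor, and it meets $\{x\}\times\bbP^2$ in the line of length-two subschemes containing $x$. So its class is $(2,1)$ with respect to the hyperplane class, independent of conventions, and your two cross-checks are not needed.
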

	\begin{proof}
		Let $q : \bbP^1 \times \bbP^1 \to (\bbP^1)^{[2]} \simeq \bbP^2$ and $p : \bbP^1 \times \bbP^1 \to \bbP^1$ projection onto the first factor. By the Horrocks splitting criterion on $\bbP^2$, to check that these rank 2 vector bundles split it suffices to check that every twist has no intermediate cohomology. Note that $q^*\cO(1) \simeq \cO(1,1)$. Moreover, observe that the projection formula yields 
		\[\cO(d)^{[2]} \otimes \cO(n) \simeq q_*p^*\cO(d) \otimes \cO(n) \simeq q_*\cO(n+d,n).\]
		Hence $R\Gamma(\cO(d)^{[2]} \otimes \cO(n)) \simeq R\Gamma(\cO(n+d,n))$ since $q$ is finite. It is therefore an immediate consequence of the K\"unneth formula for sheaf cohomology that if $\abs{d} \leq 1$, $H^1(\cO(d)^{[2]}\otimes\cO(n)) = 0$ for all $n \in \bZ$, and the vector bundles $\cO(d)^{[2]}$ split in this range. \par 
		To see which line bundles these rank 2 vector bundles split into requires some easy casework: 
		\begin{itemize}
			\item $\cO(-1)^{[2]}$ and its twist  $\cO(-1)^{[2]} \otimes \cO(-1)$ are acyclic on $\bbP^2$, so all of its summands must be isomorphic to $\cO(-1)$; hence $\cO(-1)^{[2]} \simeq \cO(-1)\oplus \cO(-1)$.
			\item The splitting $\cO^{[2]} = q_*\cO \simeq \cO \oplus \cO(-1)$ follows from the fact that $q$ is a branched covering branched over a conic in $\bbP^2$, along with a general result for the pushforward of the structure sheaf for branched coverings. 
			\item For $\cO(1)^{[2]}$, observe that the global sections are 2-dimensional; but the only split rank 2 vector bundle on $\bbP^2$ with 2-dimensional global sections is $\cO \oplus \cO$. 
		\end{itemize} 
	\end{proof}
	We can now extract information about the normal bundle for the inclusion $\ell^{[2]} \hookrightarrow X^{[2]}$. Putting together the above results along with the additivity of the functor $\cE \mapsto \cE^{[2]}$ immediately implies the following:
	\begin{proposition}
		There is an isomorphism on $\mathbb{\ell}^{[2]} \simeq \mathbb{P}^2$: 
		\[\mathcal{N}_{\ell^{[2]}/X^{[2]}} \simeq \mathcal{O}(-1)^{\oplus 2} \oplus \mathcal{O}^{\oplus(2n-4)}.\]
	\end{proposition}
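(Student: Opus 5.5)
The plan is to compute $\cN_{\ell^{[2]}/X^{[2]}}$ as a tautological bundle via Proposition \ref{hilb2nmbdl}, then split it using Proposition \ref{splittingprop1} and Proposition \ref{p2splittinglemma}.

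First, apply Proposition \ref{hilb2nmbdl} to the closed immersion $\ell \subset X$. Both $\ell \simeq \bbP^1$ and $X$ are smooth, so the hypotheses hold, and we get
\[\cN_{\ell^{[2]}/X^{[2]}} \simeq \cN_{\ell/X}^{[2]}.\]
Here the right-hand side is the tautological bundle on $\ell^{[2]} \simeq \bbP^2$ attached to the bundle $\cN_{\ell/X}$ on $\ell$. Thus the problem reduces to computing tautological bundles on $(\bbP^1)^{[2]}$.

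Next, use that $\cE \mapsto \cE^{[2]} = q_*p^*\cE$ is additive, since $p^*$ and $q_*$ both commute with direct sums. By Proposition \ref{splittingprop1}, $\cN_{\ell/X}$ is one of two split bundles, and we treat each case using Proposition \ref{p2splittinglemma}.
\begin{itemize}
\item If $\cN_{\ell/X} \simeq \cO^{\oplus 2} \oplus \cO(1)^{\oplus(n-3)}$, then
\[\cN_{\ell/X}^{[2]} \simeq (\cO \oplus \cO(-1))^{\oplus 2} \oplus (\cO \oplus \cO)^{\oplus(n-3)} \simeq \cO(-1)^{\oplus 2} \oplus \cO^{\oplus(2 + 2n - 6)}.\]
Since $2 + 2n - 6 = 2n-4$, this is the claimed bundle.
\item If $\cN_{\ell/X} \simeq \cO(-1) \oplus \cO(1)^{\oplus(n-2)}$, then
\[\cN_{\ell/X}^{[2]} \simeq (\cO(-1) \oplus \cO(-1)) \oplus (\cO\oplus\cO)^{\oplus(n-2)} \simeq \cO(-1)^{\oplus 2} \oplus \cO^{\oplus(2n-4)}.\]
\end{itemize}
In both cases we obtain $\cO(-1)^{\oplus 2} \oplus \cO^{\oplus(2n-4)}$, as claimed. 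As a consistency check, the total rank is $2n-2 = 2\dim X - \dim \ell^{[2]}$, which is the codimension of $\ell^{[2]}$ in $X^{[2]}$.

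The only point needing care is the first step: one must make sure Proposition \ref{hilb2nmbdl} is applied to the pair $(\ell, X)$ itself, rather than to a restriction of a tautological bundle from $X^{[2]}$. Doing it this way sidesteps any base-change question for the universal family. Everything after that is bookkeeping with the splittings already established.
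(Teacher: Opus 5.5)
Your proof is correct and is exactly the paper's argument: the paper's proof is the one-line instruction to combine Propositions \ref{hilb2nmbdl}, \ref{splittingprop1} and \ref{p2splittinglemma}. You have carried out that combination explicitly, and your case-by-case bookkeeping for both possible splittings of $\cN_{\ell/X}$ is correct.
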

	\begin{proof}
		Combine Propositions \ref{hilb2nmbdl}, \ref{splittingprop1} and \ref{p2splittinglemma}. 
	\end{proof}
	Using this, it is not too difficult to finish the proof of Theorem \ref{delpezzoflip}. 
	\begin{proof}[Proof of Theorem \ref{delpezzoflip}]
		We would like to invoke the Nakano contraction theorem to the blowup of $X^{[2]}$ along the subvariety $\bbP(\Sym^2 \cU_2^\vee)$, and by the discussion above it suffices to calculate the restricted normal bundle $\cN_{\bbP(\Sym^2 \cU_2)/X^{[2]}}$. However, one can observe that $\bbP(\Sym^2 \cU_2^\vee)$ coincides with the relative Hilbert scheme of 2 points for the universal line $\bbL \to F_1(X)$. To make this explicit, we write $\bbL^{[2/F_1(X)]} := \bbP(\Sym^2\cU_2^\vee)$. \par 
		From the sequence of inclusions $\ell^{[2]} \subset \mathbb{L}^{[2]/F_1(X)} \subset X^{[2]}$, we get a sequence of normal bundles 
		\[0 \to \mathcal{N}_{\ell^{[2]}/\mathbb{L}^{[2]/F_1(X)}} \to \mathcal{N}_{\ell^{[2]}/X^{[2]}} \to \mathcal{N}_{\mathbb{L}^{[2]/F_1(X)}/X^{[2]}}|_{\ell^{[2]}} \to 0.\]
		However, the first term is necessarily trivial since $\ell^{[2]}$ is simply the fiber of the projective bundle morphism $\mathbb{L}^{[2]/F_1(X)} \to F_1(X)$, so this can be rewritten as 
		\[0 \to \mathcal{O}^{\oplus(2n-4)} \to \mathcal{O}(-1)^{\oplus2}\oplus \mathcal{O}^{\oplus(2n-4)} \to \mathcal{N}_{\mathbb{L}^{[2/F_1]}/X^{[2]}}|_{\ell^{[2]}} \to 0,\]
		so the last term must be $\mathcal{O}(-1)^{\oplus 2}$. In particular, $\cN_{\bbL^{[2/F_1]}/X^{[2]}} \simeq \pi^*\cE \otimes \cO(-1)$ for some vector bundle $\cE$ on $F_1(X)$. \par 
		In order to finish the theorem, we wish to apply Proposition \ref{contractionprop} to the projective variety $X^{[2]}$ with role of the subvariety being played by $\bbL^{[2/F_1]} \subset X^{[2]}$. For the required map from $X^{[2]}$ to a projective variety contracting $L^{[2/F_1]}$, observe that there is a natural morphism 
		\[X^{[2]} \to \Gr(2,H^0(\cO(H))^\vee)\] 
		which takes a length two subscheme of $X^{[2]} \subset \bbP(H^0(\cO(H))^\vee)$ to the line it spans. Taking the Stein factorization yields a map $X^{[2]} \to \bar{Y} \to \Gr(2,H^0(\cO(H))^\vee)$ which is an isomorphism onto its image away from $\bbL^{[2/F_1]}$ and contracts this subvariety to $F_1(X) \subset \Gr(2,H^0(\cO(H))^\vee)$. Hence Proposition \ref{contractionprop} applies and we get the standard flip diagram \eqref{flipdiag}. \par 
		The existence of $\Db(F_1(X))$ as a semiorthogonal component then follows immediately from Theorem A of \cite{cubicflip}. 
	\end{proof}
	\section{Flips via linear spans of quadrics} \label{geomflipsection}
	In each part of Theorem \ref{geomflipthm}, the flavor of the construction of the putative flip between the Hilbert scheme of quadrics $G_k(X)$ and its birational partner is the same: given a quadric $\Sigma \in G_k(X)$ we take the $(k+1)$-plane $\langle \Sigma \rangle$ spanned by $\Sigma$ in the ambient projective space before intersecting with $X$ again. In each case, we find ourselves in the following highly special situation. 
	\begin{situation} \label{flippingsituation}
		Assume that we have the following data: 
		\begin{itemize}
			\item a proper variety $G$ with vector bundles $\cE$ and $\cE'$,
			\item a pair of closed subvarieties $Y \hookrightarrow \bbP(\cE)$ and $Y' \hookrightarrow \bbP(\cE')$, with $Y'$ smooth and irreducible, and
			\item a smooth, possibly disconnected closed subvariety $F \hookrightarrow G$ such $Y \times_G F \to \bbP(\cE|_F)$ is an isomorphism and $Y' \times_G F \to \bbP(\cE'|_F)$ is a (possibly empty) equidimensional union of projective subbundles over the connected components of $F$. 
		\end{itemize}
		We summarize this in the following (rather large) diagram: 
		\begin{equation}
			\begin{tikzcd}[sep=small]
			Y \times_G F \arrow[r,"\sim"] & \bbP(\cE|_F) \arrow[r,hook]\arrow[ddrrr]& Y  \arrow[r,hook]& \bbP(\cE) \arrow[rd] & & \bbP(\cE') \arrow[ld] & Y \arrow[l,hook'] & \bbP(\cE'|_F) \arrow[l,hook'] \arrow[ddlll]& Y' \times_G F \arrow[l,hook']\\
			& & & & G & & & & \\
			& & & & F \arrow[u,hook]& & & &
			\end{tikzcd}
		\end{equation}
		Given this data, we may define $Z := Y \times_G F$, $Z' := Y' \times_G F$, along with their open complements $U := Y \setminus Z$, $U' := Y' \setminus Z'$. \par 
		Write $\cO_{Y'}(-1)$ for the restriction of the tautological subbundle from the projective bundle $\bbP(\cE')$. Now suppose that $Y$ is smooth along $Z$, and that on $Y'$ there exists a map $\textbf{s} : \cO_{Y'}(1) \rightarrow \cE$, such that: 
		\begin{itemize}
			\item $\textbf{s}$ is injective with locally free cokernel along $U'$,
			\item the induced map $U' \to \bbP(\cE)$ is an isomorphism onto $U$.
		\end{itemize}
		It then follows immediately that $Y$ is smooth. 
	\end{situation}
	For the cases of our interest, we will always take $Y = G_k(X)$ to be the Hilbert scheme of quadrics for $X \subset \bbP(V)$ our del Pezzo variety, $G = \Gr(k+2,V)$ to be the Grassmannian of projective $(k+1)$-planes, $F = F_{k+1}(X)$ to be the Fano variety of $(k+1)$-planes contained within $X$, and $\cE = \Sym^2 \cU_{k+2}^\vee$ where $\cU_{k+2}$ is the tautological subbundle on $\Gr(k+2,V)$. 
	\begin{warning}
		As we will frequently need to pull back the bundle $\cU_{k+2}$ on $G = \Gr(k+2,V)$ to the varieties $F, Y$ and $Y'$, we often abuse notation by suppressing pullbacks and restrictions and writing simply $\cU_{k+2}$; generally it will be clear from context which variety the bundle lives on. 
	\end{warning}
	\begin{lemma}\label{flippinglemma}
		Suppose that we are in the setting of Situation \ref{flippingsituation}.
		\begin{enumerate}
			\item If $Z' = \varnothing$, then $Y = Z \sqcup U$. In particular if $F = \varnothing$, then $Y = U$. 
			\item If $Z'$ is nonempty, then $Y$ is irreducible. If moreover $Z' = V(\textbf{s})$ has codimension equal to $\on{rank}\cE$ (so that in the language of \cite{cbs}, $Z'$ is a complete bundle-section), then $\Bl_Z Y \simeq \Bl_{Z'} Y'$ and the diagram 
			\begin{center}
				\begin{tikzcd}
					Z \arrow[r, hook] \arrow[d] & Y \arrow[r, dashed, leftrightarrow] & Y' & Z' \arrow[l, hook'] \arrow[d] \\
					F & & & F 
				\end{tikzcd}
			\end{center}
			is a standard flip. 
		\end{enumerate}
	\end{lemma}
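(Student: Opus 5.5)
The plan is to compare $Y$ and $Y'$ through the rational map $Y' \dashrightarrow \bbP(\cE)$ over $G$ defined by $\textbf{s}$. By hypothesis this map is regular on $U'$ and identifies $U'$ with $U$.

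For (1), suppose $Z' = \varnothing$. Then $U' = Y'$. Since $Y'$ is closed in $\bbP(\cE')$ and $G$ is proper, $Y'$ is proper, so its image $U$ is closed in $\bbP(\cE)$ and hence closed in $Y$. On the other hand $U = Y \setminus Z$ is open in $Y$, because $Z = Y \times_G F$ is closed. So $U$ is open and closed, and $Y = Z \sqcup U$. If $F = \varnothing$, then $Z = \varnothing$ and $Y = U$.

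For (2), assume $Z' \neq \varnothing$ and $Z' = V(\textbf{s})$ has codimension $\on{rank}\cE$.

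\emph{Step 1: resolving $\textbf{s}$.} Since $Z'$ is a complete bundle-section, its ideal sheaf is the image of $\textbf{s}^\vee : \cE^\vee \to \cO_{Y'}(-1)$. Moreover $\cN_{Z'/Y'} \simeq \cE|_{Z'} \otimes \cO_{Y'}(-1)^{\vee}$, i.e. $\cE|_{Z'}$ twisted by the appropriate power of the tautological bundle. Let $\phi' : \tilde Y \to Y'$ be the blowup of $Y'$ along $Z'$, with exceptional divisor $E'$. On $\tilde Y$ the section factors as $\cO(1)(E') \to \cE$, and this factored map is injective with locally free cokernel everywhere. It therefore defines a morphism $\psi : \tilde Y \to \bbP(\cE)$ over $G$.

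On $E' \simeq \bbP_{Z'}(\cN_{Z'/Y'}) = \bbP_{Z'}(\cE|_{Z'})$, the map $\psi$ is the tautological map to $\bbP(\cE|_F)$. Because $Z' \to F$ is a union of projective subbundles, this identifies $E' \simeq Z' \times_F Z$, and in particular $\psi(E') = Z$. Now $\tilde Y$ is irreducible and proper, and $U' \subset \tilde Y$ is dense. Hence $\psi(\tilde Y) = \bar U$, and this set contains both $U$ and $Z$. It follows that $\bar U = Y$, so $Y$ is irreducible, and $Y$ is smooth by the final remark of Situation \ref{flippingsituation}.

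\emph{Step 2: $\tilde Y \simeq \Bl_Z Y$.} The preimage $\psi^{-1}(Z)$ is the Cartier divisor $E'$. By the universal property of blowing up, $\psi$ factors through a proper birational morphism $f : \tilde Y \to \Bl_Z Y$, and $f$ is an isomorphism over $U$. Both varieties are smooth.

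I expect this step to be the main obstacle: showing that $f$ contracts nothing. I would argue as follows.
\begin{itemize}
\item By purity of the exceptional locus, any exceptional locus of $f$ is a divisor whose image has codimension $\geq 2$.
\item The only candidate for such a divisor is $E'$, since $f$ is an isomorphism away from it.
\item But $f^{-1}(\text{exceptional divisor of }\Bl_Z Y) = E'$ and $f$ is surjective, so $f(E')$ is the whole exceptional divisor, which has codimension one.
\end{itemize}
Therefore $f$ is quasi-finite, and by Zariski's main theorem $f$ is an isomorphism.

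\emph{Step 3: the flip structure.} The isomorphism $f$ identifies $\bbP_Z(\cN_{Z/Y}) \simeq E' \simeq Z \times_F Z'$ over $Z$. Here $Z' = \bbP_F(\cE'')$ on each component of $F$. Consequently $\cN_{Z/Y} \simeq \pi^*\cE'' \otimes L$ for some line bundle $L$ on $Z$. To pin down $L$, I would restrict $\cO(E')$ to a fibre of $E' \to Z$ and compare with the known restriction of $\cO(E')$ to a fibre of $E' \to Z'$, namely $\cO(-1)$. Doing this on the fibres $\bbP^r \times \bbP^s$ over points of $F$ gives $\cO_{E'}(E') = \cO(-1,-1)$. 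This forces $L|_{\text{fibre of }\pi} = \cO(-1)$, so $L = \cO_\pi(-1)$ up to a pullback from $F$, which can be absorbed into $\cE''$. Hence $\cN_{Z/Y} \simeq \cO_\pi(-1) \otimes \pi^*\cE''$, which together with $\Bl_Z Y \simeq \Bl_{Z'} Y'$ is exactly the definition of a standard flip.
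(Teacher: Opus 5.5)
Your argument for (1) is the paper's, and your Steps 1--3 correctly prove the second assertion of (2). They do not, however, prove the first assertion. The lemma claims that $Y$ is irreducible whenever $Z'\neq\varnothing$, with no codimension hypothesis. You open (2) by also assuming that $Z'=V(\textbf{s})$ has codimension $\on{rank}\cE$, and your irreducibility argument genuinely uses this: the identification $E'\simeq\bbP_{Z'}(\cE|_{Z'})$, and hence $\psi(E')=Z$, requires $Z'$ to be the zero scheme of a regular section.

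The missing idea is a direct connectedness argument that needs no blowup, which is how the paper proceeds.
\begin{itemize}
\item Since $Y$ is smooth, it is enough to show it is connected.
\item $Y'$ is irreducible and proper with $U'$ dense, and $U'\simeq U$ over $G$. Hence $\im(Y'\to G)=\bar{\im(U'\to G)}=\bar{\im(U\to G)}=\im(\bar U\to G)$.
\item Since $Z'\to F$ is surjective, this image contains $F$. So over every point of $F$ there is a point of $\bar U$, which necessarily lies in $Y\times_G F=Z$.
\item The connected components of $Z\simeq\bbP(\cE|_F)$ lie over those of $F$, so the irreducible set $\bar U$ meets each of them. Therefore $Y=\bar U\cup Z$ is connected.
\end{itemize}

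The rest is sound and close to the paper.
\begin{itemize}
\item In Step 2 you should say why $\psi^{-1}(Z)=E'$ scheme-theoretically: both equal $\tilde Y\times_G F$, since $\psi$ is a morphism over $G$ and $Z'=Y'\times_G F$.
\item When $F$ is disconnected, the argument that $f(E')$ is a divisor should be run on each component of $E'$ separately. This works because $f$ commutes with the maps to $F$.
\item Step 3 differs mildly from the paper. The paper reads off the standard-flip structure from $\cN_{Z'/Y'}\simeq\cO_{Z'}(-1)\otimes\cE$, obtained from the Koszul complex. You instead check the defining condition on $\cN_{Z/Y}$ directly, via $\bbP_Z(\cN_{Z/Y})\simeq Z\times_F Z'$ and the degree of $\cO(E')$ on the fibres of $E'\to Z'$. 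This is valid, and it matches the paper's definition more literally.
\item Your formula $\cN_{Z'/Y'}\simeq\cE|_{Z'}\otimes\cO_{Y'}(-1)^\vee$ has the wrong sign, since $\textbf{s}$ is a section of $\cO_{Y'}(-1)\otimes\cE$. This is harmless: you only use $\bbP(\cN_{Z'/Y'})$ and the general fact that $\cO(E')$ has degree $-1$ on the fibres of $E'\to Z'$.
\end{itemize}
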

	\begin{proof}
		(1) is clear: by assumption $Z$ is smooth, and the isomorphism $U \simeq U'$ ensures that $U$ is smooth, irreducible and proper; then the fact that $U$ is proper ensures that it is closed in $Y$, decomposing $Y$ into connected components. \par 
		For the claims of (2): we first show the irreducibility of $Y$, which is straightforward. Since $Y$ is smooth, it suffices to show that $Y$ is connected. By assumption, $Z'$ is an equidimensional union of projective bundles over the components of $F$, so in particular $Z' \to F$ is surjective and $F$ lies inside the image
		\[\im(Y' \to G) = \bar{\im(U' \to G)} = \bar{\im(U \to G)} = \im(\bar{U} \to G).\] 
		In particular, the irreducible subset $\bar{U}$ must meet each connected component of $Z$, and as $U \cup Z = Y$ we conclude that $Y$ is connected. 
		\par 
		Assume now that $Z' = V(\textbf{s})$ has codimension equal to $\on{rank} \cE$. Then there is a natural embedding of $\Bl_{Z'} Y' \hookrightarrow \bbP(\cE) \times_G Y'$ whose restriction to $U'$ is the morphism induced by the section $\textbf{s}$. Composing with the projection $\bbP(\cE) \times_G Y' \to \bbP(\cE)$ therefore yields a morphism $\Bl_{Z'} Y' \to \bbP(\cE)$ whose restriction to $U'$ gives the isomorphism onto $U \subset Y$. It follows from taking closures that the image of the morphism $\Bl_{Z'} Y' \to \bbP(\cE)$ lands in $Y$. \par 
		However, since $Z$ and $Z'$ are both the pullback of $F$ from $G$, the pullback of $Z$ along this morphism is precisely the exceptional divisor of the blowup; in particular, we find a birational map $\Bl_{Z'} Y' \to \Bl_Z Y$ sending exceptional divisor to exceptional divisor. Since this birational map cannot contract any divisor, purity of the exceptional locus ensures that this map is an isomorphism. \par 
		To see that this is a standard flip, first observe that the exceptional divisor of $\Bl_Z Y = \Bl_{Z'}Y'$ can be identified with the biprojective bundle $Z \times_F Z'$ with the maps to $Z$ and $Z'$ being the two natural contractions. For this to be a standard flip, it suffices to compute the normal bundle $\cN_{Z'/Y'}$; but since $Z'$ is cut out by the regular section of the bundle $\cO_{Y'}(-1) \otimes \cE$ induced by $\textbf{s}$, a standard computation using Koszul resolutions shows that in fact $\cN_{Z'/Y'} \simeq \cO_{Z'}(-1) \otimes \cE $. 
	\end{proof}
	\begin{remark}
		Note that in the case when $F$ is disconnected, we may instead view this as the composition of the standard flips along the connected components of $Z'$. 
	\end{remark}
	The meat of our analyses will be showing that the following situations all fall into the regime above. 
	\subsection{The case of cubics}
	We now work in the setting of Theorem \ref{geomflipthm}(a); in particular, we let $X \subset \bbP^{n+1} = \bbP(V)$ be a smooth $n$-dimensional cubic hypersurface such that $F_k(X)$, $F_{k+1}(X)$ are smooth and irreducible of the expected dimensions 
	\[\dim F_k(X) = (k+1)(n+1-k)-{k+3\choose 3} \text{ and } \dim F_{k+1}(X) = (k+2)(n-k) - {k + 4 \choose 3}.\]
	Note that this is true for a generic cubic hypersurface by \cite[Th\'eor\`eme 2.1b)]{debarremanivel}.
	Our precise version of the theorem for cubics is as follows: 
	\begin{theorem} \label{geomflipcubics}
			Fix an integer $0 \leq k \leq n$. Suppose $X \subset \bbP^{n+1}$ is a smooth cubic hypersurface such that $F_k(X)$ and $F_{k+1}(X)$ are smooth and irreducible of the expected dimension. Let $\cU_{k+2}$ denote the tautological subbundle of rank $k+2$ on $\Gr(k+2,V)$ and $\cQ_{k+1}$ denote the tautological quotient bundle of rank $(n+2) - (k+1)$ on $F_k(X)$. Then we have a standard flip diagram 
		\begin{equation}
			\begin{tikzcd}
				\bbP(\Sym^2 \cU_{k+2}^\vee|_{F_{k+1}(X)}) \arrow[d] \arrow[r, hook]& G_k(X) \arrow[leftrightarrow, r, dashed, "\text{flip}"] & \bbP(\cQ_{k+1}) \arrow[d] & \arrow[l, hook'] F_{k,k+1}(X) \arrow[d]\\
				F_{k+1}(X) & & F_k(X) & F_{k+1}(X)
			\end{tikzcd}
		\end{equation}
		In particular, $G_k(X)$ is smooth and irreducible of the expected dimension. 
	\end{theorem}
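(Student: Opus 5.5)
We verify that this setup falls under Situation \ref{flippingsituation} and then apply Lemma \ref{flippinglemma}(2). Take $G = \Gr(k+2,V)$, $\cE = \Sym^2\cU_{k+2}^\vee$, $Y = G_k(X) \subset \bbP(\Sym^2\cU_{k+2}^\vee) = G_k(\bbP(V))$ and $F = F_{k+1}(X)$. For the other side we use $Y' = \bbP_{F_k(X)}(\cQ_{k+1})$. A point of $Y'$ is a pair $(\Lambda, W)$ with $\Lambda = \bbP(A)$ a $k$-plane in $X$, $\dim A = k+1$, and $W \supset A$ of dimension $k+2$. This gives a map $Y' \to G$, $(\Lambda,W)\mapsto W$, and we take $\cE' = \cQ_{k+1}$, pulled back to $G$ in the appropriate sense. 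The geometric idea is residual intersection. If $\bbP(W)\not\subset X$, then $X\cap \bbP(W)$ is a cubic hypersurface in $\bbP(W)$ containing the hyperplane $\bbP(A)$, so it equals $\bbP(A)\cup\Sigma$ with $\Sigma$ a $k$-dimensional quadric. Conversely, if $\Sigma\subset X$ is a quadric whose span $\bbP(W)$ is not contained in $X$, then $X\cap\bbP(W) = \Sigma\cup\bbP(A)$ for a unique hyperplane $\bbP(A)\subset X$. This identifies $U'$ with $U$.

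To produce the section $\textbf{s}$ moduli-theoretically, let $f\in\Sym^3V^\vee$ be the cubic. On $Y'$ the restriction $f|_W \in \Sym^3 W^\vee$ vanishes on $A$, so it lies in the image of $(W/A)^\vee\otimes\Sym^2W^\vee \hookrightarrow \Sym^3W^\vee$. Here $W/A$ is exactly the tautological line $\cO_{Y'}(-1)\subset\cQ_{k+1}$. Dividing by the linear form defining $A$ gives
\[\textbf{s}: \cO_{Y'}(-1)\to \Sym^2\cU_{k+2}^\vee,\]
which up to the twist convention of the Situation is the required map $\cO_{Y'}(1)\to\cE$ (after dualizing conventions). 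Its zero locus is $\{f|_W = 0\} = Z' = Y'\times_G F$. This is the flag variety $F_{k,k+1}(X)$, a $\bbP^{k+1}$-bundle $\bbP(\cU_{k+2}^\vee)$ over $F_{k+1}(X)$, which is smooth and irreducible by hypothesis. Away from $Z'$, the map is injective with locally free cokernel, and the induced map to $\bbP(\cE)$ is the residual-quadric map above. To see that it is an isomorphism $U'\to U$ as schemes, not just on points, I would construct the inverse in families: for a flat family of quadrics with span not in $X$, the residual of the family of quadrics in $X\cap\bbP(W)$ is cut out by the quotient of $f|_W$ by the family of quadratic forms. This gives a flat family of hyperplanes in $X$, hence a morphism $U\to U'$. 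Also $Z = Y\times_G F = \bbP(\Sym^2\cU_{k+2}^\vee|_F)$, since every quadric in a $(k+1)$-plane in $X$ lies in $X$.

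Next we do the dimension check required by Lemma \ref{flippinglemma}(2). We have $\dim Y' = \dim F_k(X) + n-k$. Using the expected dimension formulas, $\dim Y' - \dim Z' = \dim F_k(X)+n-k - \dim F_{k+1}(X) - (k+1)$. A computation using $(k+1)(n+1-k)-(k+2)(n-k) = 2k+1-n$ and $\binom{k+4}{3}-\binom{k+3}{3} = \binom{k+3}{2}$ gives $\binom{k+3}{2} = \on{rank}\Sym^2\cU_{k+2}^\vee$. So $Z'$ is a complete bundle-section and is nonempty (when $F_{k+1}(X)\neq\varnothing$; otherwise part (1) gives $Y\simeq Y'$).

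The main obstacle is the hypothesis that $Y$ is smooth along $Z$. For this I would argue by deformation theory. At $\Sigma\subset\Pi\subset X$ with $\Pi$ a $(k+1)$-plane, I would bound $h^0(\cN_{\Sigma/X})$ by $\dim Y$. Use the sequence $0\to\cN_{\Sigma/\Pi}\to\cN_{\Sigma/X}\to\cN_{\Pi/X}|_\Sigma\to 0$ together with $\cN_{\Sigma/\Pi}=\cO_\Sigma(2)$. Then compare $H^0(\cN_{\Pi/X}|_\Sigma)$ with $H^0(\cN_{\Pi/X})$ via the restriction sequence twisted by $\cO_\Pi(-2)$, which involves $H^0,H^1(\cN_{\Pi/X}(-2))$. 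Because $F_{k+1}(X)$ is smooth of the expected dimension, $h^0(\cN_{\Pi/X}) = \dim F_{k+1}(X)$ and $H^1(\cN_{\Pi/X})=0$. From $0\to\cN_{\Pi/X}\to\cO_\Pi(1)^{n+1-k}\to\cO_\Pi(3)\to0$ one controls the twist by $-2$. The expected conclusion is that the tangent space has dimension $\dim Z + \on{rank}(\text{normal}) = \dim F_{k+1}(X)+\binom{k+3}{2}-1 + (\text{excess})$, and that this equals $\dim Y' $. Equality then forces smoothness, since $Y$ contains the irreducible component $\bar U$ of that dimension passing through $Z$ (by the connectedness argument). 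If the direct cohomology bound is too weak, I would fall back on showing $\Bl_{Z'}Y'\to Y$ is surjective and then compute the tangent space at points of $Z$ against the image of the exceptional divisor. Once smoothness along $Z$ holds, Lemma \ref{flippinglemma}(2) yields the standard flip, and $G_k(X)$ is smooth and irreducible of dimension $\dim Y'$.
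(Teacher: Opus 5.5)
Your setup is the paper's: the same data for Situation~\ref{flippingsituation}, the same map $\textbf{s}$ obtained by dividing $f|_W$ by the equation of the hyperplane, and the same inverse via the residual hyperplane. The identification $V(\textbf{s})=Z'$ and the codimension count also match. Two small corrections:
\begin{itemize}
\item The bundle $\cE'$ of Situation~\ref{flippingsituation} must live on $G$. Here it is $\cU_{k+2}^\vee$, via $Y'\subset\Fl(k+1,k+2,V)\simeq\bbP_G(\cU_{k+2}^\vee)$, which is what gives $Z'=\bbP(\cU_{k+2}^\vee|_F)$ as you later say. $\cQ_{k+1}$ lives on $F_k(X)$ and cannot play this role.
\item $\cN_{\Pi/\bbP^{n+1}}\simeq\cO_\Pi(1)^{\oplus(n-k)}$, not $n+1-k$. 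This is harmless below.
\end{itemize}

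The real divergence, and the one unfinished step, is smoothness of $G_k(X)$ along $Z$. The paper shows the obstruction space vanishes. Since $\Sigma\subset X$ is lci, $h^1(\cN_{\Sigma/X})=0$ follows from three facts:
\begin{itemize}
\item $h^1(\cO_\Sigma(2))=0$;
\item $h^1(\cN_{\Pi/X})=0$, because the dimension count forces $H^0(\cO_\Pi(1))^{\oplus(n-k)}\to H^0(\cO_\Pi(3))$ to be surjective;
\item $h^2(\cN_{\Pi/X}(-2))=0$.
\end{itemize}
Unobstructedness then gives smoothness at $[\Sigma]$ directly.

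Your tangent-space route also works, and the ``excess'' is computable from the ingredients you list, so no fallback is needed. We have $H^0(\cN_{\Pi/X}(-2))\subset H^0(\cO_\Pi(-1))^{\oplus(n-k)}=0$, and the twisted sequence gives $H^1(\cN_{\Pi/X}(-2))\simeq H^0(\cO_\Pi(1))$. Hence $h^0(\cN_{\Pi/X}|_\Sigma)=\dim F_{k+1}(X)+k+2$, and
$h^0(\cN_{\Sigma/X})\le\binom{k+3}{2}-1+\dim F_{k+1}(X)+k+2=\dim Y'$.

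What is not justified is that every point of $Z$ lies on $\bar U$. Your inequality chain $\dim\bar U\le\dim\cO_{Y,[\Sigma]}\le\dim T_{[\Sigma]}Y$ needs $[\Sigma]\in\bar U$ for each $[\Sigma]\in Z$. The connectedness argument of Lemma~\ref{flippinglemma} only shows $\bar U\cap Z\neq\varnothing$, and it is run there after smoothness of $Y$ is already known. You can supply this with the morphism $\Bl_{Z'}Y'\to Y$ from the proof of Lemma~\ref{flippinglemma}(2), whose construction does not use smoothness of $Y$; it only needs $\textbf{s}$ to be a regular section on the smooth $Y'$. Its image is $\bar U$. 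Its exceptional divisor is $\bbP(\cE|_{Z'})=\bbP(\cE)\times_G Z'$, which maps onto $\bbP(\cE|_F)=Z$ because $Z'\to F$ is surjective.

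With this added, your proof is complete. The paper's $H^1$-vanishing is more economical because it is purely local at $[\Sigma]$ and needs no prior knowledge of $\bar U$. Your version additionally shows directly that $\dim T_{[\Sigma]}G_k(X)=\dim Y'$.
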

	\begin{remark}
		In particular, if $n < k + \frac{1}{6}(k+3)(k+2) - 1$ so that $F_k(X) = \varnothing$, $G_k(X) = \varnothing$. Otherwise if $ k + \frac{1}{6}(k+4)(k+3) > n \geq k + \frac{1}{6}(k+3)(k+2) - 1$ so that $F_{k+1}(X) = \varnothing$, the flip becomes an isomorphism $G_k(X) \simeq \bbP_{F_k(X)}(\cQ_{k+1})$. \par 
		Also, while we have assumed for simplicity that the Fano varieties $F_k(X)$ and $F_{k+1}(X)$ are irreducible, this assumption need not always be necessary. Indeed, by \cite[Theorem 4.1]{borcea} as long as a Fano variety of $k$-planes on a cubic is of the expected dimension and is positive dimensional, it is necessarily connected (and by our assumption of smoothness necessarily must be irreducible), so the assumption of irreducibility is often extraneous; moreover, to be in Situation \ref{flippingsituation} it is permissible for $F_{k+1}(X)$ to be disconnected as long as $F_k(X)$ is connected. 
	\end{remark}
	The variety $\bbP(\cQ_{k+1})$ can be described as the subvariety of $\Fl(k+1,k+2,V)$ given by pairs of the form $\{(U_{k+1} \subset U_{k+2}) : \bbP(U_{k+1}) \subset X\}.$ \par 
	In order to prove the theorem we first show that we are in Situation \ref{flippingsituation} so that we can apply Lemma \ref{flippinglemma}. As suggested before, we take: 
	\begin{itemize}
		\item $G = \Gr(k+2,V)$, with vector bundles $\cE = \Sym^2 \cU_{k+2}^\vee$ and $\cE' = \cU_{k+2}^\vee$, 
		\item $Y = G_k(X)$ and $Y' = \bbP(\cQ_{k+1})$, with natural inclusions $G_k(X) \hookrightarrow G_k(\bbP^n) \simeq \bbP_G(\Sym^2\cU_{k+2}^\vee)$ and $\bbP(\cQ_{k+1}) \hookrightarrow \Fl(k+1,k+2,V) \simeq \bbP(\cU_{k+2}^\vee)$, and 
		\item $F = F_{k+1}(X)$, with $Z := Y \times_G F \simeq \bbP(\Sym^2 \cU_{k+2}^\vee|_F)$ consisting of quadrics on $X$ whose linear span also lies in $X$, and $Z' := Y' \times_G F \simeq \bbP( \cU_{k+2}^\vee|_F) = F_{k,k+1}(X)$ consists of flags in $\Fl(k+1,k+2,V)$ whose projectivizations lie in $X$. 
	\end{itemize}
	To check that $G_k(X)$ is smooth along $Z$, we use deformation theory. 
	\begin{lemma} \label{smoothnesslemmacubics}
		The variety $G_k(X)$ is smooth along the subvariety $Z$. 
	\end{lemma}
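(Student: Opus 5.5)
We must show that $h^0(\cN_{\Sigma/X}) = \dim G_k(X)$ at each $\Sigma \in Z$, i.e. at each quadric $\Sigma$ whose span $P = \langle \Sigma \rangle \simeq \bbP^{k+1}$ lies in $X$. The plan is to show that $H^1(\cN_{\Sigma/X}) = 0$ and that $h^0(\cN_{\Sigma/X})$ equals the expected dimension $\chi(\cN_{\Sigma/X})$. This expected dimension is computed by a generic point of $U$. There we have $\dim U = \dim Y' = \dim F_k(X) + (n+1-k)$, since $\bbP(\cQ_{k+1})$ is a $\bbP^{n-k}$-bundle.

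The first step is to filter $\cN_{\Sigma/X}$ using the chain $\Sigma \subset P \subset X$. Since $\Sigma$ is a quadric hypersurface in $P$, it is a local complete intersection in each, and there is an exact sequence
\[0 \to \cN_{\Sigma/P} \to \cN_{\Sigma/X} \to \cN_{P/X}|_\Sigma \to 0, \qquad \cN_{\Sigma/P} \simeq \cO_\Sigma(2).\]
The term $\cO_\Sigma(2)$ has no $H^1$, because $\Sigma$ is a quadric: use the sequence $0\to\cO_P\to\cO_P(2)\to\cO_\Sigma(2)\to 0$. Its $h^0$ is $\binom{k+3}{2}-1$, which is exactly the fibre dimension of $Z \to F$. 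It therefore remains to control $\cN_{P/X}|_\Sigma$.

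For this I would use $0 \to \cN_{P/X} \to \cN_{P/\bbP^{n+1}} = \cO_P(1)^{n-k} \to \cO_P(3) \to 0$, where the last map is given by the partial derivatives of the cubic equation along $P$. Restricting to $\Sigma$ and using $H^1(\cO_\Sigma(1))=0$, we see that $H^1(\cN_{P/X}|_\Sigma)$ is the cokernel of the map $H^0(\cO_\Sigma(1))^{n-k} \to H^0(\cO_\Sigma(3))$. The corresponding map on $P$ has cokernel $H^1(\cN_{P/X})$, and it vanishes because $F_{k+1}(X)$ is smooth of the expected dimension at $[P]$. Indeed, smoothness gives $h^0(\cN_{P/X}) = \dim F_{k+1}(X) = \chi(\cN_{P/X})$, and the higher cohomology beyond $H^1$ vanishes automatically. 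Next, $H^0(\cO_P(3)) \to H^0(\cO_\Sigma(3))$ is surjective with kernel $q\cdot H^0(\cO_P(1))$, where $q$ is the quadric defining $\Sigma$. So surjectivity on $\Sigma$ follows from surjectivity on $P$. This gives $H^1(\cN_{P/X}|_\Sigma)=0$, and hence $H^1(\cN_{\Sigma/X})=0$.

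Finally, the dimension count: $h^0(\cN_{\Sigma/X}) = \chi(\cN_{\Sigma/X})$ is constant in flat families of quadrics. This is because its value is computed from the exact sequences above via Hilbert polynomials, and these depend only on $n$ and $k$, not on whether $P\subset X$. Equivalently, at a general point of $U$ the same Euler characteristic computation holds. So $\dim T_\Sigma G_k(X)$ equals the dimension of $U$ near $Z$. I also need $h^2(\cN_{\Sigma/X})=0$, which follows from the same sequences. Combined with the unobstructedness $H^1=0$, this shows that $G_k(X)$ is smooth at $\Sigma$ of the expected dimension. The main obstacle is the reduction from $P$ to $\Sigma$ in the $H^1$ vanishing, specifically making sure the restriction argument commutes correctly with the derivative map. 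I expect the kernel description $q\cdot H^0(\cO_P(1))$ to handle this.
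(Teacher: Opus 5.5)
Your proof is correct, and its skeleton is the paper's. Both use the sequences for $\Sigma\subset P\subset X$, the vanishing $H^1(\cN_{P/X})=0$ extracted from smoothness and expected dimension of $F_{k+1}(X)$, and $H^1(\cO_\Sigma(2))=0$.

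The only real difference is how you pass from $P$ to $\Sigma$. Both arguments show that $H^1(\cN_{P/X})\to H^1(\cN_{P/X}|_\Sigma)$ is surjective.
\begin{itemize}
\item The paper twists the defining sequence by $\cO(-2)$ to get $H^2(\cN_{P/X}(-2))=0$. It then uses $0\to\cN_{P/X}(-2)\to\cN_{P/X}\to\cN_{P/X}|_\Sigma\to 0$.
\item You restrict the locally split sequence $0\to\cN_{P/X}\to\cO_P(1)^{n-k}\to\cO_P(3)\to 0$ to $\Sigma$. Via $H^1(\cO_\Sigma(1))=0$, you identify $H^1(\cN_{P/X}|_\Sigma)$ with a cokernel, which is a quotient of the corresponding cokernel on $P$ because $H^0(\cO_P(3))\to H^0(\cO_\Sigma(3))$ is onto.
\end{itemize}
Your version avoids the twisted bundle; the paper's avoids any map-level bookkeeping. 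Neither buys much over the other.

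The ``main obstacle'' you flag is not one. The square relating the $P$- and $\Sigma$-level maps commutes, and $H^0(\cO_P(1))^{n-k}\to H^0(\cO_P(3))\to H^0(\cO_\Sigma(3))$ is a composite of surjections. Hence the map on $\Sigma$ is surjective, and the kernel description $q\cdot H^0(\cO_P(1))$ is never needed.

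Your final dimension-count paragraph should simply be deleted. Since $\Sigma\subset X$ is a local complete intersection, obstructions lie in $H^1(\cN_{\Sigma/X})$. Its vanishing alone gives smoothness of $G_k(X)$ at $[\Sigma]$, of dimension $h^0(\cN_{\Sigma/X})$, which is exactly how the paper concludes.

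As written, that paragraph would not stand on its own:
\begin{itemize}
\item Constancy of $\chi$ identifies $\chi(\cN_{\Sigma/X})$ with $\dim U$ only if $h^0=\chi$ at points of $U$. You have not shown this, and the sequences you invoke require $P\subset X$.
\item Equating $\dim T_{[\Sigma]}$ with ``the dimension of $U$ near $Z$'' tacitly needs $Z\subset\bar U$. The paper deduces this only afterwards, in Lemma \ref{flippinglemma}, using the present lemma.
\item $\dim Y'$ is misstated: $\bbP(\cQ_{k+1})$ is a $\bbP^{n-k}$-bundle, so $\dim Y'=\dim F_k(X)+n-k$.
\end{itemize}
With that correction the numbers do agree: $h^0(\cN_{\Sigma/X})=\binom{k+3}{2}+\dim F_{k+1}(X)+k+1=\dim Y'$, matching the count in Lemma \ref{sectionlemmacubics}. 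This is a consistency check, not part of the proof.
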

	\begin{proof}
		For each $k$-dimensional quadric $\Sigma \in Z$, the $(k+1)$-plane $\langle \Sigma \rangle$ spanned by the quadric must lie within the cubic. In particular, the inclusion $\Sigma \subset \langle \Sigma \rangle$ and the inclusion $\langle \Sigma \rangle \subset X$ are both local complete intersections, so $\Sigma \subset X$ is also a local complete intersection. Hence by deformation theory, it suffices to check that $h^1(\cN_{\Sigma/X}) = 0$. \par 
		To calculate this, since each map is a local complete intersection, we have the two normal bundle sequences 
		\begin{equation}
			\begin{gathered}
				0 \to \cN_{\Sigma/\langle \Sigma \rangle} \simeq \cO_\Sigma(2) \to \cN_{\Sigma/X} \to \cN_{\langle \Sigma \rangle/X}|_\Sigma \to 0 \\
				0 \to \cN_{\langle \Sigma \rangle/X} \to \cN_{\langle \Sigma \rangle/\bbP^{n+1}} \simeq \cO_{\langle\Sigma\rangle}(1)^{\oplus(n-k)}  \to \cN_{X/\bbP^{n+1}}|_{\Span{\Sigma}} \simeq \cO_{\Span{\Sigma}}(3) \to 0. \label{nmbdlsq1}
			\end{gathered}
		\end{equation}
		Since the Fano variety of $(k+1)$-planes on $X$ is smooth of the expected dimension, it follows that \[\dim T_{F_{k+1}(X),\Span{\Sigma}} = h^0(\cN_{\Span{\Sigma}/X}) = (k+2)(n-k) - {k+4 \choose 3},\]
		and an easy dimension calculation shows that the second map in the induced sequence on $H^0$ from the second sequence in \eqref{nmbdlsq1}, given by 
		\[0 \to H^0(\cN_{\Span{\Sigma}/X}) \to H^0(\cO_{\Span{\Sigma}}(1))^{\oplus(n-k)} \to H^0(\cO_{\Span{\Sigma}}(3)),\]
		must necessarily be surjective, forcing $h^1(\cN_{\Span{\Sigma}/X}) = 0$. Twisting the second sequence of \eqref{nmbdlsq1} by $(-2)$ and taking cohomology, we find the exact sequence
		\[0 = H^1(\cO_{\Span{\Sigma}}(1)) \to H^2(\cN_{\Span{\Sigma}/X}(-2)) \to H^2(\cO_{\Span{\Sigma}}(-1))^{\oplus(n-k)} = 0,\]
		forcing $h^2(\cN_{\Span{\Sigma}/X}(-2)) = 0$. On the other hand, we have the restriction sequences 
		\begin{gather*}
			0 \to \cN_{\langle \Sigma \rangle/X}(-2) \to \cN_{\langle \Sigma \rangle/X} \to \cN_{\langle \Sigma \rangle/X}|_\Sigma \to 0 \\
			0 \to \cO_{\Span{\Sigma}} \to \cO_{\Span{\Sigma}}(2) \to \cO_\Sigma(2) \to 0.
		\end{gather*}
		Taking cohomology immediately implies that $h^1(\cO_\Sigma(2)) = h^1(\cN_{\Span{\Sigma}/X}|_\Sigma) = 0$, and hence using the first sequence of \eqref{nmbdlsq1} we find $h^1(\cN_{\Sigma/X}) = 0$. \par 
	\end{proof}
	In order to construct our map $\textbf{s} : \cO_{Y'}(1) \to \Sym^2 \cU_{k+2}^\vee$, we first observe that the biprojective bundle \[\bbP(\cU_{k+2}^\vee) \times_G \bbP(\cU_{k+2})\] carries a tautological $(1,1)$-divisor. If we pull this back to $Y' \times_G \bbP(\cU_{k+2})$ along the map $Y' = \bbP(\cQ_{k+1}) \hookrightarrow \bbP(\cU_{k+2}^\vee)$, one can observe that the defining $(0,3)$-divisor coming from the defining equation of the cubic $X$ vanishes along the tautological $(1,1)$-divisor. In particular, the defining equation $\cO \to \cO(0,3)$ factors as 
	\[\cO \to \cO(-1,2) \to \cO(0,3)\]
	where the second map is multiplication by the defining section of the tautological (1,1)-divisor. Pushing forward along the map $Y' \times_G \bbP(\cU_{k+2}) \to Y'$, we get a factorization of the defining equation for the cubic, 
	\begin{equation}
		\cO \to \cO_{Y'}(-1) \otimes \Sym^2\cU^\vee_{k+2} \to \Sym^3 \cU_{k+2}^\vee, \label{cubicsectiondef1}
	\end{equation}
	and we define $\textbf{s} : \cO_{Y'}(1) \to \Sym^2 \cU_{k+2}^\vee$ to be up to twisting given by the first map in this factorization. \par 
	Geometrically, given a flag $(\bbP(U_{k+1}), \bbP(U_{k+2})) \in Y'$, $\textbf{s}$ picks out the quadric on $\bbP(U_{k+2})$ given by dividing the restriction to $\bbP(U_{k+2})$ of the defining equation for $X$ by the equation for the hyperplane $\bbP(U_{k+1}) \subset \bbP(U_{k+2})$. 
	\begin{lemma} \label{sectionlemmacubics}
		The section $\textbf{s}$ is injective with locally free cokernel along $U'$ and induces an isomorphism onto $U$. Moreover, $Z' = V(\textbf{s})$ has codimension equal to $\on{rank} \Sym^2 \cU_{k+2}^\vee$. 
	\end{lemma}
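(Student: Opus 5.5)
We work pointwise over $Y' = \bbP(\cQ_{k+1})$, writing a point as a flag $U_{k+1} \subset U_{k+2}$ with $\bbP(U_{k+1}) \subset X$, and writing $f$ for the cubic equation of $X$. Restricting to $\bbP(U_{k+2})$, the construction \eqref{cubicsectiondef1} gives $f|_{U_{k+2}} = h \cdot \textbf{s}$. Here $h \in U_{k+2}^\vee$ is the linear form cutting out $U_{k+1}$, well defined up to scalar as the fiber of $\cO_{Y'}(-1)$, and $\textbf{s} \in \Sym^2 U_{k+2}^\vee$.

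The first step identifies where $\textbf{s}$ vanishes. Since $h \neq 0$, the section $\textbf{s}$ vanishes at the flag if and only if $f|_{U_{k+2}} = 0$, that is, if and only if $\bbP(U_{k+2}) \subset X$. Hence $V(\textbf{s}) = Y' \times_G F = Z'$ set-theoretically. Off $Z'$ the map $\cO_{Y'}(1) \to \Sym^2 \cU_{k+2}^\vee$ is a nowhere-vanishing map from a line bundle, so it is injective with locally free cokernel along $U'$.

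The next step is to show that the induced map $U' \to \bbP(\Sym^2\cU_{k+2}^\vee)$ is an isomorphism onto $U$. For the image, let $\Sigma = V(\textbf{s}) \subset \bbP(U_{k+2})$. It is a $k$-dimensional quadric contained in $X \cap \bbP(U_{k+2}) = \bbP(U_{k+1}) \cup \Sigma$, so it lies in $G_k(X)$. Its span is not contained in $X$, so it lies in $U$. For the inverse on $U$, take a quadric $\Sigma = V(q) \subset \bbP(U_{k+2})$ with $\Sigma \subset X$ and $\bbP(U_{k+2}) \not\subset X$. The restriction $f|_{U_{k+2}}$ is then a nonzero cubic vanishing on $\Sigma$, so $q$ divides it, and we write $f|_{U_{k+2}} = h q$ with $h$ a nonzero linear form, unique up to scalar. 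Then $\bbP(\ker h) \subset X$, which recovers the flag. I would make this work in families: on $U$ the restriction $f|_{\cU_{k+2}}$ is a nowhere-vanishing section of $\Sym^3\cU_{k+2}^\vee$ that is divisible by the tautological quadric. Division in the polynomial ring is flat in families once the quotient is nonzero, so it defines a morphism $U \to \bbP(\cU_{k+2}^\vee)$ landing in $Y'$, and this morphism is inverse to the given one. Scheme-theoretic injectivity follows from the uniqueness of factorization, so the two maps are inverse isomorphisms.

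The codimension claim is a dimension count, using that $Z' = F_{k,k+1}(X) \simeq \bbP(\cU_{k+2}^\vee|_F)$. We have
\[
\dim Y' = \dim F_k(X) + (n+1-k) - 1, \qquad \dim Z' = \dim F_{k+1}(X) + k + 1,
\]
and we must check that
\[
\dim Y' - \dim Z' = \binom{k+3}{2}
\]
using the expected-dimension formulas. This reduces to the identity
\[
\binom{k+4}{3} - \binom{k+3}{3} = \binom{k+3}{2},
\]
plus linear terms that should cancel; the linear terms $(k+1)(n+1-k) + n - k$ and $(k+2)(n-k) + k + 1$ are equal, and the identity holds. When $F = \varnothing$ the codimension statement is vacuous.

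I expect the main obstacle to be the scheme-theoretic statements rather than the set-theoretic ones. One is that the morphism $U' \to U$ is an isomorphism, and not merely a bijection; this needs the relative division argument in families, or alternatively a normality argument on $U$, which is not yet known to be smooth. The other is that $V(\textbf{s})$ equals $Z'$ scheme-theoretically; the codimension count shows $\textbf{s}$ is a regular section, which is exactly what Lemma \ref{flippinglemma}(2) needs. For the first I would try the explicit inverse morphism via relative division described above.
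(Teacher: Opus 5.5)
Your overall approach is the paper's. The factorization $f|_{\cU_{k+2}} = h\cdot\textbf{s}$ gives nonvanishing of $\textbf{s}$ off $Z'$. The inverse $U\to U'$ comes from dividing the cubic by the tautological quadric (the paper's $\textbf{s}'$). Uniqueness of the factorization shows the maps are mutually inverse, and the codimension is the same dimension count.

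The gap is the identification $V(\textbf{s}) = Z'$. This is part of the statement, and you establish it only set-theoretically. Your proposed way around it is that the codimension count makes $\textbf{s}$ a regular section, ``which is exactly what Lemma \ref{flippinglemma}(2) needs''. That does not work. Regularity on the smooth $Y'$ only makes $V(\textbf{s})$ Cohen--Macaulay of the right codimension; it can still be nonreduced (think of $x^2$ on $\mathbb{A}^1$). Lemma \ref{flippinglemma}(2) genuinely needs the zero scheme of $\textbf{s}$ to be the smooth scheme $Z' = Y'\times_G F$, in three places:
\begin{enumerate}
\item the embedding $\Bl_{Z'}Y'\hookrightarrow \bbP(\cE)\times_G Y'$ is really the blowup of the zero scheme of $\textbf{s}$;
\item the exceptional divisor is identified with the pullback of $F$;
\item the Koszul computation of $\cN_{Z'/Y'}$ is a computation for $V(\textbf{s})$.
\end{enumerate}

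The fix uses only what you already used pointwise. By definition $Z' = Y'\times_G F$ is the zero scheme of $f|_{\cU_{k+2}}$. This section is the image of $\textbf{s}$ under multiplication by the tautological form $h$, a map $\cO_{Y'}(-1)\otimes\Sym^2\cU_{k+2}^\vee\to\Sym^3\cU_{k+2}^\vee$. Since $h$ vanishes nowhere and the polynomial ring is a domain, this map is fiberwise injective, hence a locally split injection of bundles. So the components of $\textbf{s}$ and of $h\cdot\textbf{s}$ generate the same ideal, i.e.\ $V(\textbf{s}) = Z'$ as schemes; this is how the paper argues. Alternatively, $V(\textbf{s})\subseteq V(h\cdot\textbf{s}) = Z'$ always holds scheme-theoretically. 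Since $Z'\simeq\bbP(\cU_{k+2}^\vee|_F)$ is reduced with the same support, the ideals are sandwiched and coincide. Only after this does your dimension count show that $\textbf{s}$ is a regular section with zero scheme $Z'$.

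A smaller imprecision: ``division is flat in families'' should be replaced by a Cartier-divisor argument. The universal quadric is a relative effective Cartier divisor $D$ on $Y\times_G\bbP(\cU_{k+2})$ contained in $X$. Hence $f$ is a section of $\cO(0,3)\otimes\cO(-D)$, and pushing forward produces $\textbf{s}'$ globally on $Y$.
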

	\begin{proof}
		Recall that we defined $U'$ as the complement to $Z' = Y' \times_G F$; in other words, it is precisely the locus on $Y' = \bbP(\cQ_{k+1})$ where the defining equations of $F = F_{k+1}(X)$ do not vanish. However, $F_{k+1}(X)$ is defined on $G = \Gr(k+2,V)$ precisely by the vanishing of the section 
		\[\cO\to \Sym^3 \cU_{k+2}^\vee\] 
		coming from the equation of the cubic, so on $U'$ this map is fiberwise nonvanishing; so the section $\textbf{s}$, which factors this section, is also fiberwise nonvanishing. It then follows that $\textbf{s}$ is injective with locally free cokernel along $U'$, and hence defines a map $U' \to \bbP(\Sym^2 \cU_{k+2}^\vee)$. By construction this family of quadrics lies in the cubic $X$, and so it is clear that the image lands inside $U \subset G_k(X)$. \par 
		To check that this induced map gives an isomorphism onto $U$, we first observe that there is a natural choice of inverse map $U \to \bbP(\cU_{k+2}^\vee)$ induced by a section $\textbf{s}'$. As before, one can observe that the bundle $\bbP(\Sym^2\cU_{k+2}^\vee) \times_G \bbP(\cU_{k+2})$ carries a tautological divisor, which in this case is of type $(2,1)$; after restricting to $Y = G_k(X)$, since the $(0,3)$-divisor coming from the defining equation of the cubic vanishes along the tautological $(2,1)$-divisor, we find a factorization 
		\[\cO\to \cO(-1,1) \to \cO(0,3),\]
		and by pushing forward find a factorization
		\begin{equation}
			\cO\to \cO_Y(-1) \otimes\cU_{k+2}^\vee \to \Sym^3 \cU_{k+2}^\vee, \label{cubicsectiondef2}
		\end{equation}
		and let $\textbf{s}' : \cO_Y(1) \to \cU_{k+2}^\vee$ be the twist of the first map. Once again, since the equation of the cubic vanishes along this family of planes, the image of the induced map $U \to \bbP(\cU_{k+2}^\vee)$ lands inside of $U'$. \par 
		To check that the composition $U \to U' \to U$ is the identity, it suffices to check that the pullback of $\textbf{s} : \cO_{Y'}(1) \to \Sym^2 \cU_{k+2}^\vee$ along the map $U \to U'$ induced by $\textbf{s}'$ is the tautological quadric on $U \subset G_k(X)$. But both the tautological quadric and the pullback of $\textbf{s}$ can be realized as the residual family of quadrics to the family of planes coming from the section $\textbf{s}'$ in the family of cubics $U \times_G \bbP(\cU_{k+2}) \times_{\bbP(V)} X$, so these two families of quadrics must coincide. The composition $U' \to U \to U'$ is similar, and we conclude that we have isomorphisms between $U$ and $U'$.  \par 
		For the last claim, we first verify the equality $V(\textbf{s}) = Z'$. Indeed, $Z'$ is cut out by the pullback to $Y'$ of the defining section $\cO \to \Sym^3 \cU_{k+2}^\vee$ of $F = F_{k+1}(X)$ in $G$ which by the factorization \eqref{cubicsectiondef1} factors as 
		\[\cO \xrightarrow{\textbf{s}} \cO_{Y'}(-1) \otimes \Sym^2 \cU_{k+2}^\vee \to \Sym^3 \cU_{k+2}^\vee,\]
		where the second map itself factors as $\cO_{Y'}(-1) \otimes \Sym^2 \cU_{k+2}^\vee \to \cU_{k+2}^\vee \otimes \Sym^2 \cU_{k+2}^\vee \to \Sym^3 \cU_{k+2}^\vee$, where the first map comes from the tautological section on $\bbP(\cU_{k+2}^\vee)$ and the second comes from multiplication. But since the map $\cO_{Y'}(-1) \otimes \Sym^2 \cU_{k+2}^\vee \to \Sym^3 \cU_{k+2}^\vee$ is easily checked fiberwise to be an injection with locally free cokernel, we conclude that the composite section $\cO\to \Sym^3 \cU_{k+2}^\vee$ vanishes if and only if $\textbf{s}$ vanishes. \par 
		We finally conclude that $\dim Y' - \dim Z' = \on{rank} \Sym^2 \cU_{k+2}^\vee$ by observing that we have an identity of dimensions \begin{equation}
			\begin{aligned}
			\dim Z' &= (k+2)(n-k) - {k+4 \choose 3}+ k + 1 \\
			&= (k+1)(n+1-k) - {k+3 \choose 3} + (n - k) - {k+3 \choose 2} \\
			&= \dim Y' - \on{rank}\left(\Sym^2 \cU_{k+2}^\vee\right).
			\end{aligned}
		\end{equation}
	\end{proof}
	At this point, the proof of the theorem is immediate. 
	\begin{proof}[Proof of Theorem \ref{geomflipcubics}]
		Observe that we are in Situation \ref{flippingsituation} and combine Lemmas \ref{flippinglemma}, \ref{smoothnesslemmacubics} and \ref{sectionlemmacubics}. The statement of the theorem follows. 
	\end{proof}
	\subsection{The case of the complete intersection of two quadrics}
	The analogous theorem we prove for complete intersections of two quadrics is as follows: 
	\begin{theorem}
		\label{geomflipquartic}
		Fix an integer $0 \leq k < n$. Suppose $X = Q_1 \cap Q_2 \subset \bbP^{n+2}$ is a smooth complete intersection of two quadric hypersurfaces. Let $\fQ$ denote the total space of the quadric fibration for the pencil of quadrics $\langle Q_1, Q_2 \rangle$, and write $\OGr(k+2,\fQ)$ for the relative orthogonal Grassmannian parametrizing isotropic $(k+2)$-dimensional subspaces of $V$ for the quadric fibration. Let $W \subset \Sym^2 V^\vee$ be the 2-dimensional linear subspace of the space of all quadrics corresponding to the pencil $\langle Q_1, Q_2 \rangle$. Let $\cU_{k+2}$ denote the tautological subbundle of rank $k+2$ on $\Gr(2,V_5)$. Then we have a standard flip diagram 
		\begin{equation}
			\begin{tikzcd}
				\bbP(\Sym^2 \cU_{k+2}^\vee|_{F_{k+1}(X)}) \arrow[d] \arrow[r, hook]& G_k(X) \arrow[leftrightarrow, r, dashed, "\text{flip}"] & \OGr(k+2,\fQ) \arrow[d] & \arrow[l, hook'] F_{k+1}(X) \times \bbP(W) \arrow[d]\\
				F_{k+1}(X) & &\langle Q_1, Q_2 \rangle = \bbP(W) & F_{k+1}(X)
			\end{tikzcd}
		\end{equation}
	\end{theorem}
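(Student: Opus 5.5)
We will follow the template of the cubic case: place ourselves in Situation \ref{flippingsituation} and then apply Lemma \ref{flippinglemma}. The data are as follows. Take $G=\Gr(k+2,V)$ with $V$ of dimension $n+3$, and $\cE=\Sym^2\cU_{k+2}^\vee$. Let $Y=G_k(X)\subset \bbP_G(\Sym^2\cU_{k+2}^\vee)$ and $F=F_{k+1}(X)$, so that $Z=\bbP(\Sym^2\cU_{k+2}^\vee|_F)$. On the other side take $\cE'=W^\vee\otimes\cO$, so that $\bbP(\cE')=G\times\bbP(W)$. Let $Y'=\OGr(k+2,\fQ)\subset G\times \bbP(W)$ be the locus of pairs $(U_{k+2},[w])$ with $U_{k+2}$ isotropic for the quadric $w\in W$. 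Then $Z'=Y'\times_G F$. A $(k+1)$-plane lying in $X$ is isotropic for every quadric of the pencil, so $Z'=F\times\bbP(W)$, which is a trivial projective subbundle of $\bbP(\cE'|_F)$.

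Before checking the hypotheses we need some standard input. $F_{k+1}(X)$ is smooth of expected dimension $(k+2)(n+1-k)-2\binom{k+3}{2}$, or empty; this is classical, e.g.\ Reid or Debarre--Manivel for intersections of two quadrics. $Y'$ is smooth and irreducible, because $\OGr(k+2,\fQ)$ is the zero locus of the regular section of $\cO_{\bbP(W)}(1)\otimes\Sym^2\cU_{k+2}^\vee$ induced by $W\subset\Sym^2V^\vee$. Smoothness at points over singular quadrics of the pencil requires care, but the singular fibres have corank one with distinct vertices since $X$ is smooth. We will verify smoothness by a Jacobian computation at a point $(U,w)$, using that the differential in the $w$-direction is the restriction of the other quadric $w'|_U$.

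The section $\textbf{s}:\cO_{Y'}(1)\to\Sym^2\cU_{k+2}^\vee$ is the composite of $\cO_{\bbP(W)}(-1)\hookrightarrow W\otimes\cO$ (after twisting appropriately) with the restriction map $W\otimes\cO\to\Sym^2\cU_{k+2}^\vee$. Modulo $[w]$, this sends a point $(U,[w])$ to the restriction $w'|_U$ of the complementary quadric. Thus $V(\textbf{s})$ is exactly the locus where all of $W$ restricts to zero on $U$, which is $Z'$. Its codimension is
\[\dim Y'-\dim Z' = (k+2)(n+1-k)+1-\binom{k+3}{2} - \bigl((k+2)(n+1-k)-2\binom{k+3}{2}+1\bigr)=\binom{k+3}{2},\]
which equals $\on{rank}\cE$. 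On $U'$ the section is nonvanishing, so it defines a morphism $U'\to\bbP(\cE)$ whose image is the quadric $\bbP(U)\cap Q_{w'}=\bbP(U)\cap X$. This quadric lies in $X$ and spans $\bbP(U)$, provided it is a genuine $k$-dimensional quadric of the right Hilbert polynomial, which holds since $w'|_U\neq0$.

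For the inverse $U\to U'$, start from a quadric $\Sigma\in U$ with $\Span{\Sigma}\not\subset X$. The restriction map $W\to\Sym^2(\Span{\Sigma})^\vee$ then has one-dimensional image spanned by the equation of $\Sigma$ in $\Span{\Sigma}$, and hence a one-dimensional kernel, which gives a point of $\bbP(W)$. To see that the image is exactly rank one, note that it is nonzero because $\Span{\Sigma}\not\subset X$. It is at most one-dimensional because every element of $W$ vanishes on $\Sigma$, and a quadric on $\bbP^{k+1}$ vanishing on the quadric hypersurface $\Sigma$ is a multiple of the equation of $\Sigma$. In families this is the rank-one map $W\otimes\cO_Y\to\Sym^2\cU_{k+2}^\vee$ factoring through $\cO_Y(-1)$, and its kernel gives $U\to\bbP(W)$. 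Checking that the two compositions are the identity goes as in Lemma \ref{sectionlemmacubics}, via the universal families.

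The remaining hypothesis is smoothness of $G_k(X)$ along $Z$, which we expect to be the main computational step. We would mimic Lemma \ref{smoothnesslemmacubics}. For $\Sigma\in Z$ we have the sequences
\[0\to\cO_\Sigma(2)\to\cN_{\Sigma/X}\to\cN_{\Span{\Sigma}/X}|_\Sigma\to0\]
and
\[0\to\cN_{\Span{\Sigma}/X}\to\cO(1)^{\oplus(n+1-k)}\to\cO(2)^{\oplus2}\to0\]
on $\Span{\Sigma}$. Smoothness of $F_{k+1}(X)$ of expected dimension makes $H^0$ of the middle map surjective, so $h^1(\cN_{\Span{\Sigma}/X})=0$. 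Twisting by $-2$ gives $h^2(\cN_{\Span{\Sigma}/X}(-2))=0$, since $H^1(\cO)=H^2(\cO(-1))=0$; for $k+1=1$ one checks the line case directly. From these, $h^1(\cN_{\Sigma/X})=0$ follows exactly as before. One also needs $\Sigma\subset X$ to be lci, which holds as in the cubic case.

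Lemma \ref{flippinglemma}(2) then gives the standard flip with $\cN_{Z'/Y'}\simeq\cO(-1)\otimes\Sym^2\cU_{k+2}^\vee$, and irreducibility of $G_k(X)$ when $F\neq\varnothing$. The places where we are least sure are the irreducibility of $\OGr(k+2,\fQ)$ for small $n$ relative to $k$, and the smoothness of $F_{k+1}(X)$. If either fails for general $k<n$, we would fall back on Lemma \ref{flippinglemma}(1) in the cases where $F=\varnothing$.
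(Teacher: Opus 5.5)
Your proposal is correct and is essentially the paper's proof. It uses the same data for Situation \ref{flippingsituation}, the same kernel construction of the inverse on the open set $U\subset G_k(X)$, the same codimension count, and the same normal-bundle argument for smoothness along $Z$. The section $\textbf{s}$ is also the same, but state it as the map $W/\cO_{Y'}(-1)\cong\cO_{Y'}(1)\to\Sym^2\cU_{k+2}^\vee$ induced by isotropy; as you wrote it, the composite with the tautological line itself vanishes on $Y'$.

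Your remaining deviations are harmless. Your Jacobian check of smoothness of $\OGr(k+2,\fQ)$, which replaces the paper's citation of Debarre--Kuznetsov, works because the vertex $v_0$ of a singular member $Q_w$ satisfies $w'(v_0)\neq 0$ by smoothness of $X$, so $w'|_{U_{k+2}}$ fills the one-dimensional cokernel when $v_0\in U_{k+2}$. The irreducibility of $Y'$ that you flag, which the paper also leaves unchecked, holds when $F\neq\varnothing$: then $k+2<(n+3)/2$, so the general fibre $\OGr(k+2,n+3)$ is irreducible, and every fibre, including the singular ones, has dimension $\dim Y'-1$. For $F=\varnothing$ irreducibility can genuinely fail; for example $n=2$, $k=1$ gives ten disjoint copies of $\bbP^1$. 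As you say, Lemma \ref{flippinglemma}(1) does not need it there.
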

	\begin{remark}
		If $n < k - 1 + \frac{1}{2}(k+3)$ so that the orthogonal Grassmannian fibration $\OGr(k+2,\fQ) = \varnothing$, $G_k(X)$ will be empty as well. Otherwise, if $k - 1 + (k+3) > n \geq k - 1 + \frac{1}{2}(k+3)$ so that $F_{k+1}(X) = \varnothing$, the flip becomes an isomorphism $G_k(X) \simeq \OGr(k+2,\fQ)$.  \par 
		This result gives an alternate proof for \cite[Proposition A.7]{colkuz}, which used the machinery of Springer resolutions to prove a large part of the same result above. 
	\end{remark}
	Due to a theorem of Reid \cite[Theorem 2.6]{reidthesis}, as long as $X$ is smooth it follows that $F_{k+1}(X)$ is also smooth of the expected dimension $(k+2)(n-k+1) - 2{k+3 \choose 2}$. On the other hand, by \cite[Lemma 3.6]{ogsmoothness}, the fibration $\fQ \to \bbP(W)$ is 1-regular in the sense of Debarre--Kuznetsov at all points of $S$, and by \cite[Lemma 3.9]{ogsmoothness} the total space of the orthogonal Grassmannian fibration $\OGr(k+2,\fQ)$ is smooth of the expected dimension $(k+2)(n-k+1) - {k+3 \choose 2} + 1$. \par 
	As before, we show that this fits into the regime of Situation \ref{flippingsituation}. We take: 
	\begin{itemize}
		\item $G = \Gr(k+2,V)$, with vector bundles $\Sym^2 \cU_{k+2}^\vee$ and $W \otimes \cO$ (in the sequel, we will write this simply as $W$ for convenience),
		\item $Y = G_k(X)$ and $Y' = \OGr(k+2,\fQ)$, with inclusions $G_k(X) \hookrightarrow G_k(\bbP(V)) \simeq \bbP_G(\Sym^2 \cU_{k+2}^\vee)$ and $\OGr(k+2,\fQ) \hookrightarrow \Gr(k+2,V) \times \bbP(W)$, and 
		\item $F = F_{k+1}(X)$, with $Z := Y \times_G F \simeq \bbP(\Sym^2 \cU_{k+2}^\vee|_F)$ the quadrics whose linear spans lie within $X$, and $Z' := Y' \times_G F \simeq F_{k+1}(X) \times \bbP(W)$ consisting of isotropic planes for the pencil which are simultaneously isotropic for every quadric in $W$. 
	\end{itemize}
	\begin{lemma}\label{smoothnesslemmaquartic}
		The variety $G_k(X)$ is smooth along the subvariety $Z$. 
	\end{lemma}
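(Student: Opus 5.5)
I will follow the template of Lemma \ref{smoothnesslemmacubics}. Fix $\Sigma \in Z$, so $\Span{\Sigma} = \bbP^{k+1}$ lies in $X$. The inclusions $\Sigma \subset \Span{\Sigma}$ and $\Span{\Sigma} \subset X$ are local complete intersections, hence so is $\Sigma \subset X$. By deformation theory it therefore suffices to show $h^1(\cN_{\Sigma/X}) = 0$.

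The two normal bundle sequences are
\[0 \to \cO_\Sigma(2) \to \cN_{\Sigma/X} \to \cN_{\Span{\Sigma}/X}|_\Sigma \to 0\]
and
\[0 \to \cN_{\Span{\Sigma}/X} \to \cO_{\Span{\Sigma}}(1)^{\oplus(n-k+1)} \to \cO_{\Span{\Sigma}}(2)^{\oplus 2} \to 0.\]
In the second sequence the last term is $\cN_{X/\bbP^{n+2}}|_{\Span{\Sigma}}$, since $X$ is the complete intersection of two quadrics.

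First, $h^1(\cN_{\Span{\Sigma}/X}) = 0$. By Reid's theorem $F_{k+1}(X)$ is smooth of the expected dimension $(k+2)(n-k+1) - 2{k+3 \choose 2}$. This number equals $h^0(\cO(1))^{\oplus(n-k+1)} - h^0(\cO(2)^{\oplus 2})$. So the map on $H^0$ from the second sequence, whose kernel is the tangent space $H^0(\cN_{\Span{\Sigma}/X})$, must be surjective. Since $H^1(\cO_{\Span{\Sigma}}(1)) = 0$, this gives $H^1(\cN_{\Span{\Sigma}/X}) = 0$.

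Next, twist the second sequence by $\cO(-2)$. This gives
\[0 = H^1(\cO_{\Span{\Sigma}}^{\oplus 2}) \to H^2(\cN_{\Span{\Sigma}/X}(-2)) \to H^2(\cO_{\Span{\Sigma}}(-1))^{\oplus(n-k+1)} = 0.\]
The outer terms vanish because $H^1(\cO_{\bbP^{k+1}}) = 0$ and $H^2(\cO_{\bbP^{k+1}}(-1)) = 0$ on a projective space of dimension $k+1 \ge 1$. Hence $h^2(\cN_{\Span{\Sigma}/X}(-2)) = 0$.

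Now use the restriction sequence
\[0 \to \cN_{\Span{\Sigma}/X}(-2) \to \cN_{\Span{\Sigma}/X} \to \cN_{\Span{\Sigma}/X}|_\Sigma \to 0.\]
Together with the two vanishings above, it gives $h^1(\cN_{\Span{\Sigma}/X}|_\Sigma) = 0$. Similarly, from $0 \to \cO_{\Span{\Sigma}} \to \cO_{\Span{\Sigma}}(2) \to \cO_\Sigma(2) \to 0$ we get $h^1(\cO_\Sigma(2)) = 0$. The first normal bundle sequence then yields $h^1(\cN_{\Sigma/X}) = 0$.

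The only step needing care is the dimension count forcing surjectivity on $H^0$. This is essentially the content of Reid's smoothness and expected-dimension result for $F_{k+1}(X)$, which I will cite directly. The remaining work is bookkeeping to check that the vanishings hold for all $k \ge 0$; the edge case $k = 0$ ($\Span{\Sigma}$ a line) should be checked but works identically.
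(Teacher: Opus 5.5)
Your proposal is correct and follows the paper's proof essentially step for step. You reduce to $h^1(\cN_{\Sigma/X})=0$ via lci deformation theory, use Reid's smoothness and expected-dimension result to force surjectivity on $H^0$ and hence $h^1(\cN_{\Span{\Sigma}/X})=0$, kill $h^2(\cN_{\Span{\Sigma}/X}(-2))$ with the $(-2)$-twist, and finish with the two restriction sequences; your twisted sequence also correctly has $H^2(\cO_{\Span{\Sigma}}(-1))$ on the right, where the paper's text has a typo ($H^0$).
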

	\begin{proof}
		We proceed in a similar manner to the case of cubics. Let $\Sigma \in Z$, and observe that the $(k+1)$-plane $\langle \Sigma \rangle$ spanned by $\Sigma$ must lie inside of $X$. As before, since $\Sigma \subset X$ is a local complete intersection, to check the smoothness of $G_k(X)$ along $Z$, it suffices to check that $h^1(\cN_{\Sigma/X}) = 0$. \par 
		We observe again the existence of two normal bundle sequences 
		\begin{equation}
			\begin{gathered}
				0 \to \cN_{\Sigma/\langle \Sigma \rangle} \simeq \cO_{\Sigma}(2) \to \cN_{\Sigma/X} \to \cN_{\langle \Sigma \rangle/X}|_\Sigma \to 0 \\
				0 \to \cN_{\langle \Sigma \rangle/X} \to \cN_{\langle \Sigma \rangle/\bbP^{n+2}} \simeq \cO_{\langle \Sigma \rangle}(1)^{\oplus(n-k+1)} \to \cN_{X/\bbP^{n+2}}|_{\langle \Sigma \rangle} \simeq \cO_{\langle \Sigma \rangle}(2)^{\oplus2} \to 0.
				\label{nmbdlseqquartic1}
			\end{gathered}
		\end{equation}
		Using that the Fano variety of $(k+1)$-planes on $X$ is smooth of the expected dimension, it follows that 
		\[\dim T_{F_{k+1}(X),\langle \Sigma \rangle} = h^0(\cN_{\langle \Sigma \rangle/X}) = (k+2)(n-k+1) - 2{k+3 \choose 2},\]
		and hence by counting dimensions we can verify the identity 
		\[h^0(\cN_{\langle \Sigma \rangle/X}) + h^0(\cN_{X/\bbP^{n+2}}|_{\langle \Sigma \rangle}) = h^0(\cO_{\langle \Sigma \rangle}(1))^{\oplus(n-k+1)}.\]
		It follows that the last map in the sequence on $H^0$ coming from the second sequence of \eqref{nmbdlseqquartic1} 
		\[0 \to H^0(\cN_{\langle \Sigma \rangle/X}) \to H^0(\cO_{\langle \Sigma \rangle}(1))^{\oplus(n-k+1)} \to H^0(\cO_{\langle \Sigma \rangle}(2))^{\oplus 2}\]
		is surjective, and so $h^1(\cN_{\langle \Sigma \rangle/X}) = 0$. Twisting the second sequence of \eqref{nmbdlseqquartic1} by $(-2)$ and taking cohomology, we see that 
		\[0 = H^1(\cO_{\langle \Sigma \rangle})^{\oplus 2} \to H^2(\cN_{\langle \Sigma \rangle/X}(-2)) \to H^0(\cO_{\langle \Sigma \rangle}(-1))^{\oplus(n-k+1)} = 0,\]
		forcing $h^2(\cN_{\langle \Sigma \rangle}(-2)) = 0$. Then using the restriction sequence 
		\[0 \to \cN_{\langle \Sigma \rangle/X}(-2) \to \cN_{\langle \Sigma \rangle/X} \to \cN_{\langle \Sigma \rangle/X}|_\Sigma \to 0,\]
		we find that $h^1(\cN_{\langle \Sigma \rangle/X}|_\Sigma) = 0$, and as $h^1(\cO_\Sigma(2)) = 0$ we conclude using the first sequence of \eqref{nmbdlseqquartic1} that $h^1(\cN_{\Sigma/X}) = 0$ as well. 
	\end{proof}
	Next, we construct a map $\textbf{s} : \cO_{Y'}(1) \to \Sym^2 \cU_{k+2}^\vee$. By construction on $Y' = \OGr(k+2,\fQ)$ we have a tautological subbundle $\cO_{Y'}(-1) \to W$ pulled back from $\bbP(W)$ with cokernel $\cO_{Y'}(1)$. If we compose with the restriction map $W \to \Sym^2 V^\vee \to \Sym^2 \cU_{k+2}^\vee$, the isotropy condition defining $\OGr(k+2,\fQ)$ implies that the composition 
	\[\cO_{Y'}(-1) \to W \to \Sym^2 \cU_{k+2}^\vee\]
	must vanish, and so the map $W \to \Sym^2 \cU_{k+2}^\vee$ factors through the cokernel of the first map. In particular, since the cokernel is isomorphic to $\cO_{Y'}(1)$, we get a factorization 
	\begin{equation}
		W \to \cO_{Y'}(1) \to \Sym^2 \cU_{k+2}^\vee \label{quarticsectiondef1}
	\end{equation}
	and then define $\textbf{s} : \cO_{Y'}(1) \to \Sym^2 \cU_{k+2}^\vee$ as our desired map.\par 
	Geometrically, for a point $(\bbP(U_{k+2}) \subset Q) \in \OGr(k+2,\fQ)$, $\textbf{s}$ picks out the quadric on $\bbP(U_{k+2})$ coming from taking any quadric equation in the pencil $\langle Q_1, Q_2 \rangle$ distinct from $Q$ and restricting it to $\bbP(U_{k+2})$. 
	\begin{lemma} \label{sectionlemmaquartics}
		The section $\textbf{s}$ is injective with locally free cokernel along $U'$, and induces an isomorphism onto $U$. Moreover, $Z' = V(\textbf{s})$ has codimension equal to $\on{rank} \Sym^2 \cU_{k+2}^\vee$. 
	\end{lemma}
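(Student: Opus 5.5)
I will follow the template of Lemma \ref{sectionlemmacubics}, replacing the residual-cubic construction with the linear algebra of the pencil $W$.

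\emph{Step 1: the vanishing locus of $\textbf{s}$.} By \eqref{quarticsectiondef1}, $\textbf{s}$ is the map induced on the quotient $W/\cO_{Y'}(-1)\simeq\cO_{Y'}(1)$ by the restriction map $r: W\otimes\cO\to\Sym^2\cU_{k+2}^\vee$. At a point $(U_{k+2},[w])\in Y'$ we have $w|_{U_{k+2}}=0$ by the isotropy condition. Hence $\textbf{s}$ vanishes there exactly when every quadric of $W$ vanishes on $U_{k+2}$, that is, when $\bbP(U_{k+2})\subset X$. So, set-theoretically, $V(\textbf{s})=Y'\times_G F=Z'\simeq F\times\bbP(W)$. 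To get the scheme-theoretic statement, I observe that on $G$ the subscheme $F$ is the zero locus of $r$, viewed as a section of $W^\vee\otimes\Sym^2\cU_{k+2}^\vee$. On $Y'$ this pulled-back section factors through $\textbf{s}$ via the surjection $W\to\cO_{Y'}(1)$, and it vanishes if and only if $\textbf{s}$ does, because the component along $\cO_{Y'}(-1)$ is already zero. The codimension claim then follows by counting dimensions with the stated values $\dim Y'=(k+2)(n-k+1)-\binom{k+3}{2}+1$ and $\dim F_{k+1}(X)=(k+2)(n-k+1)-2\binom{k+3}{2}$:
\[\dim Y'-\dim Z'=\binom{k+3}{2}+1-1=\on{rank}\Sym^2\cU_{k+2}^\vee.\]

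\emph{Step 2: the map $U'\to U$.} Off $Z'$ the section $\textbf{s}$ is a nowhere-vanishing map from a line bundle, so it is injective with locally free cokernel. It therefore defines $U'\to\bbP(\Sym^2\cU_{k+2}^\vee)$, sending $(U_{k+2},[w])$ to $[w'|_{U_{k+2}}]$ for any $w'\notin\Span{w}$. The associated quadric is $\bbP(U_{k+2})\cap Q_{w'}=\bbP(U_{k+2})\cap X$, since $\bbP(U_{k+2})\subset Q_w$. It is a genuine $k$-dimensional quadric, because $w'|_{U_{k+2}}\neq 0$. So the image lies in $G_k(X)$, and since its span is $\bbP(U_{k+2})\not\subset X$, it lies in $U$.

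\emph{Step 3: the inverse $U\to U'$.} On $Y=G_k(X)$, the restriction $r:W\otimes\cO_Y\to\Sym^2\cU_{k+2}^\vee$ has image contained in the tautological line $\cO_Y(-1)$. This holds because each quadric of the pencil contains $\Sigma$, so its restriction to $\Span{\Sigma}$ is a multiple of the equation of $\Sigma$. This containment is a flat-family statement: the restriction to $\Span{\Sigma}$ lies in the degree-2 part of the ideal of $\Sigma$ in $\Span{\Sigma}$, which is the line spanned by its equation. Off $Z$ the induced map $W\to\cO_Y(-1)$ is nonzero and hence surjective, so its kernel is a line subbundle of $W$. This gives $U\to\bbP(W)$, and together with $U\to G$ we get a point of $\Gr(k+2,V)\times\bbP(W)$ at which the kernel quadric is isotropic on $U_{k+2}$. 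That point lies in $U'$.

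The two compositions are the identity by the same tautological identifications used in the cubic case: the kernel line of $W\to\cO_Y(-1)$ is exactly the isotropic quadric, and the quotient map recovers the quadric equation. I expect the main subtlety to be this functorial bookkeeping. One has to check that the constructions commute with base change over arbitrary test schemes, so that we get an isomorphism of schemes and not merely a bijection. The key input is the flatness argument in Step 3 showing that $W\to\Sym^2\cU_{k+2}^\vee$ lands in $\cO_Y(-1)$ scheme-theoretically.
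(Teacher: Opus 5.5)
Your proposal is correct and follows essentially the same route as the paper. You factor the restriction map as $W \to \cO_{Y'}(1) \xrightarrow{\textbf{s}} \Sym^2\cU_{k+2}^\vee$ through a surjection to get $V(\textbf{s}) = Z'$, build the inverse over $U$ from the kernel of $W \to \cO_Y(-1)$, check the two compositions tautologically, and obtain the codimension by a dimension count. Your extra justification that $W \to \Sym^2\cU_{k+2}^\vee$ lands in $\cO_Y(-1)$ scheme-theoretically is a welcome detail that the paper leaves implicit.
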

	\begin{proof}
		The locus $U' \subset Y'$ is the complement of $Z' = Y' \times_G F$, so in particular the defining equations \[W \to \Sym^2 \cU_{k+2}^\vee\]
		for $F_{k+1}(X) \subset G = \Gr(k+2,V)$ is fiberwise nonvanishing. Hence the map $\textbf{s} : \cO_{Y'}(1) \to \Sym^2 \cU_{k+2}^\vee$, which factors this map, is also fiberwise nonvanishing, and so must be injective with locally free cokernel along $U'$. Therefore we may define a map $U' \to \bbP(\Sym^2 \cU_{k+2}^\vee)$ induced by $\textbf{s}$; but by construction, since the associated family of quadrics lies within $X$, the map must land in $U \subset G_k(X)$. \par 
		We next construct the inverse map $U \to \Gr(k+2,V) \times \bbP(W)$. On $Y = G_k(X)$, we may again consider the restriction map 
		\[W \to \Sym^2 \cU_{k+2}^\vee.\]
		Since the universal quadric over $G_k(X)$ lies inside of every quadric in the pencil, it follows that the map factors as 
		\[W \to \cO_{Y}(-1) \hookrightarrow \Sym^2 \cU_{k+2}^\vee,\]
		where the second map is the tautological subbundle coming from the inclusion $G_k(X) \hookrightarrow \bbP(\Sym^2 \cU_{k+2}^\vee)$. Over $U$, the map $W \to \cO_Y(-1)$ restricts to a surjective map, as fiberwise for points in $U$, the composition $W \to \cO_Y(-1) \hookrightarrow \Sym^2 \cU_{k+2}^\vee$ does not vanish. In particular one can observe that $\ker(W \to \cO_Y(-1)|_U)$ is necessarily a line bundle and must be identified with $\cO_Y(1)|_U$, so we get a map
		\begin{equation}
			\cO_{Y}(1)|_U = \ker(W \to \cO_Y(-1)|_U) \hookrightarrow W \label{quarticsectiondef2}
		\end{equation}
		which is an injection with cokernel $\cO_Y(1)|_U$, and take this to be our definition of $\textbf{s}' : \cO_{Y}(1)|_U \to W$. This then defines a map $U \to \Gr(k+2,V) \times \bbP(W)$. The condition that the image lies in $\OGr(k+2,\fQ)$ follows since the composition $\cO_Y(1)|_U \xrightarrow{\textbf{s}'} W \to \Sym^2 \cU_{k+2}^\vee|_U$ vanishes by construction.  \par 
		To check that the compositions are the identity is a straightforward exercise, analogous to the case of cubics. For example, to check that the composition $U \to U' \to U$ is the identity, it suffices to check that the pullback of $\textbf{s} : \cO_{Y'}(1) \to \Sym^2 \cU_{k+2}^\vee$ along the map $U \to U'$ induced by $\textbf{s}'$ is the restriction to $U$ of the tautological section $\cO_Y(-1) \hookrightarrow \Sym^2 \cU_{k+2}^\vee$. But over $U$, both maps are identified with the inclusion of the image $\im(W \to \Sym^2 \cU_{k+2}^\vee) \hookrightarrow \Sym^2 \cU_{k+2}^\vee$, so they must coincide. The other composition is similar. \par 
		To verify the equality $V(\textbf{s}) = Z'$: $Z'$ is cut out by the pullback to $Y'$ of the defining map $W \to \Sym^2 \cU_{k+2}^\vee$ of $F$, and by the factorization of \eqref{quarticsectiondef1}, that map factors as 
		\[ W \to \cO_{Y'}(1) \xrightarrow{\textbf{s}} \Sym^2 \cU_{k+2}^\vee\]
		where the first map is surjective. Evidently as a surjective map cannot vanish at any point of $Y'$, it follows that the vanishing of the composition is equivalent to the vanishing of $\textbf{s}$, whence the claim. \par 
		The final claim on the codimension of $Z' =F_{k+1}(X)\times \bbP(W)$ comes from the following identities: 
		\begin{equation}
			\begin{aligned}
				\dim Z' &= (k+2)(n-k+1) - 2{k+3 \choose 2} + 1 \\ 
				&= (k+2)(n-k+1) - {k+3 \choose 2} + 1 - {k + 3 \choose 2} \\
				&= \dim Y' - \on{rank} \Sym^2 \cU_{k+2}^\vee. 
			\end{aligned}
		\end{equation}
	\end{proof}
	\begin{proof}[Proof of Theorem \ref{geomflipquartic}]
		Combine Lemmas \ref{flippinglemma}, \ref{smoothnesslemmaquartic} and \ref{sectionlemmaquartics}. 
	\end{proof}
	\subsection{The case of linear sections of $\Gr(2,V_5)$}
	As in Theorem \ref{geomflipthm}(c), we let $X = \Gr(2,V_5) \cap \bbP(W) \subset \bbP(\bigwedge^2 V_5)$ be a smooth transverse linear section of $\Gr(2,V_5)$ embedded via the Pl\"ucker embedding. Let $n := \dim X$. \par 
	\begin{theorem}
		\label{geomflipquintic}
		 Let $V_5$ be a 5-dimensional vector space, and suppose $X = \Gr(2,V_5) \cap \bbP(W) \subset \bbP(\bigwedge^2 V_5)$ is a smooth transverse linear section for some $\bbP(W) \subset \bbP(\bigwedge^2 V_5)$, and suppose $\dim X = n$. Let $0 \leq k \leq n.$ Let $\cU_{k+2}$ denote the tautological subbundle of rank $k+2$ on $\Gr(k+2,V)$, and $\cR_4$ denote on $\Gr(4,V_5)$. Then $\cW \coloneqq\bigwedge^2 \cR_4 \cap W$ is a vector bundle of rank $n$ on $\Gr(4,V_5) \simeq \bbP(V_5^\vee)$. If $k = 0$ or $1$, we have a standard flip diagram 
		\begin{equation}
			\begin{tikzcd}[sep=small]
				\bbP(\Sym^2 \cU_{k+2}^\vee|_{F_{k+1}(X)}) \arrow[d] \arrow[r, hook]& G_k(X) \arrow[leftrightarrow, r, dashed, "\text{flip}"] & \Gr_{\Gr(4,V_5)}(k+2,\bigwedge^2\cR_4\cap W) \arrow[d] & \arrow[l, hook'] F_{k+1}(X) \times \Gr(4,V_5) \arrow[d]\\
				F_{k+1}(X) & &\Gr(4,V_5) & F_{k+1}(X)
			\end{tikzcd}
		\end{equation}
		In particular, $G_k(X)$ is smooth and irreducible of the expected dimension. \par 
		If $k = 2$, then there is instead a decomposition 
		\begin{equation}
			G_2(X) \simeq \Gr_{\Gr(4,V_5)}(4,\bigwedge^2 \cR_4 \cap W) \sqcup \bbP(\Sym^2 \cU_{4}^\vee),
		\end{equation}
		so that $G_2(X)$ is the disjoint union of two smooth irreducible subvarieties. For $k > 2$, there is an isomorphism 
		\begin{equation}
			G_k(X) \simeq \Gr_{\Gr(4,V_5)}(k+2,\bigwedge^2 \cR_4 \cap W).
		\end{equation}
	\end{theorem}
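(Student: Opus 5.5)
We will follow the pattern of the cubic and quartic cases and place ourselves in Situation \ref{flippingsituation}. Take $V = W$, $G = \Gr(k+2,W)$ with $\cE = \Sym^2\cU_{k+2}^\vee$, $Y = G_k(X)$, $F = F_{k+1}(X)$, and $Y' = \Gr_{\Gr(4,V_5)}(k+2,\cW)$. The last embeds in $\Gr(k+2,W)\times\bbP(V_5^\vee)$, and we take $\cE' = V_5$ (the trivial bundle pulled back from $\bbP(V_5^\vee)$); this is the analogue of $W\otimes\cO$ in the quartic case.

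The first step is to check that $\cW$ is a vector bundle of rank $n$. The space $\bigwedge^2 R_4$ is $6$-dimensional and $W$ has codimension $6-n$ in $\bigwedge^2V_5$ (recall $\dim W = n+4$). So it suffices to show that $\bigwedge^2 R_4 + W = \bigwedge^2 V_5$ for every $R_4$. If this failed, some nonzero $\omega \in W^\perp \subset \bigwedge^2V_5^\vee$ would vanish on $\bigwedge^2 R_4$, which forces $\omega$ to have rank $2$. Smoothness and transversality of $X$ mean that $W^\perp$ contains no rank $2$ forms (a rank $2$ form gives a singular point of the linear section), so this cannot happen.

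The key geometric input is the classical fact that a $(k+1)$-plane $\bbP(U)\subset\bbP(W)$ with $U \subset \bigwedge^2 R_4$ meets $\Gr(2,R_4)$, a smooth quadric in $\bbP^5$, in a $k$-dimensional quadric, unless $\bbP(U)$ lies in $\Gr(2,R_4)$. Conversely, every $k$-dimensional quadric $\Sigma\subset X$ whose span is not contained in $X$ has its span inside some $\bbP(\bigwedge^2R_4)$. For $k=0$ this is the statement that a secant line of $\Gr(2,V_5)$ lies in a unique $\Gr(2,R_4)$, namely with $R_4 = \langle$the two $2$-planes$\rangle$, and the degenerate cases are handled similarly. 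For $k\ge1$ it follows from the classification of quadrics and linear spaces on $\Gr(2,5)$ in \cite[Lemma 3.2]{GMquadrics}.

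Next we construct $\textbf{s}$. On $Y'$ the Plücker quadrics on $\bigwedge^2V_5$ that vanish on $\Gr(2,V_5)$ are parametrized by $V_5$, via $v\mapsto (\omega\mapsto v\wedge\omega\wedge\omega)$. Restricted to $\bigwedge^2R_4$, the only surviving quadric is the one indexed by the class of $V_5/R_4$, which is the Plücker quadric of $\Gr(2,R_4)$ and corresponds to $\cO_{\bbP(V_5^\vee)}(1)$-type data. This gives a factorization $V_5 \to \cO_{Y'}(1)\xrightarrow{\textbf{s}}\Sym^2\cU_{k+2}^\vee$, exactly as in \eqref{quarticsectiondef1}, because the quadrics indexed by $R_4$ vanish identically on $\bigwedge^2R_4$. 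Then $V(\textbf{s})$ is the locus where $\bbP(U)\subset\Gr(2,R_4)\cap\bbP(W)$, which is precisely $Z'$. For $k\le 1$ every $(k+1)$-plane in $X$ lies in a unique $\Gr(2,R_4)$ for all $R_4$ in some positive-dimensional family, so the statement's description $Z' \simeq F_{k+1}(X)\times\Gr(4,V_5)$ must be checked; more precisely we should identify $Z'\to F$ as a projective subbundle, using the classification of planes in $\Gr(2,5)$ (the $\sigma$ and $\rho$ types).

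The inverse map $U\to Y'$ sends $\Sigma$ to the unique $R_4$ with $\langle\Sigma\rangle\subset\bigwedge^2R_4$. We plan to build it functorially, as the kernel of the map $V_5\to\cO_Y(-1)$ given by restricting Plücker quadrics to the universal span, in the manner of \eqref{quarticsectiondef2}. Smoothness of $G_k(X)$ along $Z$ should follow from the same deformation computation as in Lemma \ref{smoothnesslemmacubics}, now using $\cN_{\bbP^{k+1}/X}$ and its $H^1$-vanishing, which holds because the $F_{k+1}(X)$ are smooth by \cite{GMquadrics}.

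The cases $k\le1$ then follow from Lemma \ref{flippinglemma}(2) after checking that the codimension of $Z'$ equals $\binom{k+3}{2}$. For $k=2$ we have $F_3(X)\neq\varnothing$ only if $X$ contains a $\bbP^3$, which happens when $n = 6$. In that case the relevant $Z'$ is empty because $\bbP^3\subset\Gr(2,R_4)$ is impossible, so Lemma \ref{flippinglemma}(1) gives the disjoint union. For $k>2$ we have $F=\varnothing$ and the statement is again Lemma \ref{flippinglemma}(1). We expect the main obstacle to be the moduli-theoretic inverse $U\to Y'$, namely showing that the span of every quadric not contained in a linear subspace of $X$ determines $R_4$ flatly in families, together with the precise identification and codimension count for $Z'$.
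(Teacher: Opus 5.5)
Your architecture matches the paper's: the same data for Situation~\ref{flippingsituation}, the same $\textbf{s}$ (the Pl\"ucker quadric of $\Gr(2,R_4)$), the same $\textbf{s}'$ (the kernel of $V_5\to\cO_Y(-1)$), and Lemma~\ref{flippinglemma} at the end. The trouble is that the steps carrying the content are either deferred or mis-justified.

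The first is smoothness along $Z$, which does not follow from smoothness of $F_{k+1}(X)$. Smoothness only gives $h^0(\cN_{P/X})=\dim_{[P]}F_{k+1}(X)$ and says nothing about the obstruction space $H^1(\cN_{P/X})$. From $0\to\cN_{P/X}\to\cO_P(1)^{\oplus(n-k+2)}\to\cN_{X/\bbP(W)}|_P\to0$ you need two further facts:
\begin{itemize}
\item the numerical identity $\dim F_{k+1}(X)+h^0(\cN_{X/\bbP(W)}|_P)=(n-k+2)(k+2)$, i.e.\ the expected dimension;
\item $H^1(\cN_{X/\bbP(W)}(-2)|_P)=0$, to pass from $P$ to $\Sigma$.
\end{itemize}
Unlike the cubic case, $\cN_{X/\bbP(W)}\simeq\cQ^\vee(2)$ has rank $3$, and its restriction to $P$ depends on whether $P$ is a $\sigma$- or a $\tau$-space. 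The paper computes both restrictions (Lemma~\ref{normalrestriction}) and checks the identity separately on $F^\sigma_{k+1}(X)$ and $F^\tau_{k+1}(X)$ against Proposition~\ref{fanodims}. That computation is the real input, and ``the same computation as for cubics'' does not supply it.

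For the inverse, the flatness you worry about is automatic: on $U$ the map $V_5\to\cO_Y(-1)$ is surjective, so its kernel $\cR_4$ is a rank-$4$ subbundle. What must be proved is $\cU_{k+2}\subset\bigwedge^2\cR_4$, i.e.\ that the morphism lands in $Y'$. The missing idea is the classical identity $\bigcap_{v\in R_4}Q_v=\Gr(2,V_5)\cup\bbP(\bigwedge^2R_4)$. Apply it to $\langle\Sigma\rangle$: every $Q_v$ with $v\in R_4$ vanishes there, and $\langle\Sigma\rangle\not\subset\Gr(2,V_5)$. The classification of linear spaces in \cite[Lemma 3.2]{GMquadrics} does not give this.

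You also leave the structure of $Z'\to F$ open, and your description of it is wrong for $k=1$:
\begin{itemize}
\item a $\sigma$-space $\bbP(V_1\wedge V_{k+3})$ lies in $\bbP(\bigwedge^2R_4)$ iff $V_{k+3}\subset R_4$, giving a $\bbP^{1-k}$ of choices, hence a single $R_4$ when $k=1$;
\item a $\tau$-plane $\bbP(\bigwedge^2V_3)$ lies in $\bbP(\bigwedge^2R_4)$ iff $V_3\subset R_4$, giving a $\bbP^1$.
\end{itemize}
So $Z'$ is a union of projective bundles of different relative dimensions. It is equidimensional only because $\dim F^\sigma_2(X)=\dim F^\tau_2(X)+1$, and $\dim Z'=\dim F^\sigma_{k+1}(X)+1-k=\dim Y'-\binom{k+3}{2}$ (Lemmas~\ref{quinticprojbundlelemma} and~\ref{sectionlemmaquintics}). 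You are right that the label $F_{k+1}(X)\times\Gr(4,V_5)$ cannot be taken literally.

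Two smaller points.
\begin{itemize}
\item $F_3(X)$ is also nonempty (a point) for $n=5$. This is harmless, since your $k=2$ argument does not depend on $n$.
\item Your rank-$2$-form argument for $\cW$ needs $n\ge3$, so that $\Gr(2,\ker\omega)\simeq\bbP^2$ meets $\bbP(W)$. For $n=2$, $\bbP(W^\perp)\simeq\bbP^3$ always meets $\Gr(2,V_5^\vee)$, and $\cW$ genuinely jumps rank along the lines of $\bbP(V_5^\vee)$ giving the conic pencils of the quintic del Pezzo surface. So that part of the statement itself requires $n\ge3$.
\end{itemize}
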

	Before proving the theorem, we establish some background on the geometry of $\Gr(2,V_5)$ and its related varieties, which is somewhat more involved than the previous two cases. 
	\subsubsection{Linear spaces on linear sections of $\Gr(2,V_5)$}
	Let us first recall some features of the geometry of the Fano schemes of linear spaces on $X$ \cite[Section 3.1]{GMquadrics}. In particular, we recall the two types of linear spaces on $\Gr(2,V_5)$: 
	\begin{defn}
		Suppose $P \subset \Gr(2,V_5) \subset \bbP(\bigwedge^2 V_5)$ is a linear subspace. \begin{itemize}
			\item $P$ is a \textbf{$\sigma$-space} if $P \subset \bbP(V_1 \wedge V_5) \subset \Gr(2,V_5)$ for some 1-dimensional subspace $V_1 \subset V_5$. 
			\item $P$ is a \textbf{$\tau$-space} if $P \subset \bbP(\bigwedge^2 V_3) \subset \Gr(2,V_5)$ for some 3-dimensional subspace $V_3 \subset V_5$. 
		\end{itemize}
		
	\end{defn}
	Let $F_k^\sigma(X)$ (resp. $F_k^\tau(X)$) denote the closed subscheme of the Fano variety $F_k(X)$ parametrizing $\sigma$-planes (resp. $\tau$-planes). 
	\begin{proposition}\label{fanodims}
		For $k \geq 3$, $F_{k+1}(X)$ is empty. For $0 \leq k \leq 2$, the $F_{k+1}(X)$ are smooth whenever nonempty, and have dimension given by the following table: 
		\begin{center}
		\begin{tabular}{c|c|c|c|c|c}
			$\dim X$ & $2$ & $3$ & $4$ & $5$ & $6$ \\\hline
			$\dim F_1(X)$& $0$ & $2$ & $4$ & $6$ & $8$ \\
			$\dim F_2^\sigma(X)$& $\varnothing$ &$\varnothing$ & $1$ & $4$ & $7$ \\
			$\dim F_2^\tau(X)$& $\varnothing$  & $\varnothing$& $0$ & $3$ & $6$ \\
			$\dim F_3(X)$& $\varnothing$  &$\varnothing $&$\varnothing$ & $0$ & $4$
		\end{tabular}
		\end{center}
		Moreover, $F_1(X) = F_1^\sigma(X) = F_1^\tau(X)$, $F_2(X) = F_2^\sigma(X) \sqcup F_2^\tau(X)$, and $F_3(X) = F_3^\sigma(X)$ with $F_3^\tau(X) = \varnothing$. 
	\end{proposition}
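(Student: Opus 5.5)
The plan is to reduce everything to the well-known geometry of linear spaces on $\Gr(2,V_5)$ itself, and then cut down by the $c := 6 - n$ linear equations defining $\bbP(W)$. Throughout, write $c = \on{codim}(\bbP(W) \subset \bbP(\bigwedge^2 V_5))$, so that $n = 6 - c$.

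\textbf{Classification and emptiness.} First I would recall the classical fact that every linear subspace of $\Gr(2,V_5)$ is either a $\sigma$-space or a $\tau$-space. The maximal ones are:
\begin{itemize}
\item the $\sigma$-spaces $\bbP(V_1 \wedge V_5) \simeq \bbP^3$, parametrized by $\bbP(V_5)$;
\item the $\tau$-planes $\bbP(\bigwedge^2 V_3) \simeq \bbP^2$, parametrized by $\Gr(3,V_5)$.
\end{itemize}
From this list, $F_{k+1}(X) \subset F_{k+1}(\Gr(2,V_5)) = \varnothing$ for $k+1 \geq 4$, and $F_3^\tau(X) = \varnothing$. A line $\bbP(V_1 \wedge V_2)$ lies in both $\bbP(V_1 \wedge V_5)$ and $\bbP(\bigwedge^2 V_3)$ for any $V_3 \supset V_2$, which gives $F_1 = F_1^\sigma = F_1^\tau$. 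On the other hand, $\bbP(V_1 \wedge V_5) \cap \bbP(\bigwedge^2 V_3)$ is either empty or the line $\bbP(V_1 \wedge V_3)$, so no plane is both a $\sigma$-plane and a $\tau$-plane. Both loci are closed, so $F_2 = F_2^\sigma \sqcup F_2^\tau$.

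\textbf{Dimensions.} Next I would realize each $F^\sigma_{k+1}(X)$ and $F^\tau_{k+1}(X)$ as the zero locus of a section of a vector bundle on a smooth homogeneous base:
\begin{itemize}
\item $F^\sigma_{k+1}(\Gr(2,V_5))$ is the relative Grassmannian $\Gr_{\bbP(V_5)}(k+2, V_5/V_1)$, of dimension $4 + (k+2)(2-k)$;
\item $F_2^\tau(\Gr(2,V_5)) = \Gr(3,V_5)$, of dimension $6$.
\end{itemize}
The condition $P \subset \bbP(W)$ is the vanishing of the induced section of $(\bigwedge^2 V_5/W) \otimes \cU_{k+2}^\vee$, a bundle of rank $c(k+2)$. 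The expected dimension is therefore base dimension minus $c(k+2)$:
\begin{itemize}
\item $\dim F_1 = 8 - 2c = 2n - 4$;
\item $\dim F_2^\sigma = 7 - 3c = 3n - 11$;
\item $\dim F_2^\tau = 6 - 3c = 3n - 12$;
\item $\dim F_3 = 4 - 4c = 4n - 20$.
\end{itemize}
These match every entry of the table, with negative values giving $\varnothing$. Since a zero locus has dimension at least the expected one, it remains to show that each nonempty locus is smooth of exactly this dimension.

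\textbf{Smoothness (the main obstacle).} Transversality of $\bbP(W)$ to $\Gr(2,V_5)$ gives smoothness of $X$, but not directly regularity of the induced sections on the Fano schemes. I would argue by deformation theory, as in Lemma \ref{smoothnesslemmacubics}. For $P \in F_{k+1}(X)$ it suffices to show $H^1(\cN_{P/X}) = 0$ together with $h^0(\cN_{P/X})$ equal to the expected dimension. I would use the sequence
\[0 \to \cN_{P/X} \to \cN_{P/\Gr(2,V_5)} \to \cO_P(1)^{\oplus c} \to 0,\]
together with the explicit normal bundles of $\sigma$- and $\tau$-spaces in $\Gr(2,V_5)$, computed from $T_{\Gr} = \cU^\vee \otimes V_5/\cU$ restricted to $P$. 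For instance, on a $\sigma$-space $\bbP(V_1 \wedge V_5)$ the restriction is an extension of twists of $\cO_P$ and $\Omega_P(1)$-type bundles, all with $H^1 = 0$. The step I expect to be hardest is showing that the map on global sections $H^0(\cN_{P/\Gr}) \to H^0(\cO_P(1))^{\oplus c}$ is surjective for every $P$ lying on a \emph{smooth} $X$, not just a generic one. If it failed, there would be a hyperplane in $\bbP(W)$ tangent to $\Gr(2,V_5)$ along a point of $P$, contradicting the smoothness of $X$. I would first try to make this precise pointwise using the $\on{GL}(V_5)$-orbit description of the relevant restricted tangent spaces. Failing that, I would fall back on the explicit case analysis in \cite[Lemma 3.2 and Section 3.1]{GMquadrics}, which treats exactly these Fano schemes for smooth linear sections.
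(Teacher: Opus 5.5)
Your classification of linear spaces, the decomposition $F_2(X)=F_2^\sigma(X)\sqcup F_2^\tau(X)$, and the expected-dimension count are correct and reproduce the table. There is one slip: a line is $\bbP(V_1\wedge V_3)$ with $V_1\subset V_3$. This is more explicit than the paper, which simply cites \cite[Lemma 3.1]{GMquadrics} and the finiteness of lines on the quintic del Pezzo surface. The gap is in the smoothness step, which carries all the real content. That includes the emptiness claims, since a negative expected dimension does not by itself make a zero locus empty. Write $P=\bbP(L)$ and $W^\perp\subset\bigwedge^2V_5^\vee$ for the annihilator of $W$. The cokernel of $H^0(\cN_{P/\Gr(2,V_5)})\to H^0(\cO_P(1))^{\oplus c}=\Hom(L,\bigwedge^2V_5/W)$ is dual to the space of tensors $T\in W^\perp\otimes L$ that annihilate the image. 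If $T=\omega\otimes l$ is a pure tensor, your argument works. Since $\cN_{P/\Gr(2,V_5)}$ is globally generated, $\omega$ then vanishes on the embedded tangent space of $\Gr(2,V_5)$ at $[l]$, so $X$ is singular at $[l]\in P$. But for $c\ge 2$ nothing in your proposal forces $T$ to be pure. The inference ``non-surjective $\Rightarrow$ some hyperplane containing $\bbP(W)$ is tangent to $\Gr(2,V_5)$ at a point of $P$'' is then unjustified. As written, the argument only proves the statement for $c\le 1$, i.e.\ $n\ge 5$.

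The gap can be closed without the citation.
For lines, Proposition \ref{splittingprop1} gives $H^1(\cN_{\ell/X})=0$ for every smooth $X$.
For the emptiness of $F_2(X)$ when $n\le 3$ and of $F_3(X)$ when $n\le 4$: a linear $\bbP^{n-1}\subset X$ would be a divisor of $H$-degree $1$. This is impossible because $\Pic X=\bbZ H$ for $n\ge 3$ by Lefschetz and $H^n=5$. The remaining cases ($\dim P\ge n$) are excluded since $X$ is not a linear space.
What remains is planes for $n=4$, $c=2$. Since the components of $F_2(\Gr(2,V_5))$ are $\on{GL}(V_5)$-orbits, the image of $H^0(\cN_{P/\Gr(2,V_5)})$ in $\Hom(L,\bigwedge^2V_5/W)$ is $\{l\mapsto \xi\cdot l \bmod W : \xi\in\on{End}(V_5)\}$. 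So $T=\sum_i\omega_i\otimes u_i\wedge v_i$ annihilates it if and only if $\sum_i(\iota_{u_i}\omega_i\otimes v_i-\iota_{v_i}\omega_i\otimes u_i)=0$ in $V_5^\vee\otimes V_5$. A rank-one $T$ is the tangency case above. A rank-two $T$ can be written $\omega_1\otimes e\wedge a_1+\omega_2\otimes e\wedge a_2$, with $\omega_1,\omega_2$ a basis of $W^\perp$ and $e,a_1,a_2$ independent. For a $\sigma$-plane take $e=v_1$; for a $\tau$-plane $\bbP(\bigwedge^2V_3)$, use that two $2$-planes in $V_3$ meet in a line. The $a_i$-components of the identity give $\iota_e\omega_1=\iota_e\omega_2=0$. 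Hence $\bbP(e\wedge V_5)\simeq\bbP^3\subset X$, contradicting the degree argument.
With these additions your route is a complete, self-contained proof. Without them you are, like the paper, relying on \cite{GMquadrics} for the substance.
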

	\begin{proof}
		Besides the dimensions in the table for $\dim X = 2$, everything follows from Lemma 3.1 of \cite{GMquadrics} and the subsequent statements in that paper. For $\dim X = 2$, $X$ is a quintic del Pezzo surface, and it is classical that $X$ contains finitely many lines. 
	\end{proof}
	We will also find it useful to identify the restrictions of the normal bundle $\cN_{X/\bbP(W)}$ to $\sigma$-planes and $\tau$-planes. 
	\begin{lemma} \label{normalrestriction}
		Let $\bbP(L) = P \subset X$ be a linear subspace of dimension $k + 1$. The normal bundle $\cN_{X/\bbP(W)}$ is given by $\cQ^\vee(2)$ where $\cQ$ is the restriction of the tautological quotient bundle of $\Gr(2,V_5)$ to $X$. \begin{itemize}
			\item If $P$ is a $\sigma$-plane, then $\cN_{X/\bbP(W)}|_P \simeq (\cQ_P^\vee \oplus \cO_P^{\oplus (2-k)})\otimes\cO_P(2)$ where $\cQ_P$ is the tautological quotient bundle $L/\cO_P(-1)$. 
			\item If $P$ is a $\tau$-plane, then $\cN_{X/\bbP(W)}|_P \simeq \cO_P(1)\oplus \cO_P(2)^{\oplus 2}.$ 
		\end{itemize}
	\end{lemma}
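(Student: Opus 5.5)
We first identify $\cN_{X/\bbP(W)}$ globally, then restrict it to $P$ using the explicit description of $\sigma$- and $\tau$-spaces.

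\textbf{Global identification.} Since $X$ is a transverse linear section of $\Gr(2,V_5)$, we have $\cN_{X/\bbP(W)} \simeq \cN_{\Gr(2,V_5)/\bbP(\bigwedge^2 V_5)}|_X$. The Euler sequences give
\[
T_{\Gr(2,V_5)} = \cU^\vee\otimes\cQ,
\qquad
T_{\bbP(\bigwedge^2 V_5)}|_{\Gr} = \bigl(\bigwedge^2 V_5/\bigwedge^2\cU\bigr)\otimes\cO(1).
\]
The quotient $\bigwedge^2 V_5/\bigwedge^2 \cU$ has a two-step filtration with graded pieces $\cU\otimes\cQ$ and $\bigwedge^2\cQ$. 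Using $\cU^\vee\simeq\cU\otimes\cO(1)$, which holds because $\cU$ has rank 2, the piece $(\cU\otimes\cQ)(1)$ is identified with $T_{\Gr}$. Hence the normal bundle is
\[
\cN_{\Gr/\bbP} \simeq \bigwedge^2\cQ\otimes\cO(1).
\]
Since $\cQ$ has rank 3 and $\det\cQ=\cO(1)$, we get $\bigwedge^2\cQ\simeq\cQ^\vee(1)$, so $\cN_{\Gr/\bbP}\simeq\cQ^\vee(2)$. Restricting to $X$ gives the first claim. The whole computation reduces to checking that the filtration step matches the tangent map, which is standard.

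\textbf{$\sigma$-planes.} Suppose $P=\bbP(L)\subset\bbP(V_1\wedge V_5)$. Identify $\bbP(V_1\wedge V_5)\simeq\bbP(V_5/V_1)$, so that $L$ corresponds to a $(k+2)$-dimensional subspace $L\subset V_5/V_1$. At a point $[V_1\wedge v]\in P$:
\begin{itemize}
\item the tautological subbundle is $\cU=V_1\oplus\Span{v}$, so $\cU|_P$ is an extension of $\cO_P(-1)$ by the trivial line $V_1$;
\item the quotient is $\cQ|_P=(V_5/V_1)/\Span{v}$, which is an extension of $(V_5/V_1)/L\simeq\cO_P^{\oplus(2-k)}$ by $L/\cO_P(-1)=\cQ_P$.
\end{itemize}
The extension $0\to\cQ_P\to\cQ|_P\to\cO^{\oplus(2-k)}\to0$ splits, since
\[
\on{Ext}^1(\cO,\cQ_P)=H^1(\cQ_P)=0,
\]
which follows from the Euler sequence on $P$. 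Therefore $\cQ|_P\simeq\cQ_P\oplus\cO_P^{\oplus(2-k)}$. Dualizing and twisting by $\cO_P(2)$ gives the stated formula.

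\textbf{$\tau$-planes.} Suppose $P\subset\bbP(\bigwedge^2V_3)$. By Proposition \ref{fanodims}, $P$ is a plane, so $k=1$. Then $P\simeq\Gr(2,V_3)\simeq\bbP(V_3^\vee)$ and $\cU|_P\subset V_3\otimes\cO$. The bundle $\cQ|_P$ is an extension
\[
0\to V_3/\cU\to\cQ|_P\to V_5/V_3\otimes\cO\to0,
\]
with $V_3/\cU\simeq\cO_P(1)$. This extension splits because $H^1(\cO_P(1))=0$. Hence $\cQ|_P\simeq\cO_P(1)\oplus\cO_P^{\oplus 2}$, and therefore
\[
\cQ^\vee(2)|_P\simeq\cO_P(1)\oplus\cO_P(2)^{\oplus2}.
\]

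\textbf{Main obstacle.} The delicate step is the global identification $\cN\simeq\bigwedge^2\cQ(1)\simeq\cQ^\vee(2)$, specifically making the filtration of $\bigwedge^2 V_5/\bigwedge^2\cU$ correspond correctly to $T_{\Gr}$. Alternatively, one can cite the standard fact that the Plücker normal bundle is $\bigwedge^2\cQ\otimes\cO(1)$. The restrictions to $\sigma$- and $\tau$-planes are then routine.
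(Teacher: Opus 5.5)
Your proposal is correct and follows essentially the same route as the paper. In both, you compare the tautological sequence of $\Gr(2,V_5)$ restricted to $P$ with the tautological sequence of $P$ (respectively of $\Gr(2,V_3)$), and then split the resulting extension.

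The differences are cosmetic:
\begin{itemize}
\item You derive $\cN_{\Gr/\bbP}\simeq\bigwedge^2\cQ(1)\simeq\cQ^\vee(2)$ from the tangent sequences, whereas the paper simply cites \cite[Lemma A.2]{bcp}.
\item You use the canonical identification $V_1\wedge V_5\simeq V_5/V_1$ instead of choosing a splitting $V_5=V_1\oplus V_4$.
\item You justify the splittings by $H^1(\cQ_P)=0$ and $H^1(\cO_P(1))=0$, which the paper leaves implicit.
\end{itemize}

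One correction is needed in the $\tau$-case. Proposition \ref{fanodims} does not force a $\tau$-space to be a plane: every line on $\Gr(2,V_5)$ is a $\tau$-line, since $F_1(X)=F_1^\tau(X)$. The paper also uses the $\tau$-formula with $k=0$ in Lemma \ref{smoothnesslemmaquintics}, so the line case must be covered.

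The fix is immediate. Your extension $0\to V_3/\cU\to\cQ|_P\to(V_5/V_3)\otimes\cO\to0$ works verbatim for any linear $P\subset\bbP(\bigwedge^2V_3)$. Restrict $V_3/\cU\simeq\cO(1)$ from $\Gr(2,V_3)$ and use $H^1(\cO_P(1))=0$, which holds on a line as well. The identification $P\simeq\Gr(2,V_3)$ is never needed. For a line the result also agrees with your $\sigma$-formula at $k=0$, because there $\cQ_P\simeq\cO_P(1)$.
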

	\begin{proof}
		The isomorphism $\cN_{\Gr(2,V_5)/\bbP(\bigwedge^2 V_5)} \simeq \cQ^\vee(2)$ is proven in \cite[Lemma A.2]{bcp}. \par 
		Suppose first that $P = \bbP(L)$ is a $\sigma$-plane, so that there exists $V_1$ such that $\bbP(L) \subset \bbP(V_1 \wedge V_5) \subset \Gr(2,V_5)$. Choosing a complementary subspace $V_4$ to $V_1 \subset V_5$, we may fix a splitting $V_5 = V_1 \oplus V_4$. Fixing any nonzero $v_1 \in V_1$, we then have a natural identification 
		\begin{equation}
			\begin{aligned}
				V_4 &\rightarrow V_1 \wedge V_5 \\
				v &\mapsto v_1 \wedge v
			\end{aligned}
		\end{equation}
		The associated map $\bbP(V_4) \to \Gr(2,V_5)$ sends $\on{span}(v) \mapsto \on{span}(v_1, v)$. There is then a natural map of short exact sequences:  
		\begin{equation}
			\begin{tikzcd}
				0 \arrow[r] & \cO_{\bbP(V_4)}(-1) \arrow[r] \arrow[d, hook]  & V_4  \arrow[r] \arrow[d, hook] & \cQ_{\bbP(V_4)} \arrow[r] \arrow[d] & 0 \\
				0 \arrow[r] & \cU|_{\bbP(V_4)} \arrow[r] & V_5\arrow[r] &\cQ|_{\bbP(V_4)} \arrow[r] & 0
			\end{tikzcd}
		\end{equation}
		where $\cU$ and $\cQ$ denote the tautological subbundle and quotient bundle of $\Gr(2,V_5)$. It is an easy consequence of the snake lemma that $\cQ|_{\bbP(V_4)} \simeq \cQ_{\bbP(V_4)}$. On the other hand, a similar argument via the snake lemma shows that $\cQ_{\bbP(V_4)}|_{\bbP(L)} \simeq \cQ_{\bbP(L)} \oplus \cO^{\oplus 3 - (k+1)}$. The claim on the normal bundle then follows. \par 
		Suppose now that $P = \bbP(L)$ is a $\tau$-plane, so that there exists $V_3$ such that $\bbP(L) \subset \bbP(\bigwedge^2 V_3) \subset \Gr(2,V_5)$. Note that under the Pl\"ucker map, $\Gr(2,V_3) \simeq \bbP(\bigwedge^2 V_3)$. Then in particular, we have a map of short exact sequences given by 
		\begin{equation}
			\begin{tikzcd}
				0 \arrow[r] & \cU_{\Gr(2,V_3)}|_{\bbP(L)} \arrow[r] \arrow[d, "\sim" {anchor = south, rotate = 90}]  & V_3  \arrow[r] \arrow[d, hook] & \cO_{\bbP(L)}(1) \arrow[r] \arrow[d] & 0 \\
				0 \arrow[r] & \cU|_{\bbP(L)} \arrow[r] & V_5\arrow[r] &\cQ|_{\bbP(L)} \arrow[r] & 0
			\end{tikzcd}
		\end{equation}
		which yields by the snake lemma a split short exact sequence in the right column giving an isomorphism $\cQ|_{\bbP(L)} \simeq \cO_{\bbP(L)}(1) \oplus \cO^{\oplus2}$. The result on the normal bundle follows from this isomorphism. 
	\end{proof}
	\subsubsection{Geometry of the flip}
	First recall that one may identify the space of quadrics containing $\Gr(2,V_5)$ with $V_5$ under the map 
	\begin{align*}
		V_5 &\to \Hom(\Sym^2 \bigwedge^2 V_5, \det V_5) \simeq \Sym^2 \bigwedge^2 V_5^\vee \\
		v &\mapsto (\alpha \mapsto \alpha \wedge \alpha \wedge v)
	\end{align*}
	Recall also that we define the coherent sheaf $\cW := \bigwedge^2 \cR_4 \cap W$ on $\Gr(4,V_5)$; by \cite[Lemma 4.1]{GMquadrics} this is in fact a vector bundle of rank $n$. Then there is a diagram 
	\begin{equation}
		\begin{tikzcd}
			G_k(X) \arrow[rd] & & \arrow[ld] \Gr_{\Gr(4,V_5)}(k+2,\cW) \\
			& \Gr(k+2,W) &
		\end{tikzcd}
	\end{equation}
	where the left map, as usual, sends a quadric $\Sigma$ to the $(k+1)$-plane $\langle \Sigma \rangle$ it spans, and where the right map is given by the composition of the inclusion \[\Gr_{\Gr(4,V_5)}(k+2,\cW) \subset \Gr(4,V_5) \times \Gr(k+2,W)\] followed by projection onto the second factor. \par
	We can identify the defining equations for $\Gr_{\Gr(4,V_5)}(k+2,\cW)  \subset \Gr(4,V_5) \times \Gr(k+2,W)$ as follows. First, observe that the natural coevaluation map 
	\[V_5^\vee \to V_5 \otimes V_5^\vee \otimes V_5^\vee \to V_5 \otimes \bigwedge^2 V_5^\vee\]
	can be interpreted as the map sending a hyperplane $R_4 \subset V_5$ to the collection of hyperplanes on $\bigwedge^2 V_5$ containing $\bigwedge^2 R_4$; then the condition on $\Gr(4,V_5) \times \Gr(k+2,W)$ of lying in the subvariety $\Gr_{\Gr(4,V_5)}(k+2,\cW)$ is equivalent to asking that the composite map 
	\begin{equation}
		\cO_{\Gr(4,V_5)}(-1) \hookrightarrow V_5^\vee \hookrightarrow V_5 \otimes \bigwedge^2 V_5^\vee \to V_5 \otimes W^\vee \to V_5 \otimes \cU_{k+2}^\vee \label{quinticeqndef}
	\end{equation} 
	vanishes. \par 
	Now there is a natural homomorphism of bundles on $\Gr(k+2,W)$ given by the composition $V_5 \to \Sym^2 \bigwedge^2 V_5^\vee \to \Sym^2 \cU_{k+2}^\vee$ consisting of the restrictions of the Pl\"ucker quadrics to the tautological subbundle of $\Gr(k+2,W)$; the vanishing of this map gives $F_{k+1}(X)$. 
	\begin{lemma} \label{quinticprojbundlelemma}
		For $k = 0$, $\Gr_{\Gr(4,V_5)}(k+2,\cW) \times_{\Gr(k+2,W)} F_{k+1}(X)$ is a projective bundle over the components of $F_1(X)$. For $k = 1$, it is an equidimensional union of projective bundles over the components of $F_2(X)$. For $k = 2$, it is empty. 
	\end{lemma}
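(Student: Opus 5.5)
The fibre product is best understood pointwise over $F_{k+1}(X)$. For $P = \bbP(L) \in F_{k+1}(X)$ with $\dim L = k+2$, its fibre is the set of hyperplanes $R_4 \subset V_5$ with $L \subset \bigwedge^2 R_4$. Indeed, a point of $\Gr_{\Gr(4,V_5)}(k+2,\cW)$ over $L \in \Gr(k+2,W)$ is an $R_4$ with $L \subset \bigwedge^2 R_4 \cap W$, and $L \subset W$ holds automatically. Equivalently, by \eqref{quinticeqndef}, the fibre is the linear subspace of $\bbP(V_5^\vee)$ cut out by the vanishing of $V_5^\vee \to V_5 \otimes L^\vee$. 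This vanishing says that $R_4$ contains every $2$-plane parametrized by $P$, i.e. $R_4 \supset \operatorname{span}(P) := \sum_{[U_2]\in P} U_2 \subset V_5$. So the fibre over $P$ is $\bbP((V_5/\operatorname{span}(P))^\vee)$, a projective space whose dimension is governed by $s(P) := \dim \operatorname{span}(P)$. Globally, the fibre product is the projectivization $\bbP_{F}(\cK)$ of the kernel sheaf $\cK$ of $V_5^\vee \otimes \cO \to V_5 \otimes \cU_{k+2}^\vee$, and it becomes a projective bundle wherever $s(P)$ is locally constant.

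Next, compute $s(P)$ using the $\sigma/\tau$ dichotomy of Proposition \ref{fanodims}.

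For $k=0$, $P$ is a line, which is simultaneously of the form $\{V_1 \subset U_2 \subset V_3\}$. The union of the $U_2$ is $V_3$, so $s=3$ and the fibre is $\bbP^1$. The map $F_1(X) \to \Gr(3,V_5)$, $P \mapsto V_3$, is a morphism, and $\cK = (V_5/\cV_3)^\vee$ is a rank $2$ bundle. So we obtain a $\bbP^1$-bundle over each component of $F_1(X)$.

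For $k=1$ there are two types of plane.
\begin{itemize}
\item A $\tau$-plane is $\bbP(\bigwedge^2 V_3)$ itself, with span $V_3$. This gives $s=3$ and a $\bbP^1$-bundle over $F_2^\tau(X)$ via $P \mapsto V_3$.
\item A $\sigma$-plane is $\{V_1 \subset U_2 \subset V_4\}$ for a $3$-dimensional $V_4/V_1$, with span $V_4$. This gives $s=4$ and a single point, so over $F_2^\sigma(X)$ the fibre product maps isomorphically: the fibre is $\Gr(4,V_5)$-point $R_4 = V_4$, a $\bbP^0$-bundle.
\end{itemize}
Since $F_2(X) = F_2^\sigma(X) \sqcup F_2^\tau(X)$ is a disjoint union of smooth pieces (Proposition \ref{fanodims}), we get projective bundles over the components. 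Equidimensionality follows from the table:
\begin{itemize}
\item $\dim X=4$: $1+0 = 0+1$;
\item $\dim X=5$: $4+0 = 3+1$;
\item $\dim X=6$: $7+0 = 6+1$.
\end{itemize}
In each case the two pieces have the same dimension.

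For $k=2$, $F_3(X) = F_3^\sigma(X)$ consists of $\bbP^3 = \bbP(V_1\wedge V_5)$ (all of it, since $\dim$ is $3$). Its span is all of $V_5$, so no hyperplane $R_4$ contains it, and the fibre product is empty.

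The main obstacle is scheme-theoretic: one must show that the fibre product is reduced and equal to $\bbP_F(\cK)$, and that $\cK$ is locally free of the computed rank. Pointwise dimension counts do not suffice for this. The plan is to argue that on each component the map $V_5^\vee \otimes \cO_F \to V_5 \otimes \cU_{k+2}^\vee$ has constant rank $s$; this follows because the span defines a morphism $F \to \Gr(s,V_5)$ (using the universal $V_1$, $V_3$ or $V_4$ from \cite[Section 3.1]{GMquadrics}). The kernel is then the subbundle $(V_5/\cV_s)^\vee$, and the fibre product, cut out by the linear equations \eqref{quinticeqndef} in $\bbP(V_5^\vee)\times F$, is exactly its projectivization.
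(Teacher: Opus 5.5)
Your proposal is correct and follows essentially the same route as the paper. Like the paper, you compute the kernel of $V_5^\vee \to V_5 \otimes \cU_{k+2}^\vee$ pointwise from the $\sigma$/$\tau$ descriptions (your $\operatorname{span}(P)$ is the paper's $V_{k+3}$ or $V_3$, and the kernel is its annihilator), deduce constant rank on each component so that the fibre product is $\bbP(\cK)$, and then get equidimensionality from the table and emptiness for $k=2$. The only difference is cosmetic: the paper obtains local freeness of $\cK$ directly from constant rank on the reduced scheme $F_{k+1}(X)$, so the pointwise rank computation does suffice there, without invoking the span morphism to $\Gr(s,V_5)$.
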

	\begin{proof}
		For $k = 0, 1$, we check that this map is a projective bundle by showing that over $F_{k+1}(X)$ the map $\phi : V_5^\vee \to V_5 \otimes \cU_{k+2}^\vee$ has constant rank; since $F_{k+1}(X)$ is smooth and therefore reduced, this will imply that the kernel $\cK \subset V_5$ is a locally free subsheaf with locally free cokernel, and by restricting the defining equation \eqref{quinticeqndef} for $\Gr_{\Gr(4,V_5)}(k+2,\cW)$ in $\Gr(4,V_5) \times \Gr(k+2,W)$ to $F_{k+1}(X)$ it follows that the condition of a point lying on the fiber product is equivalent to asking that the map $\cO_{\Gr(4,V_5)}$ factors through $\cK$. In particular, this implies an identification \[\Gr_{\Gr(4,V_5)}(k+2,\cW) \times_{\Gr(k+2,W)} F_{k+1}(X) \simeq \bbP(\cK).\] \par 
		Fix now a subspace $U_{k+2} \subset W \subset \bigwedge^2 V_5$ such that $\bbP(U_{k+2}) \in F_{k+1}^\sigma(X)$; since we have assumed $U_{k+2}$ is a $\sigma$-space, it follows that there exist linear subspaces $V_1 \subset V_{k+3} \subset V_5$ of dimensions $1$ and $k + 3$ respectively such that $U_{k+2} = V_1 \wedge V_{k+3}$. For a linear functional $\psi \in V_5^\vee$ to vanish under $\phi$ at this point is equivalent to the inclusion $V_{k+3} \subset \ker \psi$. In particular, we see that $\ker \phi = V_{k+3}^\perp \subset V_5^\vee$, implying that $\dim \ker \phi = 5 - (k+3)$; hence $\phi$ is of constant rank over $F_{k+1}^\sigma$, and in fact the fiber of $\cK$ is precisely $\ker \phi$. \par 
		Suppose we take $U_{k+2} \subset W \subset \bigwedge^2 V_5$ to lie in $F_{k+1}^\tau(X) \setminus F_{k+1}^\sigma(X)$; of course, by Proposition \ref{fanodims} this can only happen when $k = 1$. Then by this assumption, there exists a 3-dimensional $V_3 \subset V_5$ such that $\bbP(U_{3}) \subset \bbP(\bigwedge^2 V_3) \subset \Gr(2,V_5)$; in this case we must have $U_3 = \bigwedge^2 V_3$. Then a linear functional $\psi \in V_5^\vee$ will vanish under $\phi$ if and only if $V_3 \subset \ker \psi$, or equivalently $\ker \phi = V_3^\perp$, which will be of dimension $5 - 3 = 2$. Hence again $\phi$ is of constant rank, and the fiber of $\cK$ is this subspace. \par 
		For $k = 0$, as $F_1(X) = F_1^\sigma(X)$ is always connected, our desired result follows and we see that $\cK$ is of rank 2. For $k = 1$, over $F_{2}^\sigma(X)$ we see that $\cK$ is of rank 1, and over $F_{2}^\tau$, $\cK$ is of rank 2. Then \[\dim \bbP(\cK) \times_{\Gr(3,W)} F_2^\sigma(X) = \dim F_2^\sigma(X) = \dim F_2^\tau(X) + 1 = \dim \bbP(\cK) \times_{\Gr(3,W)} F_2^\tau(X), \]
		and hence $\Gr_{\Gr(4,V_5)}(3,\cW) \times_{\Gr(3,W)} F_{2}(X)$ is indeed equidimensional. \par 
		For $k = 2$, the emptiness is a consequence of our calculation for the fibers of the kernel of $\phi$ for $\sigma$-spaces; indeed, the calculation above shows that $\phi$ is necessarily an injective map and hence the defining equation \eqref{quinticeqndef} does not vanish over $F_{3}(X)$. 
	\end{proof}
	We are now in a position to show that we are in of Situation \ref{flippingsituation}. We take: 
	\begin{itemize}
		\item $G = \Gr(k+2,W)$, with vector bundles $\Sym^2 \cU_{k+2}^\vee$ and $V_5^\vee \otimes \cO$ (as before, we will write this simply as $V_5^\vee)$, 
		\item $Y = G_k(X)$ and $Y' = \Gr_{\Gr(4,V_5)}(k+2,\cW)$, with inclusions $G_k(X) \hookrightarrow G_k(\bbP(W)) \simeq \bbP_G(\Sym^2 \cU_{k+2}^\vee)$ and $\Gr_{\Gr(4,V_5)}(k+2,\cW) \hookrightarrow \Gr(k+2,W) \times \bbP(V_5^\vee)$, and 
		\item $F = F_{k+1}(X)$, with $Z := Y \times_G F \simeq \bbP(\Sym^2 \cU_{k+2}^\vee|_F)$ the quadrics whose linear spans lie within $X$, and $X' := Y' \times_G F$ which is either a projective bundle, an equidimensional union of projective bundles, or empty by Lemma \ref{quinticprojbundlelemma}.
	\end{itemize}
	\begin{lemma} \label{smoothnesslemmaquintics}
		The variety $G_k(X)$ is smooth along the subvariety $Z$. 
	\end{lemma}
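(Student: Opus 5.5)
We follow the same route as Lemmas \ref{smoothnesslemmacubics} and \ref{smoothnesslemmaquartic}. Let $\Sigma \in Z$ and $P = \Span{\Sigma} = \bbP(L)$, a $(k+1)$-plane contained in $X$. Here $k \leq 2$ by Proposition \ref{fanodims}, and $P$ is a $\sigma$-plane or a $\tau$-plane. Since $\Sigma \subset P \subset X$ is a composition of local complete intersections, it suffices to prove $h^1(\cN_{\Sigma/X}) = 0$. We use the two normal bundle sequences
\[0 \to \cO_\Sigma(2) \to \cN_{\Sigma/X} \to \cN_{P/X}|_\Sigma \to 0, \qquad 0 \to \cN_{P/X} \to \cO_P(1)^{\oplus(n+2-k)} \to \cN_{X/\bbP(W)}|_P \to 0.\]
We already know $h^1(\cO_\Sigma(2)) = 0$ from the restriction sequence $0\to\cO_P\to\cO_P(2)\to\cO_\Sigma(2)\to 0$. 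It therefore remains to show $h^1(\cN_{P/X}|_\Sigma)=0$. Using $0 \to \cN_{P/X}(-2) \to \cN_{P/X} \to \cN_{P/X}|_\Sigma \to 0$, this reduces to two vanishings: $h^1(\cN_{P/X}) = 0$ and $h^2(\cN_{P/X}(-2)) = 0$.

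For the second vanishing, twist the second sequence by $(-2)$. This gives $H^1(\cN_{X/\bbP(W)}|_P(-2)) \to H^2(\cN_{P/X}(-2)) \to H^2(\cO_P(-1))^{\oplus(n+2-k)} = 0$. By Lemma \ref{normalrestriction}, $\cN_{X/\bbP(W)}|_P(-2)$ is either $\cQ_P^\vee \oplus \cO_P^{\oplus(2-k)}$ in the $\sigma$ case or $\cO_P(-1)\oplus\cO_P^{\oplus 2}$ in the $\tau$ case. Since $\cQ_P^\vee \simeq \Omega_P(1)$ has no cohomology, both summand types have vanishing $H^1$, and so $h^2(\cN_{P/X}(-2))=0$.

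For $h^1(\cN_{P/X}) = 0$ there is no longer a clean dimension count as in the cubic case. Instead we argue via Euler characteristics. The same sequence shows $H^i(\cN_{P/X}) = 0$ for $i \geq 2$: indeed $H^{i-1}(\cN_{X/\bbP(W)}|_P)=0$ for $i-1\ge 1$, since $\cQ_P^\vee(2)\simeq\Omega_P(3)$, $\cO_P(1)$ and $\cO_P(2)$ have no higher cohomology, and $H^i(\cO_P(1))=0$ for $i \geq 1$. Hence $h^1(\cN_{P/X}) = h^0(\cN_{P/X}) - \chi(\cN_{P/X})$. By Proposition \ref{fanodims}, $F_{k+1}(X)$ is smooth, so $h^0(\cN_{P/X}) = \dim T_{F_{k+1}(X),P}$ equals the dimension in the table, taken for the component ($\sigma$ or $\tau$) containing $P$. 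Computing
\[\chi(\cN_{P/X}) = (n+2-k)(k+2) - \chi(\cN_{X/\bbP(W)}|_P)\]
from Lemma \ref{normalrestriction}, with $\chi(\Omega_P(3))$ given by the Euler sequence, we then check that $\chi$ agrees with the table for every $n$ and every $k\in\{0,1,2\}$, separately for $\sigma$-planes and $\tau$-planes.

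This case-by-case numerical check is the main obstacle. If some entry fails to match, we would instead prove surjectivity of $H^0(\cO_P(1))^{\oplus(n+2-k)} \to H^0(\cN_{X/\bbP(W)}|_P)$ directly, i.e.\ surjectivity of restricting the linear forms on $W$ to the Pl\"ucker-type normal directions along $P$. This can be done explicitly in coordinates adapted to $V_1 \subset V_{k+3}$ or to $V_3$, using transversality of $\bbP(W)$. Once $h^1(\cN_{P/X})=0$ is established, the first normal bundle sequence gives $h^1(\cN_{\Sigma/X})=0$, and hence $G_k(X)$ is smooth along $Z$.
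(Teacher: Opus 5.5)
Your argument follows the paper's proof essentially step for step: your Euler-characteristic formulation $h^1(\cN_{P/X}) = h^0(\cN_{P/X}) - \chi(\cN_{P/X})$ is equivalent to the paper's identity $h^0(\cN_{P/X}) + h^0(\cN_{X/\bbP(W)}|_P) = (n-k+2)(k+2)$, because $\cN_{X/\bbP(W)}|_P$ has no higher cohomology, so it is the same dimension count as in the cubic case. The case check you flagged as the main obstacle does go through --- the predicted dimension is $2n-4$, $3n-11$, $4n-20$ for $\sigma$-planes with $k=0,1,2$, and $(k+2)(n-2k-2)$ for $\tau$-planes, matching every nonempty entry in the table of Proposition \ref{fanodims} --- so the fallback is unnecessary.
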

	\begin{proof}
		The strategy is essentially the same as the cases of cubics or intersections of two quadrics. As before, it suffices to compute $H^1(\cN_{\Sigma/X}) = 0$ for $\Sigma \in G_k(X)$ such that $\langle \Sigma \rangle \subset X$. As per usual, we make use of normal bundle sequences 
		\begin{equation}
			\label{nmbdlseqquintics1}
			\begin{gathered}
				0 \to \cN_{\Sigma/\langle \Sigma \rangle} \simeq \cO_{\Sigma}(2) \to \cN_{\Sigma/X} \to \cN_{\langle \Sigma \rangle/X}|_{\Sigma} \to 0 \\
				0 \to \cN_{\langle \Sigma \rangle / X} \to \cO_{\langle \Sigma \rangle}(1)^{\oplus(n-k+2)} \to \cN_{X/\bbP(W)}|_{\langle \Sigma \rangle} \to 0 
			\end{gathered}
		\end{equation}
		and observe as before that to check $H^1(\cN_{\langle \Sigma \rangle/X}) = 0$, it is enough to check that the \[H^0(\cO_{\langle \Sigma \rangle}(1))^{\oplus(n-k+2)} \to H^0(\cN_{X/\bbP(W)}|_{\langle \Sigma \rangle})\] coming from the second sequence of \eqref{nmbdlseqquintics1} is surjective. In fact, once we know this we are essentially done; indeed, twisting the second sequence of \eqref{nmbdlseqquintics1} by $(-2)$ and taking cohomology, we find an exact sequence
		\[H^1(\cN_{X/\bbP(W)}(-2)|_{\langle \Sigma \rangle}) \to H^2(\cN_{\langle \Sigma \rangle/X}(-2)) \to H^2(\cO_{\langle\Sigma\rangle}(-1))^{\oplus(n-k+2)} = 0.\]
		But by Lemma \ref{normalrestriction}, either \[\cN_{X/\bbP(W)}(-2)|_{\langle \Sigma \rangle} \simeq \cQ_{\langle \Sigma \rangle}^\vee \oplus \cO_{\langle \Sigma \rangle}^{\oplus(2-k)}\] or \[\cN_{X/\bbP(W)}(-2)|_{\langle \Sigma \rangle} \simeq \cO_{\langle \Sigma \rangle}(-1) \oplus \cO_{\langle \Sigma \rangle}^{\oplus 2},\]
		and in either case it is easy to see that $H^1(\cN_{X/\bbP(W)}(2)|_{\langle \Sigma \rangle}) = 0$. Then using the restriction sequence 
		\begin{equation}
			0 \to \cN_{\langle \Sigma \rangle/X}(-2) \to \cN_{\langle \Sigma \rangle/X} \to \cN_{\langle \Sigma \rangle/X}|_{\Sigma} \to 0
		\end{equation}
		we see that $H^1(\cN_{\langle \Sigma \rangle/X}|_{\Sigma}) = 0$, and combining with the first sequence of \eqref{nmbdlseqquintics1} we conclude that $H^1(\cN_{\Sigma/X}) = 0$. \par 
		We return now to showing the surjectivity of $H^0(\cO_{\langle \Sigma \rangle}(1))^{\oplus(n-k+2)} \to H^0(\cN_{X/\bbP(W)}|_{\langle \Sigma \rangle})$, which we can check by verifying 
		\begin{equation}
			h^0(\cN_{\langle \Sigma \rangle/X})  + h^0(\cN_{X/\bbP(W)}|_{\langle \Sigma \rangle}) = h^0(\cO_{\langle \Sigma \rangle}(1))^{\oplus(n-k+2)}. \label{surjectivityidentity}
		\end{equation}
		Assume first that $\langle \Sigma \rangle$ is a $\sigma$-plane. Then by Proposition \ref{fanodims}, $h^0(\cN_{\langle \Sigma \rangle/X}) = \dim F_{k+1}^\sigma(X)$. On the other hand, by Lemma \ref{normalrestriction}, $\cN_{X/\bbP(W)}|_{\langle \Sigma \rangle} = \cQ_{\langle \Sigma \rangle}^\vee(2) \oplus \cO_{\langle \Sigma \rangle}(2)^{\oplus(2-k)}$. But a calculation from the cohomology of line bundles on projective space gives $h^0(\cO_{\langle \Sigma \rangle}(2)) = {k+3 \choose 2}$, and it follows from the Borel--Weil--Bott theorem or the long exact sequence on cohomology for a twist of the Euler sequence that 
		\[h^0(\cQ^\vee_{\langle \Sigma \rangle}(2)) = \frac{(k+1)(k+2)(k+3)}{3}\]
		and so 
		\[h^0(\cN_{X/\bbP(W)}|_{\langle \Sigma \rangle}) = \frac{(k+1)(k+2)(k+3)}{3} + (2-k){k+3 \choose 2}. \]
		It is then easy to check case by case using Proposition \ref{fanodims} that for $k = 0, 1$ or $2$ (when $F_{k+1}^\sigma(X)$ is nonempty), the identity
		\[\dim F_{k+1}^\sigma(X) = (n-k+2)(k+2) - \frac{(k+1)(k+2)(k+3)}{3} - (2-k){k+3 \choose 2}\]
		holds, which implies that the identity for $h^0$ in \eqref{surjectivityidentity} holds. \par 
		Assume now that $\langle \Sigma \rangle$ is a $\tau$-plane. As before, to show \eqref{surjectivityidentity} it reduces to showing the identity 
		\[\dim F_{k+1}^\tau(X) = (n-k+2)(k+2) - h^0(\cN_{X/\bbP(W)}|_{\langle \Sigma \rangle})\]
		whenever $F_{k+1}^\tau(X)$ is nonempty, and in this case Lemma \ref{normalrestriction} shows that $\cN_{X/\bbP(W)}|_{\langle \Sigma \rangle} = \cO_{\langle \Sigma\rangle}(1) \oplus \cO_{\langle \Sigma \rangle}(2)^{\oplus 2}$, so 
		\[h^0(\cN_{X/\bbP(W)}|_{\langle \Sigma \rangle}) = (k+2) + 2{k+3 \choose 2}.\]
		As above, when $k = 0$ or $1$ (when $F_{k+1}^\tau(X)$ is nonempty), the identity 
		\[\dim F_{k+1}^\sigma(X) = (n-k+2)(k+2) - (k+2) - 2{k+3 \choose 2}\]
		holds by a case by case analysis of Proposition \ref{fanodims}. \par 
	\end{proof}
	To construct our map $\textbf{s} : \cO_{Y'}(1) \to \Sym^2 \cU_{k+2}^\vee$ on $Y'  = \Gr_{\Gr(4,V_5)}(k+2,\cW)$, we begin with the map 
	\[\cR_4 \hookrightarrow V_5 \hookrightarrow \Sym^2 V_5^\vee \twoheadrightarrow \Sym^2 \cW^\vee \twoheadrightarrow \Sym^2 \cU_{k+2}^\vee,\]
	coming from composition of the tautological inclusion $\cR_4 \hookrightarrow V_5$ coming from $\Gr(4,V_5)$ with the restriction of Pl\"ucker quadrics. By \cite[Lemma 4.3]{GMquadrics}, the composition $\cR_4 \to \Sym^2 \cW^\vee$ vanishes, so in particular the map $V_5 \to \Sym^2 \cW^\vee \to \Sym^2 \cU_{k+2}^\vee$ factors as 
	\begin{equation}
		V_5 \to \cO_{Y'}(1) \to \Sym^2 \cW^\vee \twoheadrightarrow \Sym^2 \cU_{k+2}^\vee,
	\end{equation}
	and we define $\textbf{s} : \cO_{Y'}(1) \to \Sym^2 \cU_{k+2}^\vee$ to be the composition of the second two maps in the factorization above. \par 
	Geometrically, for a point $(R_4, U_{k+2}) \in \Gr_{\Gr(4,V_5)}(k+2,\cW)$, $\textbf{s}$ picks out the quadric on $\bbP(U_{k+2})$ coming from taking a Pl\"ucker quadric not in $R_4$ and restricting it to $\bbP(U_{k+2})$. 
	\begin{lemma} \label{sectionlemmaquintics}
		The map $\textbf{s}$ is injective with locally free cokernel along $U'$, and induces an isomorphism onto $U$. Moreover, when $Z' = V(\textbf{s})$ is nonempty, it has codimension equal to $\on{rank}\Sym^2 \cU_{k+2}^\vee$. 
	\end{lemma}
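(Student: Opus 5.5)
I will follow the template of Lemmas \ref{sectionlemmacubics} and \ref{sectionlemmaquartics}, organized around the factorization $V_5 \to \cO_{Y'}(1) \xrightarrow{\textbf{s}} \Sym^2 \cU_{k+2}^\vee$. The first map is surjective: it is the dual of the tautological line $\cO_{Y'}(-1)\hookrightarrow V_5^\vee$ pulled back from $\Gr(4,V_5)\simeq\bbP(V_5^\vee)$, and its kernel is $\cR_4$.

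\textbf{Step 1: $V(\textbf{s}) = Z'$.} The composite $V_5 \to \Sym^2\cU_{k+2}^\vee$ is the restriction of the Pl\"ucker quadrics to $\cU_{k+2}$. Its vanishing cuts out $F_{k+1}(X)\subset\Gr(k+2,W)$, so its pullback to $Y'$ cuts out $Z'=Y'\times_G F$. Since $V_5\to\cO_{Y'}(1)$ is surjective, the composite vanishes at a point if and only if $\textbf{s}$ does. This proves $V(\textbf{s})=Z'$ scheme-theoretically.

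\textbf{Step 2: the map on $U'$.} On $U'$ the composite is fiberwise nonzero, hence so is $\textbf{s}$. Therefore $\textbf{s}$ is injective with locally free cokernel there and defines $U'\to\bbP(\Sym^2\cU_{k+2}^\vee)$. The image lies in $G_k(X)$: the quadric is a Pl\"ucker quadric restricted to $\bbP(U_{k+2})$, and every other Pl\"ucker quadric (those coming from $R_4$) vanishes on $\bbP(U_{k+2})$ because $U_{k+2}\subset\bigwedge^2 R_4$. So the restricted ideal is generated by this single quadric, and the intersection $\bbP(U_{k+2})\cap\Gr(2,V_5)$ is exactly that quadric. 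Its image avoids $Z$, so it lands in $U$.

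\textbf{Step 3: the inverse $U\to U'$.} On $Y=G_k(X)$ the restriction $V_5\to\Sym^2\cU_{k+2}^\vee$ factors through the tautological line $\cO_Y(-1)$, since the universal quadric lies in every Pl\"ucker quadric. Over $U$ this map $V_5\to\cO_Y(-1)$ is surjective, so its kernel is a rank $4$ subbundle. This gives a map $U\to\Gr(4,V_5)$. By construction the Pl\"ucker quadrics coming from the kernel $R_4$ vanish on $U_{k+2}$.

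The key point is to deduce $U_{k+2}\subset\bigwedge^2 R_4$, which means the image lands in $Y'$. For this I would use \eqref{quinticeqndef} together with the geometric fact that a quadric $q_v$ ($v\in V_5$) vanishes on $\bbP(U)$ for all $v\in R_4$ exactly when $U\subset\bigwedge^2 R_4$; equivalently, the map $\phi$ of Lemma \ref{quinticprojbundlelemma} has kernel containing the line $R_4^\perp$. I expect this identification, which is the inverse of \cite[Lemma 4.3]{GMquadrics}, to be the main obstacle. I would verify it pointwise via linear algebra of $2$-forms: if $\alpha\wedge\alpha\wedge v=0$ for all $v\in R_4$ and all $\alpha\in U$, then $\alpha\wedge\alpha\in\bigwedge^4 V_5$ is annihilated by $R_4$, and polarization controls the decomposables. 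I would then pass to families using reducedness of $U$, which is smooth by Situation \ref{flippingsituation}'s conclusion on $U'$ once the bijection is set up. Checking that the two compositions are the identity is then formal, exactly as in the quartic case: both maps identify the line $\cO(1)$ with $\im(V_5\to\Sym^2\cU_{k+2}^\vee)$.

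\textbf{Step 4: codimension.} When $Z'\neq\varnothing$ (so $k=0,1$), Lemma \ref{quinticprojbundlelemma} identifies $Z'=\bbP(\cK)$. I will compute $\dim Z'$ from Proposition \ref{fanodims}: $\dim F_{k+1}(X)+1$ for $k=0$, and, by equidimensionality, $\dim F_2^\sigma(X)$ for $k=1$. I will compute $\dim Y'=(n-1)+(k+2)(n-k-2)+\ldots$, that is, $4$ plus the dimension of $\Gr(k+2,n)$. Finally I will check case by case for $n=2,\dots,6$ that the difference equals $\binom{k+3}{2}$.
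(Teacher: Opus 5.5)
Your outline follows the paper's proof: the factorization $V_5\to\cO_{Y'}(1)\xrightarrow{\textbf{s}}\Sym^2\cU_{k+2}^\vee$, the inverse built from the kernel of $V_5\to\cO_Y(-1)$, the formal check of the compositions, and the case-by-case dimension count. Steps 1, 2 and 4 are fine.

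\textbf{The gap in Step 3.} You pass from points to families ``using reducedness of $U$, which is smooth by Situation \ref{flippingsituation}'s conclusion on $U'$ once the bijection is set up.'' This is circular. In Situation \ref{flippingsituation}, the smoothness of $Y$ away from $Z$ is \emph{deduced from} the isomorphism $U'\simeq U$, which is exactly what this lemma must establish. A pointwise bijection says nothing about the scheme structure of $G_k(X)$ along $U$: if $G_k(X)$ were non-reduced there, your morphism $U'\to U$ would only be a bijective closed immersion onto $U_{\mathrm{red}}$. To build the inverse morphism you must show that the composite \eqref{quinticeqndef} vanishes on $U$ scheme-theoretically. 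The paper does this locally on $U$, using the identity of \emph{schemes} $\bigcap_{v\in R_4}Q_v=\Gr(2,V_5)\cup\bbP(\bigwedge^2R_4)$ together with the nonvanishing of the remaining Pl\"ucker quadric.

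One clean way to carry this out is as follows. Locally split $V_5=R_4\oplus\Span{e_5}$ and write $\alpha=\alpha'+y\wedge e_5$ with $\alpha'\in\bigwedge^2R_4$ and $y\in R_4$. Then the quadrics $q_v$ for $v\in R_4$ are the components of $\alpha'\wedge y\in\bigwedge^3R_4$, while $q_{e_5}$ is, up to a constant, $\on{Pf}(\alpha')$. The Pfaffian adjugate identity $\on{Pf}(\alpha')\,y=\widetilde{\alpha'}(\alpha'\wedge y)$ holds as a polynomial identity on $\cU_{k+2}$ over any base. Over a local ring of $U$, the form $\on{Pf}(\alpha')|_{\cU_{k+2}}$ has a unit coefficient, so it is a non-zero-divisor in the polynomial ring. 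This forces $y|_{\cU_{k+2}}=0$, i.e.\ $\cU_{k+2}\subset\bigwedge^2\cR_4$, with no reducedness needed.

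\textbf{The misstated ``geometric fact''.} As you state it, the fact is false. If $\bbP(U_{k+2})\subset\Gr(2,V_5)$, then every $q_v$ vanishes on it; yet a line $\bbP(V_1\wedge V_3)$ with $V_1\not\subset R_4$ does not lie in $\bbP(\bigwedge^2R_4)$. The correct implication needs the hypothesis $\bbP(U_{k+2})\not\subset\Gr(2,V_5)$. This holds on $U$ because $U_{k+2}\subset W$ and $\bbP(U_{k+2})\not\subset X$. It is exactly what supplies a rank-$4$ element of $U_{k+2}$, without which ``polarization controls the decomposables'' cannot work. In the paper, this is the role of the hypothesis that the fifth Pl\"ucker equation does not vanish.

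\textbf{Minor.} Your first expression for $\dim Y'$ is garbled; the correct value is $4+(k+2)(n-k-2)$, as you then say.
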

	\begin{proof}
		Recall that $U'$ is defined as the complement of $F$ on $\Gr(k+2,W)$, where $F$ is cut out by the vanishing of the map $V_5 \to \Sym^2 \cU_{k+2}^\vee$. \par 
		As in the case of the intersection of two quadrics, to check $\textbf{s}$ is injective with locally free cokernel along $U'$ it suffices to observe that the map $V_5 \to \Sym^2 \cU_{k+2}^\vee$ is fiberwise nonzero along $U'$; this implies that the map $\cO_{Y'}(1) \to \Sym^2 \cU_{k+2}^\vee$ is fiberwise injective and therefore injective with locally free cokernel. In particular, we have the existence of a map $U' \to \bbP(\Sym^2 \cU_{k+2}^\vee)$, which as before lands in $G_k(X)$ (and moreover in $U$) as the associated family of quadrics by construction lies inside of $X$. \par 
		To produce the inverse map, consider again the map 
		\[V_5 \to \Sym^2 \cU_{k+2}^\vee.\]
		Since each Pl\"ucker quadric contains the universal quadric over $G_k(X)$, this map thus factors as 
		\[V_5 \to \cO_Y(-1) \hookrightarrow \Sym^2 \cU_{k+2}^\vee,\]
		and we take $\textbf{s}' : \cO_Y(1) \to V_5^\vee$ to be the dual of the first map; as $V_5 \to \Sym^2 \cU_{k+2}^\vee$ is nonzero over $U$, the map $V_5 \to \cO_Y(-1)$ is necessarily a surjection and hence $\textbf{s}'$ is an injection with locally free cokernel over $U$. As a consequence, $\textbf{s}'$ defines a map \[U \to \bbP(V_5^\vee) \times \Gr(k+2,W) = \Gr(4,V_5) \times \Gr(k+2,W).\] 
		To see that the resulting morphism lands in $\Gr_{\Gr(4,V_5)}(k+2,\cW)$ is to ask that the composition 
		\[\cO_{Y}(1) \xrightarrow{\textbf{s}'} V_5^\vee \to V_5 \otimes \bigwedge^2 V_5^\vee \to V_5 \otimes \cU_{k+2}^\vee\]
		vanishes, where the second map is coevaluation and the third is restriction. Working locally on $U$, we may pick an open cover $\bigcup_i U_i$ which will trivialize $\cO_Y(1)$ and $\coker(\textbf{s}')$ so that $V_5^\vee$ admits a basis of sections $x_1, \dots, x_4, x_5$ over $U_i$ where $x_1, \dots, x_4$ are a basis for $\coker(\textbf{s}')|_{U_i}$ and $x_1$ is a basis for $\cO_Y(1)$. Writing $x_i \wedge x_j$ as $x_{ij}$, the map $V_5 \to \Sym^2 \cU_{k+2}^\vee$ is then given by the restriction of the Pl\"ucker quadrics 
		\begin{equation} \label{pluckerrestrictions}
			\begin{gathered}
				x_{23}x_{45} - x_{24}x_{35} + x_{25}x_{34} \\
				x_{13}x_{45} - x_{14}x_{35} + x_{15}x_{34} \\
				x_{12}x_{45} - x_{14}x_{25} + x_{15}x_{24} \\
				x_{12}x_{35} - x_{13}x_{25} + x_{15}x_{23} \\
				x_{12}x_{34} - x_{13}x_{24} + x_{14}x_{23}
			\end{gathered}
		\end{equation}
		to $\cU_{k+2}$, while the composition $\cO_Y(1) \to V_5 \otimes \cU_{k+2}^\vee$ gives equations 
		\begin{equation} \label{otherrestrictions}
			x_{15}, x_{25}, x_{35}, x_{45}
		\end{equation}
		on $\cU_{k+2}$. But it is a straightforward check that if the last equation of \eqref{pluckerrestrictions} does not vanish but the first four equations do, the equations \eqref{otherrestrictions} must also vanish. In fact, this is classical, and amounts to the assertion that for any $U_4 \subset V_5$, we have an identity of schemes 
		\[\bigcap_{v \in U_4} Q_u = \Gr(2,V_5) \cup \bbP(\bigwedge^2 U_4)\]
		where $Q_v$ is the Pl\"ucker quadric associated to $v \in V_5$. \par 
		The check that the compositions $U \to U' \to U$ and $U' \to U \to U'$ yield the identity is similar to the case of the intersection of two quadrics, so we omit it. \par 
		We are left only to check that the codimension of $Z'$ is equal to rank $\Sym^2 \cU_{k+2}^\vee$ when $k = 0$ or $1$. However, the arguments of Lemma \ref{quinticprojbundlelemma} show that $\dim Z' = \dim F_{k+1}^\sigma(X) + 1 - k$, so it suffices to check that this quantity agrees with
		\begin{equation}
				\dim Y' - \on{rank} \Sym^2 \cU_{k+2}^\vee = (n-k-2)(k+2) + 4 - {k+3 \choose 2}.
		\end{equation}
	 	When $k = 0$ this can be rewritten as $2n - 3$, and when $k = 1$ it becomes $3n - 11$, which one can check by hand using Proposition \ref{fanodims} to agree with $\dim F_{k+1}^\sigma(X) + 1 - k$. 
	\end{proof}
	\begin{proof}[Proof of Theorem \ref{geomflipquintic}]
		Combine Lemma \ref{flippinglemma} with Lemmas \ref{smoothnesslemmaquintics} and \ref{sectionlemmaquintics}. 
	\end{proof}
	\section{An application to semiorthogonal decompositions for orthogonal Grassmannian fibrations} \label{application}
	As a key example, let us specialize to the case where $k = 0$ and $X \subset \bbP^{n+2}$ is a complete intersection of two quadrics where $n$ is odd. In this case, our semiorthogonal decomposition from Corollary \ref{sodcorollary} reduces to the statement 
	\begin{equation}\label{quadfibsod}
		\Db(X^{[2]}) = \langle\Db(\OGr(2,\fQ)), \Db(F_1(X))\rangle.
	\end{equation}
	In fact $\Db(X)$ admits a semiorthogonal decomposition which can be written as 
	\begin{equation}
		\Db(X) = \langle \Db(C), \cO, \dots, \cO(n-2) \rangle 
	\end{equation}
	where $C$ is the hyperelliptic curve branched over the singular quadrics in the pencil spanned by the two quadrics \cite[Theorem 2.9]{bondalorlov}. On the other hand, applying \cite[Theorem 3.1]{krug} to the double cover $\Bl_\Delta(X \times X)$ and using the blowup decomposition, we have a decomposition
	\begin{equation}
		\Db(X^{[2]}) = \langle \Db_{\bbZ/2}(X \times X), \underbrace{\Db(X),\dots,\Db(X)}_{n-2\text{ times}}, \rangle 
	\end{equation}
	and using \cite[Theorem 1.1]{koseki} we find 
	\begin{equation}
		\begin{aligned}
			\Db_{\bbZ/2}(X \times X) &\simeq \Sym^2 \Db(X) \\
			&= \langle \Sym^2 \Db(C), \underbrace{\Db(C), \dots, \Db(C)}_{n-1 \text{ times}}, \underbrace{\Db(\bbC), \dots, \Db(\bbC)}_{{n-1 \choose 2}\text{ times}}, \underbrace{\Sym^2 \Db(\bbC), \dots, \Sym^2 \Db(\bbC)}_{n-1\text{ times}}\rangle \\
			&= \langle \Db(\Sym^2 C), \underbrace{\Db(C), \dots, \Db(C)}_{n\text{ times}}, \underbrace{\Db(\bbC),\dots,\Db(\bbC)}_{{n-1 \choose 2} + 2(n-1)\text{ times}}\rangle
		\end{aligned}
	\end{equation}
	where we have applied the equivalence for a root stack 
	\[\Sym^2 \Db(C) \simeq \Db_{\bZ/2}(C \times C) = \langle\Db(\Sym^2 C),\Db(C)\rangle\]
	and 
	\[\Sym^2 \Db(\bbC) = \Db_{\bZ/2}(\bbC) = \langle \Db(\bbC), \Db(\bbC)\rangle.\]
	Combining the statements above, we find 
	\begin{equation}
		\Db(X^{[2]}) = \langle \Db(\Sym^2 C), \underbrace{\Db(C),\dots,\Db(C)}_{2n-2\text{ times}}, \underbrace{\Db(\bbC),\dots,\Db(\bbC)}_{{n-1\choose 2} + 2(n-1) + (n-1)(n-2)\text{ times}}\rangle.
	\end{equation}
	On the other hand, specializing the main conjecture of \cite{fanoschemequaddecomp} to the case of $\Db(F_1(X))$ yields the following conjecture. 
	\begin{conjecture}
		\label{fanoconjecture}
		Let $X = Q_1 \cap Q_2 \subset \bbP^{n+2}$ be a smooth complete intersection of two quadrics in an odd-dimensional projective space. Then there is a semiorthogonal decomposition 
		\begin{equation}
			\label{fanosod}
			\Db(F_1(X)) = \langle\Db(\Sym^2 C), \underbrace{\Db(C),\dots,\Db(C)}_{n - 3 \text{ times}}, \underbrace{\Db(\bbC),\dots,\Db(\bbC)}_{{n-4\choose 2} + 2(n-4) \text{ times}}\rangle. 
		\end{equation}
	\end{conjecture}
	This motivates a natural conjecture for a semiorthogonal decomposition of $\Db(\OGr(2,\fQ))$ given by ``subtraction'' of the semiorthogonal decomposition \eqref{fanosod} from \eqref{quadfibsod}. 
	\begin{conjecture}
		\label{pencilconjecture}
		Let $\langle Q_1, Q_2 \rangle$ be a pencil of quadrics in $\bbP^{n+2}$ whose base locus is smooth in an odd-dimensional projective space. Then there is a semiorthogonal decomposition 
		\begin{equation}
			\Db(\OGr(2,\fQ)) = \langle \underbrace{\Db(C), \dots, \Db(C)}_{n+1\text{ times}}, \underbrace{\Db(\bbC), \dots, \Db(\bbC)}_{(n-1)(n+1)\text{ times}}\rangle.
		\end{equation}
	\end{conjecture}
	Formally, one can interpret the relation between the two Conjectures \ref{fanoconjecture} and \ref{pencilconjecture} as giving equivalent identities in the Grothendieck ring of categories \cite{grothring}. In fact, this conjecture should be interpreted as a relativization of the full exceptional collections for Grassmannians of isotropic lines in \cite{kuznetsoveven}, and admits a natural generalization to any family of quadrics, though we impose a mild condition on the rank of the quadrics to ensure the fibration of orthogonal Grassmannians is flat. 
	\begin{conjecture}
		\label{cliffordconjecture}
		Let $p : \fQ \to S$ be any quadric fibration of relative dimension $n + 1$ over a smooth base such that the fibers $\fQ_s$ are quadrics of rank at least 4, where $n$ is odd. Then there exists a semiorthogonal decomposition 
		\begin{equation}
			\Db(\OGr(2,\fQ)) = \langle \underbrace{\Db(S,\cl_0), \dots, \Db(S,\cl_0)}_{n+1\text{ times}}, \underbrace{\Db(S), \dots, \Db(S)}_{\frac{(n-1)(n+1)}{2}\text{ times}}\rangle,
		\end{equation}
		where $\Db(S,\cl_0)$ is the derived category of the sheaf of even Clifford algebras over the base $S$ associated to the quadric fibration $\fQ\to S$. 
	\end{conjecture}
	\begin{remark}
		\begin{itemize}
			\item When $S = \bbP^1$ and $\fQ \to S$ is a nontrivial pencil of quadrics with smooth base locus, this reduces to Conjecture \ref{pencilconjecture} by the central reduction in \cite{quadfibs}. We use also that $\Db(\bbP^1)$ has two exceptional objects. 
			\item When $S$ is a point and $\fQ\to S$ is a single smooth quadric in an odd-dimensional projective space, this reduces to a full exceptional collection as in \cite{kuznetsoveven}, as in this case $\Db(S,\cl_0) \simeq \Db(\bbC \sqcup \bbC)$. 
		\end{itemize}
	\end{remark}
	We will give partial progress on this conjecture in forthcoming work. \par 
	Note that while a similar calculation can be done for quadrics in odd-dimensional projective spaces, the analogous conjecture is less clear. 
	\section{A counterexample for low degree del Pezzo varieties} \label{counterexample}
	Even in the cases where the del Pezzo variety has degree $d = 1$ or $2$, it is still possible to make sense of the Fano scheme of lines for $d \leq 2$ by defining it as the Hilbert scheme of smooth rational curves with the appropriate Hilbert polynomial. It is reasonable to ask whether some version of Theorems \ref{delpezzoflip} or \ref{geomflipthm} may hold in this situation. However, one should observe that the argument used in Theorem \ref{delpezzoflip} relies on the fact that the natural polarization for the del Pezzo variety $X$ is very ample; in particular, the natural map $\bbP(\Sym^2 \cU_2^\vee) \to X^{[2]}$ fails to be a closed immersion unless $d \geq 3$. Nonetheless, one might hope that the decomposition on the level of derived categories holds. \par 
	In the case for $d = 2$ and dimension 3, the del Pezzo $X$ is the quartic double solid, and when the branching quartic contains no lines, the variety $F_1(X)$ is a surface whose cohomological invariants have been studied in detail \cite{welters}. In this case, it is possible to show that Theorem \ref{delpezzoflip} cannot possibly hold. 
	\begin{proposition}
		Suppose that $X$ is a general del Pezzo variety of dimension $3$ and degree $2$, so that $X$ is a quartic double solid branched along a quartic K3 containing no lines. Then $\Db(F_1(X))$ is not a semiorthogonal component of $\Db(X^{[2]})$.
	\end{proposition}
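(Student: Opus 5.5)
We argue by contradiction using an additive invariant that is compatible with semiorthogonal decompositions: Hochschild homology, or more concretely the Hodge numbers $h^{p,q}$ organised by $p-q$. If $\Db(F_1(X))$ were an admissible subcategory of $\Db(X^{[2]})$, then there would be a decomposition $\Db(X^{[2]}) = \langle \Db(F_1(X)), \mathcal{A}\rangle$. Additivity of Hochschild homology gives
\[HH_i(X^{[2]}) \simeq HH_i(F_1(X)) \oplus HH_i(\mathcal{A})\]
for every $i$. In particular $\dim HH_i(F_1(X)) \leq \dim HH_i(X^{[2]})$ for all $i$, where $HH_i(M) = \bigoplus_{q-p=i} H^{p,q}(M)$ by HKR. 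The plan is to exhibit one index $i$ at which this inequality fails. The most promising candidate is an extreme index such as $i = \pm 2$ or $\pm 1$, where $X^{[2]}$ has very little cohomology.

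First we compute the Hodge numbers of $X^{[2]}$ from those of $X$. A general quartic double solid has $h^{1,0}=h^{2,0}=h^{3,0}=0$, $h^{1,1}=h^{2,2}=1$ and $h^{1,2}=h^{2,1}=10$. Since $X^{[2]} = \Bl_\Delta(X\times X)/(\bZ/2)$, its rational Hodge structure is that of $\Sym^2 H^*(X)$ together with $H^*(X)(-1)\oplus H^*(X)(-2)$ from the exceptional divisor, a $\bbP^2$-bundle over $X$. Then
\[HH_i(X^{[2]}) = \bigoplus_{q-p=i} h^{p,q}(X^{[2]}).\]
The only odd cohomology of $X$ is $H^3 = H^{2,1}\oplus H^{1,2}$, so classes with $q-p=\pm 1$ come from $H^3(X)\otimes H^{\mathrm{even}}(X)$ and from the exceptional-divisor copies. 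Classes with $q-p=\pm2$ come only from $\wedge^2 H^3(X) \subset \Sym^2$ of the graded space, namely from $\wedge^2 H^{1,2}$ and $\wedge^2 H^{2,1}$. This gives $\dim HH_{\pm 2}(X^{[2]}) = \binom{10}{2} = 45$. One still has to check the sign conventions for odd classes in the graded symmetric square, but these contributions are exactly $\wedge^2$ of a 10-dimensional space. Similarly $HH_{\pm 1}(X^{[2]})$ is a small multiple of $10$: $10\cdot 4$ from the Künneth part and $10\cdot 2$ from the divisor, for a total of $60$ with the appropriate bookkeeping.

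Next we compute the Hodge numbers of the surface $F_1(X)$ from Welters. The Abel--Jacobi map identifies $\on{Alb}(F_1(X))$ with the intermediate Jacobian, so $h^{1,0}(F_1(X)) = 10$. By Welters, $F_1(X)$ is a smooth surface of general type whose invariants include $h^{2,0}=101$ (equivalently $p_g=101$, $q=10$), together with a large $c_2$. For the surface, $HH_{\pm2}(F_1(X)) = h^{2,0}(F_1(X)) = 101$, which exceeds $45 = \dim HH_{\pm 2}(X^{[2]})$. This violates the inequality and gives the contradiction. A backup argument, in case the $HH_{\pm2}$ comparison is miscounted, is to compare $HH_0$ or the Euler characteristic. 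Since $e(F_1(X))$ is large, one could instead use the fact that $e(F_1(X)) \le e(X^{[2]})$ is forced, through the topological Euler characteristic of $HH_\bullet$ being additive with total dimension bounded.

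The main obstacle is extracting the precise Hodge numbers of $F_1(X)$ from \cite{welters}, particularly $p_g(F_1(X))$, and getting the Hodge decomposition of $X^{[2]}$ right. Both sides must be pinned down exactly at the chosen index so that the inequality visibly fails. I would first verify $h^{2,0}(F_1(X))$ via $F_1(X)\subset \on{Alb}$ and the relation $\wedge^2 H^{1,0} \to H^{2,0}$, which already gives $h^{2,0}\ge 45$. I would then rely on Welters' stated value to obtain strict excess over $\dim HH_{2}(X^{[2]})$.
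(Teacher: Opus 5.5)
Your argument is correct. It follows the same strategy as the paper: additivity of Hochschild homology under semiorthogonal decompositions, HKR, Welters' invariants of $F_1(X)$, and $H^*(X^{[2]}) = (H^*(X)^{\otimes 2})^{\bbZ/2}\oplus H^*(X)E\oplus H^*(X)E^2$. The difference is that you test a different Hochschild degree.

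The paper compares $HH_0$. On the $F_1(X)$ side this needs Welters' value $h^{1,1}(F_1(X)) = 220$, giving $222$. On the $X^{[2]}$ side it needs the full middle strip $1,2,4,104,4,2,1$, including the exceptional-divisor classes, giving $118$.

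Your choice of $HH_{\pm 2}$ makes the $X^{[2]}$ side almost trivial. All Hodge classes of $X$ have $|p-q|\le 1$, and the divisor summands preserve $p-q$. So only $H^{2,1}(X)\otimes H^{2,1}(X)$ contributes. Because of the Koszul sign for odd classes, its $\bbZ/2$-invariants are the antisymmetric tensors, which gives $45$ against $p_g(F_1(X)) = 101$.

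Your comparison is also less sensitive to the input. Noether's formula with $K_{F_1(X)}^2 = 720$ and $c_2(F_1(X)) = 384$ gives $\chi(\cO_{F_1(X)}) = 92$. Hence $p_g = 91 + q \geq 91 > 45$, whatever the exact irregularity.

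Two of your side remarks should be dropped or fixed.

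First, the Euler-characteristic backup does not work as stated. Additivity of $\chi(HH_\bullet)$ does not force $e(F_1(X)) \le e(X^{[2]})$, because the complementary component can have negative Euler characteristic. Here it would be $88 - 384$, which is not a contradiction by itself. What additivity does force is $\sum_i \dim HH_i(F_1(X)) \le \sum_i \dim HH_i(X^{[2]})$, an inequality of total Betti numbers. That version would also work, since the totals are $464$ versus $328$.

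Second, the map $\wedge^2 H^{1,0} \to H^{2,0}$ gives no lower bound on $h^{2,0}(F_1(X))$ without an injectivity argument. Even with one, it would only give $\geq 45$, not the strict inequality you need.
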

	\begin{proof}
		We obstruct the existence of this semiorthogonal decomposition by examining additive invariants of the categories $\Db(F_1(X))$ and $\Db(X^{[2]})$. In particular, recall that the Hochschild homology is an additive invariant under semiorthogonal decomposition, so the existence of an embedding $\Db(F_1(X)) \hookrightarrow \Db(X^{[2]})$ would imply an inequality $\dim HH_0(F_1(X)) \leq \dim HH_0(X^{[2]})$. On the other hand, \cite[Cohomological study]{welters} calculates the cohomological invariants \[h^0(F_1(X), \cO) = 1 \text{ and } h^1(F_1(X), \Omega_{F_1(X)}) = 220,\]  while Serre duality gives $h^2(F_1(X), \omega_{F_1(X)}) = 1$. However, the Hochschild--Kostant--Rosenberg theorem implies that $HH_0$ agrees with the sum of the Hodge numbers of type $(p,p)$, so in particular 
		\[\dim HH_0(F_1(X)) = h^0(\cO_{F_1(X)}) + h^1(\Omega_{F_1(X)}) + h^2(\omega_{F_1(X)})= 222.\]  \par 
		To calculate $HH_0(X^{[2]})$, by \cite[Section 0]{welters} we know that the bottom half of the Hodge diamond of $X$ is given by 
		\begin{gather*}
			0 \quad 10 \quad 10 \quad 0 \\
			0 \quad 1 \quad 0 \\
			0 \quad 0 \\
			1
		\end{gather*}
		Since $X^{[2]}$ is the quotient of the blowup $\Bl_{\Delta} (X \times X)$ along the diagonal by the natural involution, by the blowup formula for cohomology, we see that that there is a natural identification of cohomology with complex coefficients given by 
		\begin{equation}
			\begin{aligned}
				H^*(X^{[2]}) &= \left(H^*(X) \otimes H^*(X) \oplus H^*(X)E \oplus H^*(X)E^2\right)_{\bbZ/2} \\
				&= (H^*(X) \otimes H^*(X))_{\bbZ/2} \oplus H^*(X)E \oplus H^*(X)E^2
			\end{aligned}
		\end{equation}
		where $E$ is the class of the exceptional divisor for $\Bl_\Delta X \times X$. As this naturally respects Hodge decompositions, we observe that there are isomorphisms
		\begin{equation}
			\begin{aligned}
				H^{0,0}(X^{[2]}) &= \Sym^2 H^{0,0}(X) \\
				H^{1,1}(X^{[2]}) &= H^{0,0}(X) \otimes H^{1,1}(X) \oplus H^{0,0}(X)\\
				H^{2,2}(X^{[2]}) &= H^{0,0}(X) \otimes H^{2,2}(X) \oplus \Sym^2 H^{1,1}(X) \oplus H^{1,1}(X) \oplus H^{0,0}(X) \\
				H^{3,3}(X^{[2]}) &= H^{0,0}(X) \otimes H^{3,3}(X) \oplus H^{1,1} \otimes H^{2,2} \oplus H^{2,1} \otimes H^{1,2} \oplus H^{2,2}(X) \oplus H^{1,1}(X).
			\end{aligned}
		\end{equation}
		We conclude that the center vertical strip of the Hodge diamond of $X^{[2]}$ is given by 
		\[1\quad 2 \quad 4 \quad 104 \quad 4 \quad 2 \quad 1,\]
		and after applying Hochschild--Kostant--Rosenberg once more find
		\[\dim HH_0(X^{[2]}) = 118.\]
		As 
		\[\dim HH_0(F_1(X)) > \dim HH_0(X^{[2]}),\]
		we conclude that there can be no semiorthogonal decomposition of $X^{[2]}$ including $F_1(X)$ as a component. 
	\end{proof}
	This is the lowest dimension where such an obstruction exists for degree 2 del Pezzo varieties, since a del Pezzo surface $S$ of degree 2 has 56 lines and so $\Db(F_1(S))$ will have 56 exceptional objects, while $\Db(S^{[2]}) \simeq \Sym^2 \Db(S)$ will have $10\cdot 9/2 + 2\cdot 10 = 65$ exceptional objects, coming from symmetrizing the length 10 exceptional collection of $\Db(S)$. \par 
	An interesting line of investigation would be to check whether there is some connection on the level of derived categories between the geometry of $F_1(X)$ and $X^{[2]}$ up to a natural birational modification or \'etale cover. 
	\printbibliography
\end{document}